\gdef\n@te#1#2{\leavevmode\vadjust{%
 {\setbox\z@\hbox to\z@{\strut#1}%
  \setbox\z@\hbox{\raise\dp\strutbox\box\z@}\ht\z@=\z@\dp\z@=\z@%
  #2\box\z@}}}
\gdef\leftnote#1{\n@te{\hss#1\quad}{}}
\gdef\rightnote#1{\n@te{\quad\kern-\leftskip#1\hss}{\moveright\hsize}}
\gdef\?{\FN@\qumark}
\gdef\qumark{\ifx\next"\DN@"##1"{\leftnote{\rm##1}}\else
 \DN@{\leftnote{\rm??}}\fi{\rm??}\next@}}
\DeclareFontFamily{OT1}{wncyr}{\hyphenchar\font45 }
\DeclareFontShape{OT1}{wncyr}{m}{n}{%
   <5> <6> <7> <8> <9> gen * wncyr
   <10> <10.95> <12> <14.4> <17.28> <20.74>  <24.88>wncyr10}{}
\DeclareFontShape{OT1}{wncyr}{m}{it}{%
   <5> <6> <7> <8> <9> gen * wncyi
   <10> <10.95> <12> <14.4> <17.28> <20.74> <24.88> wncyi10}{}
\DeclareFontShape{OT1}{wncyr}{m}{sc}{%
   <5> <6> <7> <8> <9> <10> <10.95> <12> <14.4>
   <17.28> <20.74> <24.88>wncysc10}{}
\DeclareFontShape{OT1}{wncyr}{b}{n}{%
   <5> <6> <7> <8> <9> gen * wncyb
   <10> <10.95> <12> <14.4> <17.28> <20.74> <24.88>wncyb10}{}
\def\rus{\usefont{OT1}{wncyr}{m}{n}\cyracc\fontsize{9}{11pt}\selectfont}
\theoremstyle{plain}
\newtheorem{theorem}{Theorem}
\newtheorem{proposition}{Proposition}
\newtheorem{lemma}{Lemma}
\newtheorem{corollary}{Corollary}
\theoremstyle{definition}
\newtheorem{conjecture}{Conjecture}
\newtheorem{definition}{Definition}
\newtheorem{remark}{Remark}
\newtheorem{nothing*}[theorem]{}
\newtheorem{subnothing*}[sub]{}
\newtheorem{example}{Example}
\newtheorem{question}{Question}
\theoremstyle{remark}
\newcommand{\lb}{{\rm(\hskip -.1mm}}
\newcommand{\rb}{{\hskip .13mm\rm)}}
\newcommand{\dss}{\hskip -2mm\rotatebox{68}{\raisebox{-1.8\height}{\mbox{\normalsize -\hskip .1mm-\hskip .1mm-}}}\hskip -.6mm}
\begin{document}

\title[Jordan groups and automorphism groups of algebraic varieties]
{Jordan groups and\\ automorphism groups of\\ algebraic varieties}

\author[Vladimir  L. Popov]{Vladimir  L. Popov${}^*$}

\thanks{
 ${}^*$\,Supported by
 grants {\rus RFFI
11-01-00185-a}, {\rus N{SH}--5139.2012.1}, and the
program {\it Contemporary Problems of Theoretical
Mathematics} of the Russian Academy of Sciences, Branch
of Mathematics. }

\maketitle

\vskip -10mm

\begin{gather*}
\mbox{\footnotesize Steklov Mathematical Institute, Russian Academy of Sciences}\\[-2mm]
\mbox{\footnotesize Gubkina 8, Moscow 119991, Russia
}\\[-1.5mm]
\mbox{\footnotesize and}\\[-1.5mm]
\mbox{\footnotesize National Research University Higher School of Economics}\\[-2mm]
\mbox{\footnotesize 20, Myasnitskaya Ulitsa, Moscow 101000, Russia}\\
\mbox{\footnotesize popovvl@mi.ras.ru}
\end{gather*}

\begin{abstract} The first section of this paper is focused on Jordan groups in abstract setting, the second on that in the settings of automorphisms groups and groups of birational self-maps
of algebraic varieties.\;The ap\-pen\-dix contains formulations of some open problems and the relevant
comments.

\vskip 2mm

\noindent {\it MSC} 2010: 20E07, 14E07

\noindent {\it Key words}: Jordan, Cremona, automorphism, birational map
\end{abstract}

\vskip 10mm

This is the expanded version of my talk, based on \cite[Sect.\;2]{Po1},
at the workshop {\it Groups of Automorphisms in Birational and Affine Geometry}, October 29--Novem\-ber 3, 2012, Levico Terme, Italy.\;The ap\-pen\-dix is the expanded version of my notes on open problems posted on the site
of this workshop \cite{Po4}.

Below $k$ is an algebraically closed field of characteristic zero.\;Variety
means algebraic variety over $k$  in the sense of Serre
(so algebraic group means algebraic group over $k$).\;We use without explanation standard notation and conventions
of \cite{Bo} and \cite{Sp}.\;In particular,
$k(X)$ denotes the field of rational functions of an irreducible variety
$X$.\;${\rm Bir}(X)$ denotes the group of birational self-maps
of an irreducible variety $X$. Recall that if $X$ is the affine $n$-dimensional space ${\bf A}^n$,  then
${\rm Bir}(X)$ is called
the {\it Cremona group over} $k$
{\it of rank} $n$; we denote it by ${\rm Cr}_n$ (cf.\,\cite{P3}, \cite{P2}).

\vskip 2mm


I am
indebted to
the referee for careful reading and remarks.

\section{Jordan groups}
\subsection{Main definition}  The notion of Jordan group was introduced in \cite{Po1}:
\begin{definition}[{{\rm \cite[Def.\;2.1]{Po1}}}]\label{Jd}
A group $G$ is called a {\it Jordan group}
if there exists a positive integer $d$,
depending on $G$ only,
such that every finite subgroup
$K$ of $G$ contains a normal abelian subgroup whose index in $K$ is at most $d$.\;The minimal such $d$ is called the {\it Jordan constant of} $\,G$ and is denoted by $J_G$.
\end{definition}

Informally, this means that all finite subgroups of $G$ are ``almost'' abelian in the sense that they are extensions of abelian groups by finite groups taken from a finite list.

Actually, one obtains the same class of groups if the assumption of normality in Definition \ref{Jd} is dropped.\;Indeed,
for any group $P$ containing a
subgroup $Q$
of finite index,
there is
a normal subgroup $N$ of $P$
such that $[P:N]\leqslant [P:Q]!$ and $N\subseteq Q$
(see, e.g.,\,\cite[Exer.\,12 to Chap.\,I]{Lang}).

\subsection{Examples}

\subsubsection{Jordan's Theorem}\label{JJJJJ}
The first example that led to Definition\;\ref{Jd} justifies the coined name. It is given by the classical Jordan's theorem \cite{Jo} (see, e.g., \cite[\S36]{CR} for a modern exposition).\;In terms of
Definition\;\ref{Jd} the latter can be re\-for\-mu\-lated  as follows:

\begin{theorem}[{{\rm C.\;Jordan, 1878}}]\label{Jt}
The group $\,{\bf GL}_n(k)$ is Jordan for every\;$n$.
\end{theorem}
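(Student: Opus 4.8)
The plan is to reduce the statement to the classical analytic picture over $\mathbb{C}$ and there exploit the compactness of the unitary group. It suffices to treat one finite subgroup $K\subseteq \mathbf{GL}_n(k)$ at a time and to exhibit a normal abelian subgroup of index at most some $d=d(n)$ independent of $K$. The finitely many matrix entries of the elements of $K$ generate a subfield $k_0\subseteq k$ finitely generated over $\mathbb{Q}$; since $\mathrm{char}\,k=0$ and $\mathbb{C}$ has infinite transcendence degree over $\mathbb{Q}$, such a $k_0$ embeds into $\mathbb{C}$, so I may assume $K\subseteq \mathbf{GL}_n(\mathbb{C})$. Averaging the standard Hermitian form over $K$ yields a $K$-invariant positive definite form, and conjugating by a matrix diagonalizing it places $K$ inside the compact group $\mathrm{U}(n)$. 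Thus everything reduces to finite subgroups of $\mathrm{U}(n)$, equipped with the bi-invariant metric coming from the operator norm $\|\cdot\|$.

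The heart of the matter is the following lemma: there is $\varepsilon=\varepsilon(n)>0$ such that for every finite $K\subseteq\mathrm{U}(n)$ the subgroup $A$ generated by $S:=\{g\in K:\|g-I\|<\varepsilon\}$ is abelian. The mechanism is the commutator contraction estimate: for unitary $g,h$ one has $[g,h]-I=(gh-hg)g^{-1}h^{-1}$, whence $\|[g,h]-I\|\le\|gh-hg\|\le 2\|g-I\|\,\|h-I\|$. Taking $\varepsilon\le 1/4$ makes commutators of elements of $S$ land back in $S$ and contract strictly toward $I$. I would prove the lemma by induction on $n$, the case $n=1$ being trivial as $\mathrm{U}(1)$ is abelian. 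Let $a_0\in A$ be a non-scalar element of minimal distance to $I$ (if $A$ has no non-scalar element it is already abelian). For every $g\in S$ the element $[g,a_0]$ lies in $A$ with $\|[g,a_0]-I\|\le 2\varepsilon\|a_0-I\|<\|a_0-I\|$, so by minimality it cannot be non-scalar; being a commutator it has $\det=1$, hence is a scalar $n$-th root of unity within distance $\varepsilon$ of $I$. Choosing $\varepsilon<2\sin(\pi/n)$ forces $[g,a_0]=I$, so $a_0$ is central in $A$. Since $a_0$ is non-scalar, its eigenspace decomposition $\mathbb{C}^n=\bigoplus_\lambda V_\lambda$ is nontrivial and $A$-invariant; the restriction of each $g\in S$ to $V_\lambda$ satisfies $\|g|_{V_\lambda}-I\|\le\|g-I\|<\varepsilon$, so the inductive hypothesis applies in each $\mathrm{U}(V_\lambda)$ with $\dim V_\lambda<n$ (and the threshold $2\sin(\pi/\dim V_\lambda)$ is larger, so the same $\varepsilon$ serves). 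The faithful embedding $A\hookrightarrow\prod_\lambda A|_{V_\lambda}$ then shows $A$ is abelian.

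Normality of $A$ in $K$ is immediate from bi-invariance: since $\|kgk^{-1}-I\|=\|g-I\|$ for $k\in\mathrm{U}(n)$, conjugation by any $k\in K$ preserves $S$, hence preserves $A=\langle S\rangle$. For the index, note that if $k_1,k_2\in K$ lie in distinct cosets of $A$, then $k_1^{-1}k_2\notin A$, so $k_1^{-1}k_2\notin S$ (as $S\subseteq A$), i.e.\ $\|k_2-k_1\|=\|k_1^{-1}k_2-I\|\ge\varepsilon$. Thus any system of coset representatives is $\varepsilon$-separated in $\mathrm{U}(n)$. By compactness of $\mathrm{U}(n)$ the maximal number of $\varepsilon$-separated points is a finite $d=d(n,\varepsilon)$ depending only on $n$, so $[K:A]\le d$. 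This bound is uniform over all finite $K$ and over all the auxiliary embeddings, which is precisely what Definition \ref{Jd} demands.

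The step I expect to require the most care is the abelian lemma. Commutator contraction by itself only yields nilpotence; the passage to genuine commutativity is what forces the eigenspace induction together with the determinant-one argument excluding small central (scalar) commutators, and the delicate point is arranging a single constant $\varepsilon(n)$ that works simultaneously at every stage of that induction.
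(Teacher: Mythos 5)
Your argument is correct in substance, but note first how it relates to the paper: the paper gives no proof of Theorem~\ref{Jt} at all --- it states the theorem as Jordan's classical result \cite{Jo}, pointing to \cite[\S 36]{CR} for a modern exposition and to Collins \cite{Co} for the exact values of $J_{{\bf GL}_n(k)}$. What you have written out is the classical analytic proof (going back to Frobenius and Bieberbach): reduction to $\mathbb{C}$ by embedding the finitely generated field of matrix entries, unitarization by averaging a Hermitian form, the key lemma that elements of a finite subgroup of ${\rm U}(n)$ near the identity generate an abelian group (via the commutator contraction estimate, a minimality argument, and induction on eigenspace decompositions), normality of that subgroup from bi-invariance of the operator-norm metric, and the index bound from a packing argument in the compact group ${\rm U}(n)$. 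This route buys a self-contained, classification-free proof that some constant $d(n)$ works; what it gives up is any reasonable control of $d(n)$: the packing bound is astronomically far from the sharp values recorded in the paper (e.g.\ $J_{{\bf GL}_n(k)}=(n+1)!$ for $n\geqslant 71$), which rest on the classification of finite simple groups.

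Two constants in your abelian lemma need repair, though the repair is routine. From $\|[g,a_0]-I\|\leqslant 2\|g-I\|\,\|a_0-I\|$ and $\|a_0-I\|\leqslant 2$ you only get that the scalar commutator lies within $4\varepsilon$ of $I$, not within $\varepsilon$ as you assert; consequently the threshold $\varepsilon<2\sin(\pi/n)$ does not by itself force $[g,a_0]=I$. Taking instead $\varepsilon(n)=\min\bigl\{1/4,\tfrac{1}{2}\sin(\pi/n)\bigr\}$ gives $|\lambda-1|\leqslant 2\varepsilon\|a_0-I\|<2\sin(\pi/n)$, which does force $\lambda=1$. Likewise, in the eigenspace induction your remark that the threshold ``$2\sin(\pi/\dim V_\lambda)$ is larger'' fails when $\dim V_\lambda=1$ (since $\sin\pi=0$); that case must be disposed of by the trivial observation that ${\rm U}(1)$ is abelian, after which the needed monotonicity $\varepsilon(n)\leqslant\varepsilon(m)$ for $2\leqslant m<n$ does hold. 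With these adjustments your induction closes, and the $\varepsilon$-separation plus compactness argument yields $[K:A]\leqslant d(n)$ uniformly over all finite $K$ and all auxiliary embeddings, exactly as Definition~\ref{Jd} requires.
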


 Since the symmetric group ${\rm Sym}_{n+1}$ admits a faithful $n$-dimensional representation and the alternating group ${\rm Alt}_{n+1}$ is the only non-identity proper normal subgroup of ${\rm Sym}_{n+1}$ for
 $n\geqslant 2$, $n\neq 3$,
Definition \ref{Jd} yields the lower bound
\begin{equation}\label{J}
(n+1)!\leqslant J_{{\bf GL}_n(k)}\quad \mbox{for $n\geqslant 4$}.
\end{equation}

Frobenius, Schur, and Blichfeldt initiated exploration of the upper bounds for  $J_{{\bf GL}_n(k)}$.\;In 2007, using the classification of finite
simple groups, M.\,J.\,Col\-lins \cite{Co} gave optimal upper bounds  and thereby found the
precise values of
$J_{{\bf GL}_n(k)}$ for all $n$.\;In particular, in
\cite{Co} is proved that

\begin{enumerate}[\hskip 2.2mm\rm(i)]
\item the equality in \eqref{J} holds
for all $n\geqslant 71$ and $n=63, 65, 67, 69$;
\item $J_{{\bf GL}_n(k)}=60^rr!$ if $n=2r$ or $2r+1$ and either $20\leqslant n \leqslant 62$ or $n=64, 66, 68, 70$;
    \item $J_{{\bf GL}_n(k)}=60, 360, 25920, 25920, 6531840$\;\;resp., for\;\,$n=2, 3, 4, 5, 6$.
\end{enumerate}
The values of $J_{{\bf GL}_n(k)}$ for $7 \leqslant n\leqslant 19$ see in \cite{Co}.

\subsubsection{Affine algebraic groups}

Since any subgroup of a Jordan groups is Jordan,
Theorem \ref{Jt} yields
\begin{corollary}  Every linear group is Jordan.
\end{corollary}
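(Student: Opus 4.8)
The plan is to reduce the statement directly to Jordan's theorem. First I would fix what ``linear group'' means in the present setting: a group $G$ is linear precisely when it is isomorphic to a subgroup of ${\bf GL}_n(k)$ for some positive integer $n$. Thus it suffices to show that every subgroup of ${\bf GL}_n(k)$ is Jordan, and for this I would invoke Theorem \ref{Jt}, which asserts that ${\bf GL}_n(k)$ itself is Jordan.

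The one point requiring explicit verification is the hereditary nature of the Jordan property, namely that every subgroup $H$ of a Jordan group $G$ is again Jordan. This is immediate from Definition \ref{Jd}: if $d$ is a Jordan constant for $G$, then any finite subgroup $K$ of $H$ is in particular a finite subgroup of $G$, so $K$ contains a normal abelian subgroup of index at most $d$; hence $H$ is Jordan and $J_H\leqslant d=J_G$. No dependence on $H$ beyond its membership in $G$ is introduced, so the same bound $d$ works uniformly across all subgroups.

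Combining these observations finishes the argument: given a linear group $G$, choose an embedding $G\hookrightarrow {\bf GL}_n(k)$, note that ${\bf GL}_n(k)$ is Jordan by Theorem \ref{Jt}, and conclude by the hereditary property that $G$ is Jordan. I do not anticipate a genuine obstacle here; the entire content of the corollary is carried by Jordan's theorem, and the only care needed is the trivial check that the defining bound $d$ in Definition \ref{Jd} is inherited upon passing to subgroups.
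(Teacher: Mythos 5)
Your proof is correct and matches the paper's argument exactly: the paper likewise derives the corollary from Theorem \ref{Jt} together with the observation that any subgroup of a Jordan group is Jordan (with the same Jordan constant bound), which is immediate from Definition \ref{Jd}. Nothing further is needed.
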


Since every affine algebraic group
is linear
\cite[2.3.7]{Sp}, this, in turn, yields the following generalization of Theorem \ref{Jt}:
\begin{theorem}\label{lg}
 Every affine algebraic group is Jordan.
\end{theorem}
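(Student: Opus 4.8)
The plan is to deduce Theorem \ref{lg} from the preceding Corollary together with one piece of structure theory: the fact that every affine algebraic group admits a faithful finite-dimensional representation. In other words, I would not reprove any Jordan-type bound from scratch; all of that is already packaged in Jordan's Theorem \ref{Jt}. The two ingredients I would combine are (a) that being Jordan is inherited by arbitrary subgroups, which is exactly what turns Theorem \ref{Jt} into the Corollary, and (b) the linearity of affine algebraic groups, cited as \cite[2.3.7]{Sp}.

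First I would make explicit why the Jordan property descends to subgroups, since this underlies the Corollary. If $H$ is a subgroup of a Jordan group $G$ with Jordan constant $J_G$, then every finite subgroup $K$ of $H$ is in particular a finite subgroup of $G$, so by Definition \ref{Jd} it contains a normal abelian subgroup of index at most $J_G$; hence the same integer $d=J_G$ witnesses that $H$ is Jordan, and $J_H\leqslant J_G$. Applying this with $G={\bf GL}_n(k)$, which is Jordan by Theorem \ref{Jt}, gives precisely the Corollary: every subgroup of some ${\bf GL}_n(k)$, i.e. every linear group, is Jordan.

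Next I would invoke the structural input. By \cite[2.3.7]{Sp}, an arbitrary affine algebraic group $G$ is linear: there is a closed embedding $G\hookrightarrow {\bf GL}_n(k)$ for a suitable $n$, so in particular $G$ is, as an abstract group, isomorphic to a subgroup of ${\bf GL}_n(k)$. The Corollary then applies to this subgroup and shows $G$ is Jordan, with the quantitative bound $J_G\leqslant J_{{\bf GL}_n(k)}$, completing the proof.

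As for the main obstacle: honestly, there is essentially none \emph{internal} to this argument. The entire mathematical weight is carried by the two cited facts, Jordan's Theorem \ref{Jt} and the linearity statement \cite[2.3.7]{Sp}; within the proof itself the only thing one must check is the (immediate) observation that the Jordan condition passes to subgroups. So the ``hard part'' is entirely offloaded to the inputs, and the theorem is a short formal consequence of them.
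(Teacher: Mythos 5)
Your proof is correct and follows exactly the paper's own route: the Jordan property passes to subgroups (giving the Corollary that every linear group is Jordan from Theorem \ref{Jt}), and every affine algebraic group is linear by \cite[2.3.7]{Sp}. Nothing is missing; this is precisely the argument the paper gives.
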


\subsubsection{Nonlinear Jordan groups}

Are there nonlinear Jordan groups?\;The next example, together with Theorem \ref{Jd}, convinced me that Definition\;\ref{Jd}
singles out an interesting class of groups
and therefore
deserves to be introduced.

\begin{example}\label{Cr2} By \cite[Thm.\;5.3]{Se1}, \cite[Thm.\;3.1]{Se2},
 the planar Cremona group ${\rm Cr}_2$ is Jordan.\;On the other hand, by  \cite[Prop.\;5.1]{CD} (see also \cite[Prop.\;2.2]{Co}), ${\rm Cr}_2$ is not linear.\;Note
that  in \cite[Thm.\;5.3]{Se1} one also
finds a ``multiplicative'' upper bound for $J_{{\rm Cr}_2}$: as is
specified
there,
 a crude computation shows that every finite subgroup $G$ of   ${\rm Cr}_2$ contains a normal abelian subgroup $A$ of rank $\leqslant 2$
 with $[G:A]$
  dividing $2^{10}\cdot 3^4\cdot 5^2\cdot 7$ (it is also mentioned that the exponents of $2$ and $3$ can be somewhat lowered, but those of $5$ and $7$ cannot).
\quad $\square$ \end{example}

\begin{example}\label{Ad} Let $F_d$ be a free group with $d$ free generators and let $F_d^n$ be its normal subgroup generated by the $n$th powers of all elements. As is known (see, e.g.,\;\cite[Thm.\;2]{Ad1}), the group $B(d, n):=F_d/F_d^n$ is  infinite for $d\geqslant 2$ and odd $n\geqslant 665$
(recently S.\,Adian announced in \cite{Ad2}
that $665$ may be replaced by $100$).\;On the other hand, by I.\;Schur, finitely generated linear torsion groups are finite (see, e.g.,\;\cite[Thm.\;36.2]{CR}).\;Hence
infinite $B(d, n)$ is nonlinear.\;On the other hand, for $d\geqslant 2$ and odd $n\geqslant 665$,
every finite subgroup in
$B(d, n)$ is cyclic (see \cite[Thm.\;8]{Ad1}); hence  $B(d, n)$ is Jordan and $J_{B(d, n)}=1$.
\quad $\square$ \end{example}

\begin{example}  Let $p$ be a positive prime integer and let
$T(p)$ be a Tarski monster group, i.e., an infinite group, such that every its proper subgroup is a cyclic group of order $p$. By \cite{Ol83},  for big $p$ (e.g., $\geqslant 10^{75}$),
such a group exists. $T(p)$ is necessarily simple and finitely generated (and, in fact, generated by every two non-commuting elements). By the same reason as in Example \ref{Ad}, $T(p)$ is not linear. The definitions imply that $T_p$ is Jordan and $J_{T(p)}=1$.
(I thank A.\;Yu.\;Ol'shanski\v i who drew  in \cite{Ol13} my attention to this example.)
\quad $\square$ \end{example}

\subsubsection{Diffeomorphism groups of smooth topological manifolds}\label{manif} Let $M$ be a compact connected $n$-dimensional smooth topological manifold.\;Assume that $M$ admits an unramified covering ${\widetilde M}\to M$ such that $H^1({\widetilde M}, \bf Z)$ contains the cohomolo\-gy classes $\alpha_1,\ldots, \alpha_n$  satisfying $\alpha_1\cup\cdots\cup\alpha_n\neq 0$.\;Then, by \cite[Thm.\;1.4(1)]{MR}, the gro\-up ${\rm Diff}(M)$ is Jordan.\;This result  is
applicable to ${\bf T}^n$, the product of $n$ circles, and, more generally, to the connected sum $N\sharp{\bf T}^n$, where $N$ is any compact connected orientable smooth topological manifold.\;(I thank  I.\;Mundet i Riera who
drew in \cite{MR13} my attention to
\cite{MR}, \cite{Fi} and \cite{Pu}.)

\subsubsection{Non-Jordan groups}

Are there non-Jordan groups?

\begin{example}\label{nJ1}
The group
${\rm Sym}_\infty$
of all permutations of $\mathbf Z$ contains  the alternating group ${\rm Alt}_n$ for every $n$. Hence ${\rm Sym}_\infty$ is non-Jordan
 because ${\rm Alt}_n$ is simple
for $n\geqslant 5$ and $|{\rm Alt}_n|=n!/2\xrightarrow[n\to\infty]{\ }\infty$.
\quad $\square$ \end{example}

Using
Example \ref{nJ1} one obtains a finitely generated non-Jordan group:

\begin{example}\label{nJ2} Let $\mathcal N$ be the subgroup of ${\rm Sym}_\infty$ generated by the transposi\-ti\-on $\sigma:=(1,2)$ and the ``translation'' $\delta$ defined by the condition
\begin{equation*}
\delta(i)=i+1\quad \mbox{for every $i\in \mathbf Z$}.
\end{equation*}
Then $\delta^m\sigma\delta^{-m}$ is the transposition $(m+1, m+2)$ for every $m$.
Since the set of transpositions $(1,2), (2,3),\ldots, (n-1, n)$ generates the symmetric group ${\rm Sym}_n$, this shows that $\mathcal N$ contains ${\rm Alt}_n$ for every $n$; whence $\mathcal N$ is non-Jordan.
\quad $\square$ \end{example}

One can show that $\mathcal N$ is not finitely presented. Here is an example of a finitely presented  non-Jordan group which is also simple.


\begin{example} Consider Richard J. Thompson's group $V$, see \cite[\S{6}]{CFP}.
It is finitely presented, simple and
contains a subgroup isomorphic with
${\rm Sym}_n$ for every $n\geqslant 2$.
The latter implies,
as  in
Example \ref{nJ1}, that
$V$ is non-Jordan.
(I thank Vic.\;Kulikov who drew my attention to this example.) \quad $\square$
\end{example}

\subsection{General properties}

\subsubsection{Subgroups, quotient groups, and products}
Exploring whether a group is Jordan or not leads to
the questions on
the connections between
Jordaness of a group, its subgroup, and its quotient group.

\begin{theorem}[{{\rm \cite[Lemmas 2.6, 2.7, 2.8]{Po1}}}] \label{gs}
\

\begin{enumerate}[\hskip 4.2mm \rm (1)]
\item Let $H$ be a subgroup of a group $G$.
\begin{enumerate}[\hskip -.9mm \rm (i)]
\item If $\,G$ is Jordan, then $H$ is Jordan and $J_H\leqslant J_G$.
\item If $\,G$ is Jordan and $H$ is normal in $\,G$, then
$G/H$ is Jordan and $J_{G/H}\leqslant J_G$ in either of the cases:
\begin{enumerate}[\hskip -.4mm \rm (a)]
\item $H$ is finite;
\item the extension $1\to H\to G\to G/H\to 1$ splits.
\end{enumerate}
\item If  $H$ is torsion-free, normal in $G$, and $\,G/H$ is Jordan,  then $\,G$ is Jordan and
$J_{G}\leqslant J_{G\!/\!H}$.
\end{enumerate}
\item Let $G_1$ and $G_2$ be two groups.\;Then $G_1\times G_2$ is Jordan if and only if $G_1$ and $G_2$ are.\;In this case, $J_{G_i}\leqslant J_{G_1\times G_2}\leqslant J_{G_1}J_{G_2}$ for every $i$.
\end{enumerate}
\end{theorem}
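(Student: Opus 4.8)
The plan is to treat the statements in order of increasing difficulty, reusing earlier items as building blocks. Statement (1)(i) is essentially a tautology: if $G$ is Jordan with Jordan constant $J_G$, then any finite $K\leqslant H$ is in particular a finite subgroup of $G$, so it already contains a normal abelian subgroup of index at most $J_G$; hence $H$ is Jordan and $J_H\leqslant J_G$. Statement (1)(ii)(b) then reduces immediately to (i): a splitting of $1\to H\to G\to G/H\to 1$ exhibits $G/H$ as isomorphic to a subgroup of $G$, so $J_{G/H}\leqslant J_G$ follows from (i).

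For (1)(ii)(a) I would argue with preimages. Given a finite subgroup $\bar K\leqslant G/H$, its full preimage $K=\pi^{-1}(\bar K)$ under $\pi\colon G\to G/H$ fits into $1\to H\to K\to\bar K\to 1$; since $H$ and $\bar K$ are finite, $K$ is finite. Applying Jordaness of $G$ to $K$ yields a normal abelian $A\trianglelefteq K$ with $[K:A]\leqslant J_G$. I would then push $A$ forward: $\pi(A)=AH/H$ is abelian and normal in $\bar K=\pi(K)$, and $[\bar K:\pi(A)]=[K:AH]\leqslant[K:A]\leqslant J_G$, giving $J_{G/H}\leqslant J_G$.

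The first genuinely new idea enters in (1)(iii). The key observation is that if $H$ is torsion-free and $K\leqslant G$ is finite, then $K\cap H$ is a finite torsion-free group, hence trivial; consequently $\pi|_K\colon K\to G/H$ is injective. Thus every finite subgroup of $G$ maps isomorphically onto a finite subgroup $\pi(K)$ of the Jordan group $G/H$, which contains a normal abelian $\bar A$ of index at most $J_{G/H}$. Pulling $\bar A$ back along the isomorphism $K\xrightarrow{\ \sim\ }\pi(K)$ produces a normal abelian subgroup of $K$ of the same index, so $G$ is Jordan with $J_G\leqslant J_{G/H}$.

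Finally, for (2) the ``only if'' direction and the left inequality follow from (i), since $G_1$ and $G_2$ are subgroups of $G_1\times G_2$. For the ``if'' direction I expect the product bound $J_{G_1\times G_2}\leqslant J_{G_1}J_{G_2}$ to be the main obstacle, and I would handle it as follows. Given a finite $K\leqslant G_1\times G_2$ with coordinate projections $\pi_i$, each $\pi_i(K)$ is finite, hence contains a normal abelian $A_i$ with $[\pi_i(K):A_i]\leqslant J_{G_i}$. Since $A_i\trianglelefteq\pi_i(K)$, the product $A_1\times A_2$ is abelian and normalized by $\pi_1(K)\times\pi_2(K)\supseteq K$, so $B:=K\cap(A_1\times A_2)$ is a normal abelian subgroup of $K$. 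The delicate point is the index estimate: by the second isomorphism theorem $K/B$ embeds into $(\pi_1(K)\times\pi_2(K))/(A_1\times A_2)\cong(\pi_1(K)/A_1)\times(\pi_2(K)/A_2)$, whence $[K:B]\leqslant[\pi_1(K):A_1]\,[\pi_2(K):A_2]\leqslant J_{G_1}J_{G_2}$. This yields the desired upper bound and completes the proof.
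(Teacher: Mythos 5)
Your proposal is correct and takes essentially the same route as the paper's proof in every part: the same tautological argument for (1)(i), the same preimage-and-pushforward argument for (1)(ii)(a), the same reduction of (1)(ii)(b) to (1)(i) via the splitting, and the same triviality of $K\cap H$ for (1)(iii). In (2), your subgroup $B=K\cap(A_1\times A_2)$ is literally the same subgroup as the paper's $\widetilde A_1\cap\widetilde A_2=\pi_1^{-1}(A_1)\cap\pi_2^{-1}(A_2)\cap F$, and your second-isomorphism-theorem index bound is the same computation as the paper's embedding of $F/A$ into $F/\widetilde A_1\times F/\widetilde A_2$ via the kernel of the diagonal map.
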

\begin{proof} (1)(i). This follows from Definition \ref{Jd}.

\vskip .4mm

If $H$ is normal in $G$, let $\pi\colon G\to G/H$ be the natural projection.
\vskip .4mm

(1)(ii)(a).  Let $F$ be a finite subgroup of $G/H$. Since $H$ is finite,
$\pi^{-1}(F)$ is finite. Since $G$ is Jordan, $\pi^{-1}(F)$ contains a normal abelian subgroup $A$ whose index is at most $J^{}_G$. Hence $\pi(A)$ is a normal abelian subgroup of $F$ whose index in $F$ is at most\;$J_G$.

\vskip .4mm

(1)(ii)(b). By the condition, there is a subgroup $S$ in $G$ such that
$\pi|_S: S\to G/H$ is an isomorphism; whence the claim by (1)(i).

\vskip .4mm

(1)(iii).
 Let $F$ be a finite subgroup of $G$. Since $H$ is torsion free, $F\cap H=\{1\}$; whence $\pi|_F\colon S\to \pi(F)$ is an isomorphism.\;Therefore, as $G/H$ is Jordan, $F$ contains a normal abelian subgroup whose index in $F$ is at most $J_{G\!/\!H}$.

\vskip .4mm

(2) If $G:=G_1\times G_2$ is Jordan, then (1)(i) implies that $G_1$ and $G_2$ are Jordan and $J_{G_i}\leqslant J_{G}$ for every $i$. Conversely, let $G_1$ and $G_2$ be Jordan. Let $\pi^{}_i\colon G\to G_i$ be the natural projection. Take a finite subgroup $F$ of $G$. Then $F^{}_i:=\pi^{}_i(F)$ contains an abelian normal subgroup $A^{}_i$ such that
\begin{equation}\label{ne}
[F^{}_i:A^{}_i]\leqslant J^{}_{G_i}.
\end{equation}
The subgroup ${\widetilde A}^{}_i:=\pi^{-1}_i(A^{}_i)\cap F$ is  normal in $F$ and $F/{\widetilde A}^{}_i$ is isomorphic
to $F^{}_i/A^{}_i$. From \eqref{ne} we then conclude that
\begin{equation}\label{nee}
[F:{\widetilde A}^{}_i]\leqslant J^{}_{G_i}.
\end{equation}

Since $A:={\widetilde A}^{}_1 \cap {\widetilde A}^{}_2$ is the kernel of the diagonal homomorphism
$$
F\longrightarrow
F/{\widetilde A}^{}_1\times F/{\widetilde A}^{}_2
$$
determined by the canonical projection $F\to F/{\widetilde A}^{}_i$,  we infer from \eqref{nee} that
\begin{equation}\label{neee}
[F:A]=|F/A|\leqslant |
F/{\widetilde A}^{}_1\times F/{\widetilde A}^{}_2|=|F_1/A_1||F_2/A_2|\leqslant J^{}_{G_1}J^{}_{G_2} .
\end{equation}
By construction, $A\subseteq A_1\times A_2$ and $A_i$ is abelian. Hence $A$ is abelian as well. Since $A$ is normal in $F$, the claim then follows from \eqref{neee}.
\quad $\square$ \end{proof}

\begin{theorem}\label{simple}
Let $H$ be a normal subgroup of a group $G$. If $H$ and $G/H$ are Jordan,
then any set of pairwise nonisomorphic simple nonabelian finite subgroups of $G$ is finite.
\end{theorem}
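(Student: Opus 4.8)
The plan is to reduce everything to a single order bound on the simple nonabelian finite subgroups of a Jordan group. First I would establish the following key observation: if $\Gamma$ is any Jordan group, then every simple nonabelian finite subgroup $S$ of $\Gamma$ satisfies $|S|\leqslant J_\Gamma$. Indeed, by Definition \ref{Jd} such an $S$ contains a normal abelian subgroup $A$ with $[S:A]\leqslant J_\Gamma$; since $S$ is simple, its only normal subgroups are $\{1\}$ and $S$, and since $S$ is nonabelian while $A$ is abelian, we must have $A=\{1\}$, whence $|S|=[S:A]\leqslant J_\Gamma$. Thus in a Jordan group simplicity together with nonabelianness upgrades the bounded-index condition into an absolute bound on the order.

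Next I would exploit the normality of $H$ to split each simple nonabelian finite subgroup $S$ of $G$ according to its intersection with $H$; note that $G$ itself is not assumed Jordan, so the observation must be applied to $H$ and $G/H$ rather than to $G$. Let $\pi\colon G\to G/H$ be the natural projection. Since $H$ is normal in $G$, the subgroup $S\cap H$ is normal in $S$, so by simplicity of $S$ it is either $\{1\}$ or all of $S$. In the first case $\pi|_S$ is injective, so $\pi(S)\cong S$ is a simple nonabelian finite subgroup of the Jordan group $G/H$, and the observation gives $|S|\leqslant J_{G/H}$. In the second case $S\subseteq H$, so $S$ is a simple nonabelian finite subgroup of the Jordan group $H$, and the observation gives $|S|\leqslant J_H$.

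Combining the two cases, every simple nonabelian finite subgroup of $G$ has order at most $\max\{J_H, J_{G/H}\}$. Since there are only finitely many isomorphism classes of finite groups whose order is bounded by a fixed integer, the set of pairwise nonisomorphic simple nonabelian finite subgroups of $G$ is finite, as asserted.

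As for the main obstacle, there is no genuine computational difficulty here: the single nontrivial idea is recognizing that simplicity and nonabelianness force the Jordan-guaranteed abelian normal subgroup to be trivial, thereby turning a relative index bound into an absolute order bound. The only point where the hypothesis is essential is the dichotomy $S\cap H=\{1\}$ or $S\cap H=S$, which requires $H$ to be normal so that $S\cap H$ is normal in $S$.
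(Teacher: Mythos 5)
Your proof is correct and follows essentially the same route as the paper: the dichotomy $S\cap H=\{1\}$ or $S\subseteq H$ coming from normality of $H$ and simplicity of $S$, combined with the fact that in a Jordan group the guaranteed normal abelian subgroup of a simple nonabelian finite subgroup must be trivial, forcing an absolute order bound. The paper leaves the order-bound step implicit ("Definition 1 implies\dots"), whereas you spell it out explicitly; otherwise the arguments coincide.
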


\begin{proof} Since up to isomorphism there are only finitely many finite groups of a fixed order,
Definition \ref{Jd} implies that any set of
pairwise nonisomorphic simple nonabelian finite subgroups of
a given Jordan group is finite. This implies the claim because simplicity of a finite subgroup $S$ of $G$ yields that either $S\subseteq H$ or
the canonical projection $G\to G/H$ embeds $S$ in $G/H$.
\quad $\square$\end{proof}

\subsubsection{Counterexample}

For a normal subgroup $H$ of $G$, it is not true, in general, that $G$ is Jordan if $H$ and $G/H$ are.
\begin{example}\label{Zar}
For every integer $n\!>\!0$ fix a finite group $G_n$ with the properties:
\begin{enumerate}[\hskip 4.2mm \rm (i)]
\item $G_n$ has an abelian normal subgroup $H_n$ such that $G_n/H_n$ is abelian;
\item there is a subgroup $Q_n$ of  $G_n$ such that the index in $Q_n$ of every abelian subgroup of $Q_n$
is greater or equal than $n$.
\end{enumerate}
Such a $G_n$ exists, see below. Now take
$G:={
\prod}_{n}G_n$ and $H:=\prod_n H_n$.\;Then
$H$ and $G/H$ are abelian by (i), hence Jordan, but $G$ is not Jordan by (ii).

The following construction from \cite[Sect.\,3]{Z} proves the existence of such a $G_n$.\;Let $K$ be a finite commutative group of order $n$ written additively and let $\widehat K:={\rm Hom}(K, k^\ast)$ be the group of characters of $K$ written multiplicatively. The formula
\begin{equation}\label{grla}
(\alpha, g, \ell)(\alpha', g', \ell'):= (\alpha\alpha'\ell'(g), g+g', \ell\ell')
\end{equation}
endows the set $k^\ast\times K\times \widehat K$ with the group structure.\;Denote by $G_K$ the obtained group. It is embedded in the exact sequence of groups
\begin{equation*}
\begin{gathered}
\{1\}\to k^\ast\xrightarrow{\iota} G_K\xrightarrow{\pi}K\times\widehat K\to \{(0,1)\},\\[-.5mm]
\mbox{where\;\;
$\iota(\alpha):=(\alpha, 0, 1)$\;\; and \;\;$\pi((\alpha, g, \ell)):=(g, \ell)$.}
\end{gathered}
\end{equation*}
Thus, if one takes $G_n:=G_K$ and $H_n:=\iota(k^\ast)$, then property (i) holds.\;Let $\mu_n$ be the subgroup of all $n$th roots of unity in $k^\ast$.\;From \eqref{grla} and $|K|=n$ we infer that the subset $Q_K:=\mu_n\times K\times \widehat K$ is a subgroup of $G_K$.\;In \cite[Sect.\,3]{Z} is proved that for $Q_n=Q_K$ property (ii) holds.
\quad $\square$ \end{example}

\subsubsection{Bounded groups}

However, under certain conditions, $G$ is Jordan if and only if $H$ and $G/H$ are.\;An example
of such a condition is given in Theorem \ref{JJJJ} below; it is based on Definition \ref{b} below introduced in \cite{Po1}.

Given a group $G$, put
\begin{equation*}
b_G:=\underset{F}{\rm sup}\,|F|,
\end{equation*}
where $F$ runs over all finite subgroups of $G$.

\begin{definition}[{{\rm \cite[Def.\;2.9]{Po1}}}]\label{b} A group $G$ is called {\it bounded} if
$b_G\neq \infty$.
\end{definition}

\begin{example} \label{bbbb}
Finite groups and torsion free groups are bounded.
\quad $\square$ \end{example}

\begin{example}\label{ebb}
 It is immediate from Definition \ref{b} that every extension of a bounded group by bounded is bounded.
\quad $\square$ \end{example}

\begin{example}\label{bou} By the classical Minkowski's theorem
${\bf GL}_n({\bf Z})$
is bounded (see,\;e.g.,
\cite[Thm.\;39.4]{Hu}).\;Since every finite subgroup of ${\bf GL}_n({\bf Q})$ is conjugate to a subgroup of ${\bf GL}_n(\bf Z)$ (see,\;e.g.,\,\cite[Thm.\;73.5]{CR}), this implies that
${\bf GL}_n({\bf Q})$ is bounded and
$b_{{\bf GL}_n(\bf Q)}=b_{{\bf GL}_n(\bf Z)}$.\;H.\;Minkowski
 and I.\;Schur obtained the following upper bound for $b_{{\bf GL}_n(\bf Z)}$,
see, e.g., \cite[\S39]{Hu}.\;Let ${\mathcal P}(n)$ be the set of all primes $p\in \bf N$ such that $[n/(p-1)]>0$. Then
\begin{equation}\label{Mi}
\qquad b_{{\bf GL}_n(\bf Z)}\leqslant \prod_{p\in {\mathcal P}(n)}\hskip -1.5mm p^{d_p}, \quad\mbox{where}\quad
d_p=\sum_{i=0}^{\infty}\biggl[\frac{n}{p^i(p-1)}\biggr].
\end{equation}
In particular, the right-hand side of the inequality in \eqref{Mi} is
\begin{equation*}
2, 24, 48, 5760, 11520, 2903040\quad\mbox{resp., for $n=1, 2, 3, 4, 5, 6$}.\qquad \square
\end{equation*}
\end{example}

\begin{example} Maintain the notation and assumption of Subsection \ref{manif}.\;If $\chi(M)\neq 0$, then by
\cite[Thm.\;1.4(2)]{MR}, the group ${\rm Diff}(M)$ is bounded.
Further information on smooth manifolds
with bounded diffeomorphism groups is contained in \cite{Pu}.
\quad $\square$\end{example}

\begin{example} Every bounded group $G$ is Jordan with $J_G\leqslant b_G$, and there are non-bounded Jordan groups (e.g., ${\bf GL}_n(k)$).
\quad $\square$ \end{example}

\begin{theorem}[{{\rm \cite[Lemma\;2.11]{Po1}}}]\label{JJJJ} Let $\,H$ be a normal subgroup of a
group \,$G$ such that $\,G/H$ is bounded.\;Then $\,G$ is Jordan if and only if $\,H$ is Jordan, and in this case $$J_G\leqslant b_{G/H}J_{H}^{b_{G/H}}.$$
\end{theorem}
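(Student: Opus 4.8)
The plan is to prove both directions of the equivalence, with the forward direction being essentially immediate and the reverse direction carrying the quantitative content.

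For the easy direction, suppose $G$ is Jordan. Since $H$ is a subgroup of $G$, Theorem \ref{gs}(1)(i) immediately gives that $H$ is Jordan. So the work lies in the converse: assuming $H$ is Jordan (with Jordan constant $J_H$) and $G/H$ is bounded (with bound $b_{G/H}$), I want to show $G$ is Jordan and establish the bound $J_G \leqslant b_{G/H} J_H^{\,b_{G/H}}$.

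First I would take an arbitrary finite subgroup $F$ of $G$ and let $\pi\colon G \to G/H$ be the natural projection. The key observation is that $F \cap H$ is a finite subgroup of the Jordan group $H$, so it contains a normal abelian subgroup $B$ with $[F\cap H : B] \leqslant J_H$. The difficulty is that $B$ need not be normal in all of $F$, only in $F \cap H$. To fix this, I would pass to the intersection of the $F$-conjugates of $B$, namely $A := \bigcap_{f \in F} fBf^{-1}$. This $A$ is abelian (a subgroup of $B$), normal in $F$ by construction, and contained in $F \cap H$ (since $F \cap H$ is normal in $F$, each conjugate $fBf^{-1}$ lies in $F \cap H$). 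The main obstacle is controlling the index $[F\cap H : A]$: the number of distinct conjugates $fBf^{-1}$ is at most $[F : F\cap H] = |\pi(F)| \leqslant b_{G/H}$, and each conjugate has index at most $J_H$ in $F \cap H$; since an intersection of $m$ subgroups of index at most $j$ has index at most $j^m$, I would conclude $[F \cap H : A] \leqslant J_H^{\,b_{G/H}}$.

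Finally I would assemble the index bound on all of $F$. Writing $[F : A] = [F : F\cap H]\cdot[F\cap H : A]$, the first factor equals $|\pi(F)| \leqslant b_{G/H}$ and the second is at most $J_H^{\,b_{G/H}}$ by the previous step, so
\begin{equation*}
[F : A] \leqslant b_{G/H}\, J_H^{\,b_{G/H}}.
\end{equation*}
Since $A$ is a normal abelian subgroup of $F$ of index at most the $F$-independent quantity $b_{G/H} J_H^{\,b_{G/H}}$, Definition \ref{Jd} shows that $G$ is Jordan with $J_G \leqslant b_{G/H} J_H^{\,b_{G/H}}$. The crux of the argument is the coring-down step producing the genuinely $F$-normal abelian subgroup $A$ together with the counting estimate on the number of conjugates, which is exactly where the boundedness hypothesis on $G/H$ is used.
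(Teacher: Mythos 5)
Your proof is correct and follows essentially the same route as the paper's: set $L=F\cap H$, take a normal abelian subgroup $B$ of $L$ with $[L:B]\leqslant J_H$, intersect its $F$-conjugates (which number at most $[F:L]=|\pi(F)|\leqslant b_{G/H}$ since $B$ is normal in $L$) to get an abelian subgroup normal in $F$, and multiply the index bounds. The only cosmetic difference is that you invoke the standard fact that an intersection of $m$ subgroups of index at most $j$ has index at most $j^m$, where the paper derives this via the kernel of the diagonal homomorphism $L\to\prod_i L/g_iBg_i^{-1}$.
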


\begin{proof}
A proof is needed only for the sufficiency.\;So let $H$ be Jordan and let
$F$ be a finite subgroup of $G$. By Definition \ref{Jd}
\begin{equation}\label{KH}
L:=F\cap H
\end{equation}
contains a normal abelian subgroup $A$
such that
\begin{equation}\label{L}
[L:A]\leqslant J_H.
\end{equation}
Let $g$ be an element of $F$. Since $L$ is a normal subgroup of $F$, we infer that
 $gAg^{-1}$
 is a normal abelian subgroup of $L$ and
 \begin{equation}\label{LL}
[L:A]=[L:gAg^{-1}].
\end{equation}

The abelian subgroup
\begin{equation}\label{M}
M:=\bigcap_{g\in F} gAg^{-1}.
\end{equation}
 is normal in $F$.\;We intend to prove
 that $[F:M]$ is upper bounded by a constant not depending on $F$. To 
 this end, fix  the representatives
$g_1,\ldots, g^{\ }_{|F/L|}$ of all cosets of $L$ in $F$. Then \eqref{M} and normality of $A$ in $L$ imply that
\begin{equation}\label{MM}
M=\bigcap_{i=1}^{|F/L|} g_iAg_i^{-1}.
\end{equation}
 From \eqref{MM} we deduce that  $M$ is the kernel of the diagonal homomorphism
 \begin{equation*}
 L\longrightarrow \prod_{i=1}^{|F/L|} L/g_iAg_i^{-1}
  \end{equation*}
  determined by the canonical projections $L\to L/g_iAg_i^{-1}$. This, \eqref{LL}, and \eqref{L} yield
  \begin{equation}\label{KLJ}
  [L:M]\leqslant [L:A]^{|F/L|}\leqslant J_{H}^{|F/L|}.
  \end{equation}

  Let $\pi\colon G\to G/H$ be the canonical projection.
  By  \eqref{KH} the finite subgroup
  $\pi(F)$ of
$G/H$ is isomorphic to $F/L$. Since $G/H$ is bounded, this yields
$|F/L|\leqslant b^{}_{G/H}$.
We then deduce from
 \eqref{KLJ} and $[F:M]=[F:L][L:M]$ that
 \begin{equation*}
 [F:M]\leqslant  b^{}_{G/H} J_{H}^{\,b^{}_{G/H}};
 \end{equation*}
whence the claim.
\quad $\square$ \end{proof}


The following corollary
should be compared with statement (1)(ii)(a) of Theorem\;\ref{gs}:

\begin{corollary}
Let $\,H$ be a finite normal subgroup of a group $\,G$ such that the center of $\,H$ is trivial. If $\,G/H$ is Jordan, then $\,G$ is Jordan and
\begin{equation*}
J_G\leqslant|{\rm Aut}(H)|J_{G/H}^{|{\rm Aut}(H)|}.
\end{equation*}
\end{corollary}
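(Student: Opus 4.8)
The plan is to reduce the statement to Theorem~\ref{JJJJ} by replacing the finite normal subgroup $H$ with the centralizer $C:=C_G(H)$ of $H$ in $G$. First I would verify that $C$ is normal in $G$: since $H$ is normal, conjugation by any $g\in G$ maps $H$ onto $H$, and a one-line check shows $gCg^{-1}\subseteq C$. The conjugation action of $G$ on $H$ then defines a homomorphism $G\to{\rm Aut}(H)$ whose kernel is exactly $C$, so $G/C$ embeds into ${\rm Aut}(H)$. Because $H$ is finite, so is ${\rm Aut}(H)$; hence $G/C$ is finite and bounded with $b_{G/C}=|G/C|\leqslant|{\rm Aut}(H)|$.

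Next I would exploit the hypothesis that the center $Z(H)$ is trivial to prove that $C$ is itself Jordan. The crucial observation is that $C\cap H=Z(H)=\{1\}$, so the canonical projection $\pi\colon G\to G/H$ is injective on $C$ and identifies $C$ with a subgroup of $G/H$. Since $G/H$ is Jordan by assumption, Theorem~\ref{gs}(1)(i) yields that $C$ is Jordan with $J_C\leqslant J_{G/H}$.

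Finally, applying Theorem~\ref{JJJJ} to the normal subgroup $C$---whose quotient $G/C$ is bounded---I obtain that $G$ is Jordan together with the estimate
$$
J_G\leqslant b_{G/C}\,J_C^{\,b_{G/C}}\leqslant|{\rm Aut}(H)|\,J_{G/H}^{\,|{\rm Aut}(H)|},
$$
where the second inequality follows from $b_{G/C}\leqslant|{\rm Aut}(H)|$, $J_C\leqslant J_{G/H}$, and $J_{G/H}\geqslant1$.

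The one genuinely delicate point is choosing the correct normal subgroup to feed into Theorem~\ref{JJJJ}. The hypothesis that $Z(H)$ is trivial is precisely what makes the centralizer $C$ the right choice: it forces $C\cap H=\{1\}$, so that $C$ embeds into the Jordan group $G/H$, and it simultaneously guarantees that $G/C$ is finite via its embedding into ${\rm Aut}(H)$. Once $C$ is identified, everything else is routine bookkeeping of the constants.
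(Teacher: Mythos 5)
Your proof is correct and is essentially the paper's own argument: your centralizer $C=C_G(H)$ is precisely the kernel of the conjugation homomorphism $\varphi\colon G\to{\rm Aut}(H)$ that the paper works with, and both proofs use triviality of $Z(H)$ to embed this subgroup into $G/H$, finiteness of ${\rm Aut}(H)$ to bound $G/C$, and Theorem~\ref{JJJJ} to conclude.
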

\begin{proof} Let $\varphi\colon G\to {\rm Aut}(H)$ be the homomorphism determined by the conjugating action of $G$ on $H$.\;Triviality of the center of $H$ yields
$H\cap {\rm ker}\,\varphi=\{1\}$.
Hence the restriction  of the natural projection $G\to G/H$ to ${\rm ker}\,\varphi$
is an embedding ${\rm ker}\,\varphi \hookrightarrow G/H$.\;Therefore,
${\rm ker}\,\varphi$ is Jordan since $G/H$ is.\;But $G/{\rm ker}\,\varphi$ is finite since it is isomorphic to a subgroup of ${\rm Aut}(H)$ for the finite group $H$.
By Theorem \ref{JJJJ} this implies the claim.
\quad $\square$ \end{proof}

\section{When are \boldmath ${\rm Aut}(X)$ and ${\rm Bir}(X)$
Jordan?}

\subsection{Problems A and B}\label{se}
In \cite[Sect.\,2]{Po1} were posed the
following two
problems:
\vskip 2mm
\noindent
{\bf Problem A.} Describe algebraic varieties $X$ for which
${\rm Aut}(X)$ is Jordan.

\noindent
{\bf Problem B.}
The same with ${\rm Aut}(X)$ replaced by ${\rm Bir}(X)$.

 \vskip 2mm

    Note that for rational varieties
    $X$
    Problem B means finding  $n$ such that the Cremona group ${\rm Cr}_n$ is Jordan;
    in this case, it was essentially  posed in \cite[6.1]{Se1}.

 Describing
 finite subgroups of the groups ${\rm Aut}(X)$ and ${\rm Bir}(X)$ for various varieties $X$
is a classical research direction, currently flourishing.\;Understan\-ding
 which of these groups are Jordan sheds
 a light on the structure of these subgroups.
 Varieties $X$ with non-Jordan group ${\rm Bir}(X)$ or
 ${\rm Aut}(X)$ are, in a sense, more ``symmetric'' and, therefore, more remarkable
 than those with Jordan group.
 The discussion below supports the conclusion that they occur ``rarely'' and their finding is a challenge.

\subsection{Groups \boldmath ${\rm Aut}(X)$}
In this subsection we shall consider
Problem A.

\begin{lemma}\label{comp} Let $X_1,\ldots , X_n$ be all the irreducible
components of a variety $X$. If every ${\rm Aut}(X_i)$ is Jordan, then
${\rm Aut}(X)$ is Jordan.
\end{lemma}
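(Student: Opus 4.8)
The plan is to analyze the structure of $\mathrm{Aut}(X)$ in terms of how automorphisms act on the set of irreducible components $\{X_1,\ldots,X_n\}$. Any automorphism of $X$ must permute these components, since it sends irreducible components to irreducible components. This gives a natural homomorphism $\varphi\colon \mathrm{Aut}(X)\to \mathrm{Sym}_n$, where $\mathrm{Sym}_n$ records the induced permutation. First I would set $N:=\ker\varphi$, the subgroup of automorphisms preserving each component. The quotient $\mathrm{Aut}(X)/N$ embeds into $\mathrm{Sym}_n$, hence is finite and in particular bounded, so that $b_{\mathrm{Aut}(X)/N}\leqslant n!$. This puts me in position to apply Theorem \ref{JJJJ}: it suffices to show that $N$ is Jordan, after which $\mathrm{Aut}(X)$ is Jordan with an explicit constant bound.

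Next I would identify $N$ more concretely. An automorphism in $N$ preserves each $X_i$, and hence restricts to an automorphism of each $X_i$. This yields a homomorphism
\begin{equation*}
\rho\colon N\longrightarrow \prod_{i=1}^{n}\mathrm{Aut}(X_i),\qquad g\longmapsto (g|_{X_1},\ldots,g|_{X_n}).
\end{equation*}
The key point is that $\rho$ is injective: an automorphism that fixes every point-wise restriction agrees with the identity on $\bigcup_i X_i=X$, hence is the identity. Thus $N$ is isomorphic to a subgroup of the direct product $\prod_i \mathrm{Aut}(X_i)$. By hypothesis each $\mathrm{Aut}(X_i)$ is Jordan, so by the product statement Theorem \ref{gs}(2) the finite direct product $\prod_i \mathrm{Aut}(X_i)$ is Jordan, and then by the subgroup statement Theorem \ref{gs}(1)(i) its subgroup $N$ is Jordan as well.

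Combining the two steps finishes the argument: $N$ is Jordan, $\mathrm{Aut}(X)/N$ is bounded (being finite), and $N$ is normal in $\mathrm{Aut}(X)$ since it is the kernel of $\varphi$. Hence Theorem \ref{JJJJ} applies and shows $\mathrm{Aut}(X)$ is Jordan, with $J_{\mathrm{Aut}(X)}\leqslant b\,J_N^{\,b}$ where $b=b_{\mathrm{Aut}(X)/N}\leqslant n!$ and $J_N\leqslant \prod_i J_{\mathrm{Aut}(X_i)}$ by Theorem \ref{gs}.

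The step I expect to carry the actual content is verifying that $\rho$ is well-defined and injective, i.e.\ that an element of $N$ genuinely restricts to an automorphism of each component and is determined by these restrictions. The injectivity is the substantive point: it rests on $X=\bigcup_i X_i$ so that two automorphisms agreeing on all components agree everywhere. Everything else is a formal packaging of the three general facts already established (the permutation action giving a finite quotient, the product of Jordan groups being Jordan, and Theorem \ref{JJJJ}), so I do not anticipate genuine difficulty there; the only care needed is to confirm that restricting a global automorphism to an invariant component indeed lands in $\mathrm{Aut}(X_i)$ rather than merely in the endomorphisms, which follows because the inverse automorphism restricts likewise and provides a two-sided inverse.
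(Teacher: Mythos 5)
Your proof is correct and follows essentially the same route as the paper: the homomorphism to $\mathrm{Sym}_n$ given by permuting components, the injection of its kernel into $\prod_i \mathrm{Aut}(X_i)$ via restriction, and then Theorem \ref{gs}(2) together with Theorem \ref{JJJJ}. The only difference is cosmetic — you spell out the explicit bound $J_{\mathrm{Aut}(X)}\leqslant b\,J_N^{\,b}$ with $b\leqslant n!$, which the paper leaves implicit.
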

\begin{proof} Define the homomorphism
$\pi\colon {\rm Aut}(X)\to {\rm Sym}_n$ by $g\cdot X_i=X_{\pi(g)}$ for $g\in {\rm Aut}(X)$.\;Then $g\cdot X_i=X_i$ for every $g\in {\rm Ker}(\pi)$ and $i$, so the homomorphism $\pi_i\colon {\rm Ker}(\pi)\to {\rm Aut}(X_i)$, $g\mapsto g|_{X_i}$, arises.\;The definition implies that $\pi_1\times\ldots\times \pi_n\colon {\rm Ker}(\pi)\to \prod_{i=1}^n {\rm Aut}(X_i)$ is an injection; whence ${\rm Ker}(\pi)$ is Jordan
by Theorem \ref{gs}(2). Hence ${\rm Aut}(X)$ is Jordan by Theorem\;\ref{JJJJ}.
\quad $\square$ \end{proof}

At this writing (October 2013), not a single variety $X$ with non-Jor\-dan ${\rm Aut}(X)$  is known (to me).
\begin{question}[{{\rm \cite[Quest.\;2.30 and 2.14]{Po1}}}] \label{Jord} Is there an irreducible
variety $X$ such that
${\rm Aut}(X)$ is non-Jordan{\rm?}
Is there an irreducible affine variety $X$ with this property?
\end{question}

\begin{remark} One may consider the counterpart of the first question replacing $X$ by
a connected smooth topological manifold $M$, and ${\rm Aut}(X)$ by ${\rm Diff}(M)$.
The following yields the affirmative answer:

\begin{theorem}[\rm \cite{Po5}]\label{Man}
There is a simply connected noncompact smooth oriented $4$-dimensio\-nal manifold $M$ such that  ${\rm Diff}(M)$ contains
an isomorphic copy of every finitely presented $($in particular, of every finite\hskip .3mm$)$ group. This copy acts on $M$ properly doscontinuously.
\end{theorem}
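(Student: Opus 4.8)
The plan is to reduce the whole statement to a single \emph{universal} finitely presented group together with one classical four-dimensional construction. Concretely, I would first produce a finitely presented group $U$ that contains an isomorphic copy of every finitely presented group; then realize $U$ as the fundamental group of a closed oriented smooth $4$-manifold $N$; and finally take $M$ to be the universal cover $\widetilde N$, letting $U$ act by deck transformations. Since deck transformations are diffeomorphisms acting freely and properly discontinuously, and since every finitely presented group $G$ embeds in $U$, the restriction to each such $G$ yields the desired properly discontinuous copy inside ${\rm Diff}(M)$.

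For the algebraic step I would invoke the existence of a universal finitely presented group, which follows from Higman's embedding theorem. There are only countably many finitely presented groups up to isomorphism, say $G_1,G_2,\ldots$, since each is encoded by a finite presentation over a countable alphabet; their free product $F=\ast_{n}G_n$ is a countable recursively presented group. Using the Higman--Neumann--Neumann embedding I would embed $F$ into a two-generator group, arranged so as to remain recursively presented, and then Higman's embedding theorem embeds that two-generator recursively presented group into a finitely presented group $U$. Composing the embeddings, every $G_n$, hence every finitely presented group, embeds in $U$; and $U$ is infinite because it contains infinite groups.

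For the topological step I would use the standard surgery realization of fundamental groups in dimension four. Starting from a finite presentation $\langle x_1,\ldots,x_g \mid r_1,\ldots,r_k\rangle$ of $U$, one takes the connected sum of $g$ copies of $S^1\times S^3$, which has free fundamental group $F_g$, and performs surgery along embedded circles representing the relators $r_j$; by van Kampen this kills exactly the relators and produces a closed oriented smooth $4$-manifold $N$ with $\pi_1(N)\cong U$. The universal cover $M=\widetilde N$ is then simply connected, smooth, and oriented (orientability lifts), and noncompact because $U$ is infinite, so the covering has infinitely many sheets over the compact $N$. Deck transformations realize $U\hookrightarrow {\rm Diff}(M)$ as a free, properly discontinuous action, and any subgroup of a properly discontinuous group of homeomorphisms again acts properly discontinuously; restricting to the embedded copy of each finitely presented $G$ completes the proof.

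The hard part is the algebraic input: the existence of the universal finitely presented group $U$. This rests on Higman's embedding theorem, a deep result whose proof encodes recursive enumerability into group relations. The two auxiliary reductions — passing from the non-finitely-generated free product $F$ to a finitely generated group while preserving recursiveness, and then applying Higman — must be arranged compatibly, and this is the only genuinely nontrivial point. By contrast the four-dimensional realization and the covering-space argument are routine, dimension four being convenient precisely because every finitely presented group already occurs as a $4$-manifold group and the needed surgeries are unobstructed there.
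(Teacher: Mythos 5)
Your proposal is correct and follows essentially the same route as the proof in \cite{Po5}, which this paper cites for the theorem: take a Higman universal finitely presented group $U$, realize it as the fundamental group of a closed smooth oriented $4$-manifold $N$ by surgery on a connected sum of copies of $S^1\times S^3$, and let $M=\widetilde{N}$ with $U$ (hence every finitely presented group, as a subgroup) acting by deck transformations, which is free and properly discontinuous. No gaps to report.
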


Clearly,  ${\rm Diff}(M)$ is non-Jordan. By \cite[Thm.\,2]{Po5}, ``noncompact'' in Theorem \ref{Man}
cannot be replaced by ``compact''.
The following question (I reformulate it using De\-fi\-nition \ref{Jd}) was posed by \'E.\;Ghys (see \cite[Quest. 13.1]{Fi}): Is the diffeomorphism group of any compact smooth manifold Jordan?
In fact, according to \cite{MR13}, \'E.\;Ghys conjectured the affirmative answer.

\end{remark}

On the other hand, in
many cases it can be proven that ${\rm Aut}(X)$ is Jordan. Below are described several extensive classes of $X$ with this property.

\subsubsection{Toral varieties}

First, consider the
wide
class of affine  varieties singled out by the following

\begin{definition}[{{\rm \cite[Def.\,1.13]{Po1}}}] \label{def_toral}
A variety is called {\it toral\,} if it is isomorphic to a closed subvariety of some
${\mathbf A}^n\setminus \bigcup_{i=1}^n H_i$, where $H_i$ is the set of zeros of the $i$th standard coordinate function
$x_i$ on ${\mathbf A}^n$.
\end{definition}

\begin{remark}
${\mathbf A}^n\setminus \bigcup_{i=1}^n H_i$ is the group variety of
the $n$-dimensional affine torus; whence the
terminology.\;Warning:\;``toral'' does not imply ``affine to\-ric'' in the sense of
\cite{Fu}.
\end{remark}

The class of toral varieties is closed with respect to taking products and closed subvarieties.

\begin{lemma}[{{\rm \cite[Lemma 1.14(a)]{Po1}}}]\label{toral}
The following properties of an affine variety $X$ are equivalent:
\begin{enumerate}[\hskip 2.2mm \rm(i)]
\item $X$ is toral;
\item $k[X]$ is generated by $k[X]^*$,
the group  of units of $k[X]$.
\end{enumerate}
\end{lemma}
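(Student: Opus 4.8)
The plan is to prove the equivalence of the two characterizations of toral varieties directly from the definitions, handling each implication in turn.

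For the implication (i) $\Rightarrow$ (ii), suppose $X$ is toral, so $X$ is isomorphic to a closed subvariety of $T:={\mathbf A}^n\setminus\bigcup_{i=1}^n H_i$. On $T$ each coordinate function $x_i$ is nowhere vanishing, hence $x_i$ is a unit in $k[T]=k[x_1^{\pm 1},\ldots,x_n^{\pm 1}]$, and these units together with their inverses generate $k[T]$ as a $k$-algebra. Restricting to the closed subvariety $X$, the image of each $x_i$ is a unit in $k[X]$ (a restriction of a nowhere-vanishing function is nowhere vanishing), and since the restriction map $k[T]\to k[X]$ is a surjective $k$-algebra homomorphism, these restricted units generate $k[X]$. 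Thus $k[X]$ is generated by $k[X]^*$.

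For the converse (ii) $\Rightarrow$ (i), suppose $k[X]$ is generated as a $k$-algebra by its group of units $k[X]^*$. Since $X$ is affine, $k[X]$ is a finitely generated $k$-algebra, so I can choose a finite set of units $u_1,\ldots,u_n\in k[X]^*$ that generate $k[X]$. Each $u_i$, being a unit, is a nowhere-vanishing regular function on $X$, so the tuple $(u_1,\ldots,u_n)$ defines a morphism $\varphi\colon X\to T$ whose image lands in $T$ precisely because none of the $u_i$ vanishes. Because the $u_i$ generate $k[X]$, the comorphism $\varphi^*\colon k[T]\to k[X]$ is surjective, so $\varphi$ is a closed immersion of $X$ onto a closed subvariety of $T$; this exhibits $X$ as toral.

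I expect the main technical point to be verifying that $\varphi$ is genuinely a \emph{closed} immersion rather than merely an immersion or a morphism with dense image. This reduces to the standard fact that a morphism of affine varieties is a closed immersion if and only if its comorphism is surjective, which holds here because the $u_i$ were chosen to generate $k[X]$; one must also note that a finite generating set of units can be extracted from the (possibly infinite) group $k[X]^*$ using the finite generation of $k[X]$ as a $k$-algebra. The rest is routine bookkeeping with the correspondence between closed subvarieties and surjective comorphisms.
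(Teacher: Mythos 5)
Your proof is correct and follows essentially the same route as the paper: surjectivity of the restriction map from the coordinate ring of the torus gives (i)$\Rightarrow$(ii), and the finitely many unit generators define a closed embedding for (ii)$\Rightarrow$(i). The only cosmetic difference is that you map directly into the torus and invoke the surjective-comorphism criterion there, whereas the paper first notes that $\iota\colon X\to{\mathbf A}^n$ is a closed embedding and then observes that its image lies in ${\mathbf A}^n\setminus\bigcup_{i=1}^n H_i$; your explicit extraction of a finite generating set of units is a point the paper leaves implicit.
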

\begin{proof} If $X$ is closed in ${\mathbf A}^n\setminus \bigcup_{i=1}^n H_i$, then the restriction of functions is an epimorphism
$k\big[{\mathbf A}^n\setminus \bigcup_{i=1}^n H_i\big]\to k[X].$ Since
$k\big[{\mathbf A}^n\setminus \bigcup_{i=1}^n H_i\big]=k[x_1,\ldots, x_n, 1/x_1,\ldots 1/x_n]$, this proves ${\rm (i)}\Rightarrow {\rm (ii)}$.

Conversely, assume that $\rm (ii)$ holds and let
\begin{equation}\label{ca}
k[X]=k[f_1,\ldots, f_n]
\end{equation}
 for some $f_1,\ldots, f_n\!\in\! k[X]^*$.\;Since\;$X$\;is affine, \eqref{ca} implies that $\iota \colon X\to {\bf A}\!^n$,
$x\mapsto (f_1(x), \ldots, f_n(x))$, is a closed embedding.\;The
standard coordinate functions on ${\bf A}\!^n$ do not vanish
on $\iota(X)$ since every $f_i$ does not vanish on $X$.\;Hence $\iota(X)\subseteq {\mathbf A}^n\setminus \bigcup_{i=1}^n H_i$. This proves (ii)$\Rightarrow$(i). \quad $\square$ \end{proof}

\begin{lemma}\label{cov} Any quasiprojective variety $X$ endowed with
a finite automorphism group $G$ is
covered by
$\,G$-stable toral open subsets.
\end{lemma}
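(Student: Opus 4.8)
The plan is to reduce the statement to a purely local one: it suffices to show that every point $x\in X$ possesses a $G$-stable toral open neighborhood, since the union of these over all $x$ is then the desired cover. So I fix a point $x$ and consider its orbit $Gx$, which is finite because $G$ is.

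First I would produce a $G$-stable \emph{affine} open neighborhood of $x$. Since $X$ is quasiprojective, the finite set $Gx$ is contained in some affine open $V\subseteq X$: realizing $X$ as a locally closed subvariety of a projective space, one chooses a hypersurface containing the boundary $\overline{X}\setminus X$ but avoiding the finitely many points of $Gx$, which is possible because $k$ is infinite, and the complement is an affine open of $X$ meeting the requirement. As $X$ is quasiprojective it is separated, so the finite intersection $U:=\bigcap_{g\in G} g\cdot V$ is again affine; it is manifestly $G$-stable and contains $Gx$. This reduces the problem to the affine case equipped with a $G$-action.

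Next I would render $U$ toral by a $G$-equivariant localization. I take algebra generators of $k[U]$ and enlarge the generating set to its $G$-orbit, obtaining a finite generating set $h_1,\dots,h_N$ that $G$ merely permutes. Partitioning the index set $\{1,\dots,N\}$ into $G$-orbits, I choose for each orbit a constant $c\in k$ avoiding the finitely many values $h_j(x)$ taken on that orbit (again possible since $k$ is infinite). Setting $g_j:=h_j-c_{\mathrm{orbit}(j)}$, the functions $g_j$ still generate $k[U]$, since they differ from the $h_j$ by constants; they satisfy $g_j(x)\neq 0$ for all $j$; and, crucially, $G$ permutes them, because the shift constants are constant along $G$-orbits. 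Hence the principal open set $U':=\{u\in U: g_1(u)\cdots g_N(u)\neq 0\}$ is $G$-stable and contains $x$, and its coordinate ring $k[U']=k[U][1/(g_1\cdots g_N)]$ is generated by the units $g_1,\dots,g_N,1/g_1,\dots,1/g_N$. By Lemma \ref{toral} the variety $U'$ is toral, which completes the reduction and hence the proof.

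The two appeals to the infinitude of $k$ are routine. The one point requiring care—the heart of the matter—is the simultaneous demand that the localizing functions $g_j$ be nonvanishing at $x$ \emph{and} be permuted by $G$; I expect this to be the main (if minor) obstacle, and it is resolved exactly by taking the shift constants to be $G$-invariant on the index set, so that equivariance is preserved while the finitely many vanishing conditions at $x$ are still avoidable.
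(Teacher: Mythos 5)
Your proof is correct and follows essentially the same route as the paper's: reduce to a $G$-stable affine neighborhood of the finite orbit by intersecting the $G$-translates of an affine open containing $G\cdot x$ (using quasiprojectivity and separatedness), then invert a $G$-invariant product of shifted generators and invoke Lemma \ref{toral}. The only cosmetic difference is the order of operations—the paper shifts the generators to be nonvanishing on $G\cdot x$ \emph{before} enlarging the set to be $G$-stable, whereas you enlarge first and then shift by orbitwise-constant scalars—and both versions work for the same reason.
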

\begin{proof}  First, any point $x\in X$ is contained in a $G$-stable affine open subset of $X$.\;Indeed, since the orbit $G\cdot x$ is finite and $X$ is quasiprojective, there is an affine open subset $U$ of $X$ containing $G\cdot x$. Hence
$V:=\bigcap_{g\in G} g\cdot U$
is a $G$-stable open subset containing $x$, and, since every
$g\cdot U$ is affine, $V$ is affine as well, see, e.g.,\;\cite[Prop.\,1.6.12(i)]{Sp}.

Thus, the problem is reduced to the case where $X$ is affine.\;Assume then that $X$ is affine, and let $k[X]=k[h_1,\ldots, h_s]$.\;Replacing $h_i$ by $h_i+\alpha_i$ for an appropriate $\alpha_i\in k$, we may (and shall) assume that every $h_i$ vanishes nowhere on the $G\cdot x$.\;Expanding the set $\{h_1,\ldots, h_s\}$ by including $g\cdot h_i$ for every $i$ and $g
\in G$, we may (and shall) assume that
$\{h_1,\ldots, h_s\}$ is $G$-stable.\;Then $h:=h_1\cdots h_s\in k[X]^G$. Hence the affine open set $X_h:=\{z\in X \mid h(z)\neq 0\}$ is $G$-stable and contains $G\cdot x$.
Since $k[X_h]=k[h_1,\ldots, h_s, 1/h]$ and $h_1,\ldots, h_s, 1/h\in k[X_h]^*$,\; the variety $X_h$ is toral by Lemma \ref{toral}.
\quad $\square$ \end{proof}

\begin{remark} Lemma \ref{cov}
and its proof
remain true for any variety $X$ such that every $G$-orbit
is contained in an affine open subset; whence the following
\end{remark}

\begin{corollary} Every
variety is
covered by
open toral subsets.
\end{corollary}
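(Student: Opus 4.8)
The plan is to obtain this as the special case of Lemma \ref{cov} in which the automorphism group is trivial. Precisely, I would invoke the extension of Lemma \ref{cov} recorded in the preceding remark, applied with $G=\{1\}$.

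First I would check that the hypothesis of that extension holds automatically for $G=\{1\}$: each $G$-orbit is then a single point, and by the definition of a variety in the sense of Serre every point of $X$ lies in an affine open subset. Hence the condition ``every $G$-orbit is contained in an affine open subset'' is satisfied trivially, and the extended lemma applies. Its conclusion is that $X$ is covered by $G$-stable toral open subsets; as $G$ is trivial the $G$-stability condition is vacuous, so these are just toral open subsets, which is exactly the assertion.

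No genuine obstacle arises, since all the real work is already carried out in Lemma \ref{cov}. The only point worth keeping in mind is why the proof of that lemma survives the passage to the trivial group: the sole appeal to quasiprojectivity there is to produce an affine open set containing the finite orbit $G\cdot x$, and when $G\cdot x$ reduces to a single point such a set is furnished directly by the local affineness of $X$. The remainder of the argument---replacing generators of $k[X]$ by translates that are nonvanishing at the point, passing to the principal open set cut out by their product, and applying the unit criterion of Lemma \ref{toral}---then goes through unchanged, so one could alternatively write out this direct argument in place of the formal reduction if a self-contained proof were preferred.
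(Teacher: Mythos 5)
Your proposal is correct and is exactly the paper's intended derivation: the Corollary follows from the Remark extending Lemma \ref{cov} to varieties in which every $G$-orbit lies in an affine open subset, applied with $G=\{1\}$, where the hypothesis holds because every point of a variety has an affine open neighborhood. Your closing observation about why the proof of Lemma \ref{cov} goes through verbatim in this case matches the paper's argument as well.
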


For irreducible toral varieties the following was proved   in \cite[Thm. 2.16]{Po1}.

\begin{theorem} \label{jtoral} The automorphism group of every toral variety is Jordan.
\end{theorem}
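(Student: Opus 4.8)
The plan is to reduce to the irreducible case and then exploit the action of ${\rm Aut}(X)$ on the group of units of the coordinate ring. First I would apply Lemma \ref{comp}: the irreducible components of $X$ are closed subvarieties of $X$, hence toral, so it suffices to treat an irreducible toral $X$. Being closed in an affine torus ${\mathbf A}^n\setminus\bigcup_{i=1}^n H_i$, such an $X$ is affine, so ${\rm Aut}(X)$ embeds faithfully into ${\rm Aut}(k[X])$; moreover, by Lemma \ref{toral}, $k[X]$ is generated by $k[X]^\ast$.

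The key algebraic input I would use is that for an irreducible variety over the algebraically closed field $k$ the abelian group $\Lambda:=k[X]^\ast/k^\ast$ is free of finite rank, say $r$ (the units theorem: torsion-freeness is immediate since $k$ is algebraically closed, and finite generation is the substantive part). Every automorphism of $X$ induces an automorphism of the $k$-algebra $k[X]$ that preserves the characteristic subgroup $k^\ast\subseteq k[X]^\ast$, whence a homomorphism $\rho\colon {\rm Aut}(X)\to {\rm Aut}(\Lambda)\cong {\bf GL}_r({\bf Z})$. By Minkowski's theorem (Example \ref{bou}) the group ${\bf GL}_r({\bf Z})$ is bounded, and any subgroup of a bounded group is bounded; hence $G/H$ is bounded, where $G:={\rm Aut}(X)$ and $H:={\rm Ker}\,\rho$. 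By Theorem \ref{JJJJ} it then suffices to show that $H$ is Jordan.

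I claim that $H$ is in fact abelian. Indeed, for $\phi\in H$ and any unit $u$, the classes of $\phi^\ast u$ and $u$ in $\Lambda$ coincide, so $\phi^\ast u=\chi_\phi(\bar u)\,u$ for a uniquely determined $\chi_\phi(\bar u)\in k^\ast$; since $\phi^\ast$ is multiplicative and fixes $k^\ast$ pointwise, the assignment $\bar u\mapsto \chi_\phi(\bar u)$ is a well-defined character $\chi_\phi\in {\rm Hom}(\Lambda, k^\ast)$, and $\phi\mapsto\chi_\phi$ is a homomorphism $H\to {\rm Hom}(\Lambda, k^\ast)\cong (k^\ast)^r$. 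Because $k[X]$ is generated by its units (this is where torality enters decisively), $\phi^\ast$ is determined by its effect on $k[X]^\ast$, so $\chi_\phi=1$ forces $\phi^\ast={\rm id}$, i.e. $\phi={\rm id}$. Thus $H$ embeds into the abelian group $(k^\ast)^r$ and is abelian, hence Jordan, and Theorem \ref{JJJJ} gives that ${\rm Aut}(X)$ is Jordan, with $J_{{\rm Aut}(X)}\leqslant b_{{\bf GL}_r({\bf Z})}$. The main obstacle is the units theorem asserting that $k[X]^\ast/k^\ast$ is finitely generated and free; once this finite rank is secured, the argument turns on the fortunate fact that the automorphism group of the resulting lattice, ${\bf GL}_r({\bf Z})$, is bounded, and on the observation that torality forces ${\rm Ker}\,\rho$ to act on units merely by rescaling and hence to be abelian.
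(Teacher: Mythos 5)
Your proof is correct and follows essentially the same route as the paper: reduce to the irreducible case via Lemma \ref{comp}, use Rosenlicht's theorem that $k[X]^\ast/k^\ast$ is free of finite rank, show the kernel $H$ of the action on this lattice is abelian (using that units generate $k[X]$, i.e.\ torality), note that ${\rm Aut}(X)/H$ embeds in the bounded group ${\bf GL}_r({\bf Z})$, and conclude by Theorem \ref{JJJJ}. Your character-theoretic argument that $H$ embeds in $(k^\ast)^r$ is exactly the refinement the paper records in the remark following its proof, so the two arguments coincide in substance.
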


\begin{proof} By Theorem \ref{comp} it suffices to prove this for irreducible toral varieties.

By \cite{Ro}, for any irreducible variety $X$,
 $$\Gamma:=k[X]^*/k^*$$ is
a  free abelian group of
finite rank.\;Let $X$ be toral and let
$H$ be the kernel of
the natural action of ${\rm Aut}(X)$ on $\Gamma$.\;We claim that $H$ is abelian.
Indeed, for every function $f\in k[X]^*$, the line in $k[X]$ spanned over $k$ by $f$
is $H$-stable. Since ${\bf GL}_1$ is abelian,
this yields that
\begin{equation}\label{hh}
h_1h_2\!\cdot\! f=h_2h_1\!\cdot\! f\quad \mbox{for any elements\quad $h_1, h_2\in H$}.
 \end{equation}
   As $X$ is toral, $k[X]^*$ generates the $k$-algebra $k[X]$ by Lemma
\ref{toral}. Hence \eqref{hh} holds for every $f\in k[X]$. Since $X$ is affine, the automorphisms of $X$ coincide if and only if they induce the same automorphisms of  $k[X]$.\;Therefore, $H$ is abelian, as claimed.

Let $n$ be the rank of $\Gamma$.\;Then ${\rm Aut}(\Gamma)$ is isomorphic to ${\bf GL}_n({\bf Z})$.\;By the definition of $H$, the natural action of
${\rm Aut}(X)$ on $\Gamma$ induces an embedding of ${\rm Aut}(X)/H$ into ${\rm Aut}(\Gamma)$.~Hence ${\rm Aut}(X)/H$ is isomorphic to a subgroup of ${\bf GL}_n({\bf Z})$ and therefore
is bounded by
Example \ref{bbbb}(2).\;Thus,
${\rm Aut}(X)$ is an extension of a bounded group by an abelian group, hence Jordan by Theorem\;\ref{JJJJ}.
This completes the proof.
\quad $\square$ \end{proof}

\begin{remark} {\rm
Maintain the notation of the proof of Theorem \ref{jtoral} and assume that $X$ is irreducible.\;Let $f_1,\ldots f_n$ be a basis of $\Gamma$.\;There are the homomorphisms $\lambda_i\colon H\to k^*$,  $i=1,\ldots, n$, such that $h\cdot f_i=\lambda(h)f_i$ for every $h\in H$ and $i$.\;Since $k[X]^*$ generates $k[X]$, the diagonal map
$
H\to (k^*)^n,\hskip 1mm h\mapsto (\lambda_1(h),\ldots,\lambda_n(h)),
$
is injective. This and the proof of Theorem \ref{jtoral} show that for any irreducible toral variety $X$
with ${\rm rk}\,k[X]^*/k^*=n$, there is an exact sequence
\begin{equation*}
\{1\}\to D\to {\rm Aut}(X)\to B\to \{1\},
\end{equation*}
where $D$ is a subgroup of the torus $(k^*)^n$ and $B$ is a subgroup of ${\bf GL}_n({\bf Z})$.
}
\end{remark}

Combining Theorem \ref{jtoral}  with Corollary of Lemma \ref{cov},  we get the following:

\begin{theorem}\label{neighb}
Any point of any variety has an open neighborhood $U$ such that ${\rm Aut}(U)$ is Jordan.
\end{theorem}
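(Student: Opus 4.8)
The plan is to deduce this statement directly from the two results that the preceding sentence advertises, namely Theorem \ref{jtoral} and the Corollary of Lemma \ref{cov}; no new construction is needed. Given an arbitrary point $x$ of an arbitrary variety $X$, the first step is to produce a toral open neighborhood of $x$. This is exactly what the Corollary of Lemma \ref{cov} supplies: since every variety is covered by open toral subsets, at least one member $U$ of such a cover contains $x$, and by construction $U$ is itself a variety that happens to be toral.

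The second step is to invoke Theorem \ref{jtoral}, which asserts that the automorphism group of any toral variety is Jordan. Applying it to the toral variety $U$ gives that ${\rm Aut}(U)$ is Jordan, and since $U$ is an open neighborhood of $x$, the statement follows at once.

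The only point that deserves a moment's attention is that the Corollary applies to an \emph{arbitrary} variety, not merely to a quasiprojective one; this is precisely the content of the Remark following Lemma \ref{cov}, where one takes the acting group to be trivial, so that the hypothesis ``every orbit lies in an affine open subset'' reduces to the standard fact that every point of a variety lies in an affine open subset. Beyond this observation there is no genuine obstacle: the theorem is a formal corollary of the two cited results, and its proof amounts to a single sentence combining them.
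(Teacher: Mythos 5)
Your proof is correct and is exactly the paper's argument: the theorem is stated there as an immediate consequence of combining Theorem \ref{jtoral} with the Corollary of Lemma \ref{cov}, precisely as you do. Your added remark about why the Corollary holds for arbitrary (not just quasiprojective) varieties — via the Remark following Lemma \ref{cov} with trivial group — is a faithful reading of how the paper itself justifies that step.
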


\subsubsection{Affine spaces}\label{as}

Next, consider the fundamental objects of algebraic geometry, the affine spaces ${\mathbf A}^{\hskip -.3mm n}$.\;The group ${\rm Aut}({\mathbf A}^{\hskip -.3mm n})$
is the ``{\it af\-fine Cremona group of rank} $n$''.

Since ${\rm Aut}({\mathbf A}^{\hskip -.5mm 1})$ is the affine algebraic group ${\rm Aff}\!_1$, it is Jordan by Theorem\;\ref{lg}.

Since ${\rm Aut}({\mathbf A}^{\hskip -.3mm 2})$ is  the subgroup of ${\rm Cr}_2$, it is Jordan by Example \ref{Cr2}.\;Another proof: By \cite{Ig} every finite subgroup of ${\rm Aut}({\mathbf A}^{\hskip -.3mm 2})$ is conjugate to a subgroup of ${\bf GL}_2(k)$, so the claim follows from Theorem \ref{Jt}.

The group ${\rm Aut}({\mathbf A}^{\hskip -.3mm 3})$ is Jordan being the subgroup of
${\rm Cr}_3$ that is Jordan by Corollary \ref{Cr3} below.

 At this writing (October 2013) is unknown whether ${\rm Aut}({\mathbf A}^{\hskip -.3mm n})$  is Jordan
for $n\geqslant 4$ or not.\;By Theorem \ref{RC}, if the so-called BAB Conjecture (see Subsection \ref{recent}    below) holds true in dimension $n$,
then
${\rm Cr}_n$
is Jordan, hence
${\rm Aut}({\mathbf A}^{\hskip -.3mm n})$ is Jordan as well.

\subsubsection{Fixed points and Jordaness}\label{fpm}

The following method of proving Jordaness of  ${\rm Aut}(X)$ was
suggested in \cite[Sect.\;2]{Po1} and  provides extensive classes of
$X$ with Jordan ${\rm Aut}(X)$.\;It is based on the use of
the following
fact:

\begin{lemma} \label{linea}
Let $\,X$ be an irreducible variety, let $\,G$ be a finite subgroup of ${\rm Aut}(X)$, and let $\,x\in X$ be a fixed point of $\,G$.\;Then the natural action of $\,G$ on ${\rm T}_{x, X}$, the tangent space of $X$ at $x$,  is faithful.
\end{lemma}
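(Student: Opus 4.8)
The plan is to reduce the assertion to a purely local one about the local ring $\mathcal{O}:=\mathcal{O}_{X,x}$ of $X$ at $x$ and its $\mathfrak{m}$-adic completion, where $\mathfrak{m}$ is the maximal ideal of $\mathcal{O}$. Since $x$ is fixed by $g\in G$, the automorphism $g$ acts on $\mathcal{O}$ preserving $\mathfrak{m}$, hence on $\mathfrak{m}/\mathfrak{m}^2$, and the induced action on ${\rm T}_{x, X}=(\mathfrak{m}/\mathfrak{m}^2)^*$ is the dual one. Thus faithfulness is equivalent to the implication: if $g\in G$ acts trivially on $\mathfrak{m}/\mathfrak{m}^2$, then $g=\mathrm{id}$. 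The first goal is to show that such a $g$ acts trivially on the completion $\widehat{\mathcal{O}}$.

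The key step is an induction along the $\mathfrak{m}$-adic filtration in which the finiteness of $g$ and the hypothesis $\mathrm{char}\,k=0$ are used together. Suppose $g\neq\mathrm{id}$ on $\widehat{\mathcal{O}}$, and let $n\geqslant 2$ be minimal with $g$ acting nontrivially on $\mathfrak{m}/\mathfrak{m}^{n+1}$; then $g$ is trivial on $\mathfrak{m}/\mathfrak{m}^{n}$, so for $f\in\mathfrak{m}$ we may write $g(f)\equiv f+\delta(f)\pmod{\mathfrak{m}^{n+1}}$ with $\delta(f)\in\mathfrak{m}^{n}$. Applying $g$ to a product $f_1f_2$ and using $f_i\,\delta(f_j)\in\mathfrak{m}^{n+1}$ shows $\delta$ vanishes on $\mathfrak{m}^2$ and so descends to a $k$-linear map $\delta\colon \mathfrak{m}/\mathfrak{m}^2\to \mathfrak{m}^{n}/\mathfrak{m}^{n+1}$, nonzero by minimality of $n$; the same computation (valid since $2n-1\geqslant n+1$) shows $g$ fixes $\mathfrak{m}^{n}/\mathfrak{m}^{n+1}$. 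Iterating then gives $g^{j}(f)\equiv f+j\,\delta(f)\pmod{\mathfrak{m}^{n+1}}$ for all $j$. Taking $j$ equal to the order $m$ of $g$ and using $g^{m}=\mathrm{id}$ yields $m\,\delta(f)=0$ in $\mathfrak{m}^{n}/\mathfrak{m}^{n+1}$; as $\mathrm{char}\,k=0$ this forces $\delta=0$, a contradiction. Hence $g$ is trivial on every $\mathfrak{m}^{k}/\mathfrak{m}^{k+1}$, and therefore on $\widehat{\mathcal{O}}$.

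It then remains to descend back to $X$. Since $\mathcal{O}$ is Noetherian, Krull's intersection theorem gives $\bigcap_{k}\mathfrak{m}^{k}=0$, so $\mathcal{O}\hookrightarrow\widehat{\mathcal{O}}$ and $g$ acts as the identity on $\mathcal{O}$. Because $X$ is irreducible, $k(X)=\mathrm{Frac}(\mathcal{O})$, so $g^*$ is the identity on the function field, i.e. $g$ coincides with $\mathrm{id}_X$ on a dense open subset; as a variety is separated, the two morphisms $g$ and $\mathrm{id}_X$ then agree everywhere, whence $g=\mathrm{id}$.

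I expect the filtration step to be the main obstacle, and it is precisely there that both hypotheses are genuinely needed, through the cancellation $m\,\delta=0\Rightarrow\delta=0$. Dropping either is fatal: the Möbius automorphism $\sigma\colon t\mapsto t/(1-t)$ of $\mathbf{P}^1$ fixes the point $t=0$ and acts trivially on the tangent line there, yet $\sigma^{j}(t)=t/(1-jt)$, so in characteristic $0$ it has infinite order (a counterexample if finiteness of $G$ is dropped), while in characteristic $p$ it has order $p$ (a counterexample if $\mathrm{char}\,k=0$ is dropped). This is exactly why the present proof invokes both $g^{m}=\mathrm{id}$ and the invertibility of $m$ in $k$.
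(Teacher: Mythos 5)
Your proof is correct, and its core mechanism is genuinely different from the paper's. Both arguments are local at the fixed point and share the same endgame: once the offending automorphism is known to act trivially on every quotient $\mathfrak{m}/\mathfrak{m}^{d+1}$, where $\mathfrak{m}=\mathfrak{m}_{x,X}\subset\mathcal{O}_{x,X}$, Krull's intersection theorem forces triviality on $\mathcal{O}_{x,X}$, hence on $k(X)={\rm Frac}(\mathcal{O}_{x,X})$, hence on $X$. The difference is in how triviality modulo each $\mathfrak{m}^{d+1}$ is obtained. The paper uses linear reductivity of the finite group $G$ (that is where ${\rm char}\,k=0$ enters there): it produces a $G$-stable splitting $\mathfrak{m}=L\oplus\mathfrak{m}^2$ with $L$ dual to ${\rm T}_{x,X}$, and Nakayama's lemma then shows that powers of elements of $L$ span $\mathfrak{m}$ modulo $\mathfrak{m}^{d+1}$, so the kernel $K$ of the tangent representation, fixing $L$ pointwise, fixes each $\mathfrak{m}/\mathfrak{m}^{d+1}$ pointwise. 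You instead run the classical filtration induction on a single element $g$: extract the deviation $\delta$ at the first level where $g$ acts nontrivially, show it accumulates linearly in the iterates, and kill it using $g^{m}={\rm id}$ together with the invertibility of $m$ in $k$. Your route is more elementary (no representation theory, no equivariant splitting), it treats one finite-order automorphism at a time rather than the whole group, and it visibly proves the sharper statement that invertibility in $k$ of the order of $g$ suffices; your $t\mapsto t/(1-t)$ example shows this is exactly sharp. What the paper's route buys is the equivariant linearization $L\subset\mathfrak{m}$ of the cotangent representation, which is the viewpoint underlying its later fixed-point method. One wording slip to fix: what your contradiction actually establishes is that $g$ is trivial on every filtered quotient $\mathfrak{m}/\mathfrak{m}^{k+1}$, not merely on the associated graded pieces $\mathfrak{m}^{k}/\mathfrak{m}^{k+1}$ as written near the end (triviality on the graded pieces alone would not give triviality on $\widehat{\mathcal{O}}$); similarly, the vanishing of $\delta$ on all of $\mathfrak{m}^{2}$ needs the routine remark that products of at least two elements of $\mathfrak{m}$ span $\mathfrak{m}^{2}$ modulo $\mathfrak{m}^{n+1}$. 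Both are cosmetic and do not affect the argument.
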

\begin{proof}
Let
$\mathfrak m_{x, X}$ be
the maximal ideal of
$\mathcal O_{x, X}$,
the local ring of $X$ at $x$.\;Being finite, $G$ is reductive.\;Since ${\rm char}\,k=0$, this implies that $\mathfrak m_{x, X}=L\oplus \mathfrak m_{x, X}^2$ for some submodule $L$ of the $G$-module
$\mathfrak m_{x, X}$.\;Let $K$ be the kernel of the action of $G$ on $L$ and let $L^d$ be the $k$-linear span in $\mathfrak m_{x, X}$ of the $d$th powers of all the elements of $L$. By the Nakayama's Lemma, the
restriction to $L^d$ of the natural projection $\mathfrak m_{x, X}\to \mathfrak m_{x, X}/\mathfrak m_{x, X}^{d+1}$ is surjective. Hence $K$ acts trivially on $\mathfrak m_{x, X}/\mathfrak m_{x, X}^{d+1}$ for every $d$.

 Take an element $f\in \mathfrak m_{x, X}$.\;Since $G$ is finite, the $k$-linear span $\langle K\cdot f\rangle$ of the $K$-orbit of $f$ in $\mathfrak m_{x, X}$ is finite-dimensional.\;This and $\bigcap_{s}\mathfrak m_{x, X}^s=\{0\}$
(see,\;e.g.,\;\cite[Cor.\;10.18]{AM}) implies that  $\langle K\cdot f\rangle \cap
\mathfrak m_{x, X}^{d+1}=\{0\}$ for some $d$.  Since $f-g\cdot f\in \mathfrak m_{x, X}^{d+1}$ for every element $g\in K$, we conclude that $f=g\cdot f$, i.e., $f$ is $K$-invariant. Thus, $K$ acts trivially on $\mathfrak m_{x, X}$, hence on $\mathcal O_{x, X}$ as well. Since $k(X)$ is the field of fractions of $\mathcal O_{x, X}$,
$K$ acts trivially on $k(X)$, and therefore, on $X$. But $K$ acts on $X$ faithfully because $K\subseteq {\rm Aut}(X)$.\;This proves that $K$ is trivial.\;Since $L$ is the dual of the $G$-module ${\rm T}_{x, X}$, this completes the proof.
\quad $\square$ \end{proof}

The idea of the method is to use the fact that
 if a finite subgroup $G$ of ${\rm Aut}(X)$
has a fixed point $x\in X$, then, by Lemma \ref{linea} and Theorem \ref{Jt}, there is a normal abelian subgroup
of $G$ whose index in $\,G$ is at most $J_{{\bf GL}_n(k)}$ for  $n=\dim {\rm T}_{x, X}$.

This yields the following:

\begin{theorem}\label{aaa} Let $X$ be an irreducible variety and let
$\,G$ be a finite subgroup of ${\rm Aut}(X)$.\;If $G$ has a fixed point in $X$,
then there is a normal abelian subgroup of $\,G$
whose index in $\,G$ is at most $J_{{\bf GL}_m(k)}$, where
\begin{equation}\label{m}
m=\underset{x}{\max} \dim {\rm T}_{x, X}.
\end{equation}
\end{theorem}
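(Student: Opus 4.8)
The plan is to assemble the statement directly from Lemma \ref{linea}, Jordan's Theorem, and the monotonicity of the Jordan constant under passage to subgroups recorded in Theorem \ref{gs}(1)(i). The essential geometric input---faithfulness of the linearized action at a fixed point---has already been established in Lemma \ref{linea}, so what remains is essentially group-theoretic bookkeeping.

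First I would fix a point $x\in X$ fixed by $G$ and set $n:=\dim {\rm T}_{x, X}$. By Lemma \ref{linea}, the natural action of $G$ on the tangent space ${\rm T}_{x, X}$ is faithful. Choosing a $k$-basis of ${\rm T}_{x, X}$ identifies ${\rm GL}({\rm T}_{x, X})$ with ${\bf GL}_n(k)$ and thereby produces an embedding $G\hookrightarrow {\bf GL}_n(k)$.

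Next I would invoke Jordan's Theorem (Theorem \ref{Jt}), which asserts that ${\bf GL}_n(k)$ is Jordan. Applying Definition \ref{Jd} to the finite subgroup $G\subseteq {\bf GL}_n(k)$ then yields a normal abelian subgroup of $G$ whose index in $G$ is at most $J_{{\bf GL}_n(k)}$.

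Finally I would compare $J_{{\bf GL}_n(k)}$ with $J_{{\bf GL}_m(k)}$. Since $X$ is a variety, the dimensions of its tangent spaces are bounded, so the maximum $m$ in \eqref{m} is a well-defined finite integer and $n\leqslant m$. The standard block embedding ${\bf GL}_n(k)\hookrightarrow {\bf GL}_m(k)$ together with Theorem \ref{gs}(1)(i) gives $J_{{\bf GL}_n(k)}\leqslant J_{{\bf GL}_m(k)}$, whence the claimed index bound $J_{{\bf GL}_m(k)}$. I do not expect any genuine obstacle here: the only points requiring care are that $m$ is finite and that the monotonicity of the Jordan constant applies, while the entire substance of the argument resides in Lemma \ref{linea}.
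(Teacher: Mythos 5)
Your proposal is correct and follows exactly the paper's route: the paper derives Theorem \ref{aaa} from Lemma \ref{linea} combined with Theorem \ref{Jt}, passing from $J_{{\bf GL}_n(k)}$ (with $n=\dim {\rm T}_{x,X}$ at the fixed point) to $J_{{\bf GL}_m(k)}$ via the subgroup monotonicity of the Jordan constant. Your added remarks on the finiteness of $m$ and the block embedding ${\bf GL}_n(k)\hookrightarrow{\bf GL}_m(k)$ are just explicit bookkeeping of steps the paper leaves implicit.
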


\begin{corollary}
\label{fp}
If every finite automorphism group of an irreducible variety $X$ has a fixed point in $X$, then  ${\rm Aut}(X)$ is Jordan and
\begin{equation*}
J_{{\rm Aut}(X)}\leqslant J_{{\bf GL}_{m}(k)},
\end{equation*}
where $m$ is defined by \eqref{m}.
\end{corollary}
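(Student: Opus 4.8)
The plan is to deduce this directly from Theorem \ref{aaa} by applying it uniformly across all finite subgroups of ${\rm Aut}(X)$ and observing that the resulting index bound is independent of the subgroup. First I would note that any finite subgroup $G$ of ${\rm Aut}(X)$ is, in particular, a finite automorphism group of $X$; hence the hypothesis of the corollary guarantees that $G$ has a fixed point $x\in X$. Thus Theorem \ref{aaa} applies to every such $G$ and produces, for each, a normal abelian subgroup of $G$ whose index in $G$ is at most $J_{{\bf GL}_m(k)}$, with $m$ given by \eqref{m}.

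The crucial point is that $m$ is determined by $X$ alone and does not depend on the choice of $G$, so the single constant $d:=J_{{\bf GL}_m(k)}$ simultaneously bounds the index of a normal abelian subgroup for every finite subgroup of ${\rm Aut}(X)$. This is exactly the condition in Definition \ref{Jd}, and it yields at once that ${\rm Aut}(X)$ is Jordan together with the estimate $J_{{\rm Aut}(X)}\leqslant J_{{\bf GL}_m(k)}$.

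The only point requiring care is that $m$ be finite, so that $J_{{\bf GL}_m(k)}$ is a genuine bound. I would verify this by covering $X$ with finitely many affine open sets, each of which embeds as a closed subvariety of some ${\mathbf A}^{N_i}$; then $\dim {\rm T}_{x, X}\leqslant N_i$ for $x$ in the $i$th chart, so $m\leqslant \max_i N_i<\infty$ and the maximum in \eqref{m} is attained. Granting this, there is no serious obstacle: the entire geometric content has already been packaged into Theorem \ref{aaa}, and the corollary merely records that the index bound furnished there is uniform in $G$.
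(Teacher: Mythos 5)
Your proof is correct and follows exactly the route the paper intends: Corollary \ref{fp} is an immediate consequence of Theorem \ref{aaa} applied to each finite subgroup of ${\rm Aut}(X)$, since the bound $J_{{\bf GL}_m(k)}$ depends only on $X$ and so realizes the constant $d$ required by Definition \ref{Jd}. Your extra check that $m$ in \eqref{m} is finite (via a finite affine cover) is a sensible precaution but does not change the argument, which the paper leaves implicit for exactly this reason.
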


\begin{corollary}\label{ccccccc} Let $p$ be a prime number.\;Then every finite
$p$-subgroup $\,G$ of
${\rm Aut}({\bf A}\!^n)$ contains an abelian normal subgroup
whose index in $\,G$ is at most $J_{{\bf GL}_n(k)}$.
\end{corollary}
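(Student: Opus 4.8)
The plan is to derive this as an immediate consequence of Theorem \ref{aaa} once one knows that every finite $p$-subgroup of ${\rm Aut}(\mathbf{A}^n)$ fixes a point. Since $\mathbf{A}^n$ is smooth, its tangent space has dimension $n$ at every point, so the quantity $m$ in \eqref{m} equals $n$ for $X=\mathbf{A}^n$. Thus, as soon as a finite subgroup $G\subseteq{\rm Aut}(\mathbf{A}^n)$ has a fixed point, Theorem \ref{aaa} already produces a normal abelian subgroup of index at most $J_{\mathbf{GL}_n(k)}$, which is exactly the assertion. The entire difficulty is therefore concentrated in one geometric statement: \emph{every finite $p$-subgroup $G$ of ${\rm Aut}(\mathbf{A}^n)$ has a fixed point on $\mathbf{A}^n$}. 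In contrast with the case of affine (linear) actions, where averaging a point over its $G$-orbit manufactures a fixed point, here $G$ consists of arbitrary polynomial automorphisms and no such averaging is available; this is where the hypothesis that $G$ is a $p$-group becomes essential.

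To prove the fixed-point statement I would first reduce to $k=\mathbf{C}$ by the Lefschetz principle. The fixed locus $(\mathbf{A}^n)^G$ is the closed subscheme of $\mathbf{A}^n$ cut out by the finitely many polynomial equations $g\cdot x=x$, $g\in G$, and all the coefficients involved lie in a finitely generated subfield $k_0\subseteq k$, which embeds into $\mathbf{C}$. Nonemptiness of the scheme $(\mathbf{A}^n)^G$ is a first-order statement in the language of fields, so by completeness of the theory of algebraically closed fields of characteristic zero (equivalently, by the Nullstellensatz over $\overline{k_0}$) it holds over $k$ if and only if it holds over $\mathbf{C}$. Hence it suffices to produce a fixed point when the ground field is $\mathbf{C}$.

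Over $\mathbf{C}$ the group $G$ acts on $\mathbf{C}^n$ by biholomorphisms, in particular by homeomorphisms of a contractible, hence $\mathbf{F}_p$-acyclic, finite-dimensional space. The key input is then P.\,A.\,Smith's fixed-point theorem: a finite $p$-group acting on such a space has fixed-point set with the $\mathbf{F}_p$-cohomology of a point, in particular nonempty. Since the topological fixed points of $G$ on $\mathbf{C}^n$ are precisely the $\mathbf{C}$-points of the algebraic fixed locus $(\mathbf{A}^n)^G$, this nonemptiness is exactly what the reduction of the previous paragraph requires, and the fixed point descends back to arbitrary $k$.

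The main obstacle is the fixed-point existence, and within it the careful application of Smith theory: the ambient space $\mathbf{C}^n$ is noncompact, so one must invoke the version of Smith's theorem valid for finite-dimensional (finitistic) paracompact spaces carrying a smooth finite group action, rather than the compact statement, and check that $\mathbf{C}^n$ with its $G$-action meets those hypotheses. The Lefschetz-principle descent is routine once the fixed locus is exhibited as an explicit closed subscheme, and the identification of topological with algebraic fixed points is immediate. With a fixed point of the $p$-group secured, the bound $J_{\mathbf{GL}_n(k)}$ follows verbatim from Theorem \ref{aaa}.
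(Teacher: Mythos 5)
Your proposal is correct, and its overall architecture coincides with the paper's: identify $m=n$ in \eqref{m} (since ${\bf A}\!^n$ is smooth), and then feed the fixed-point statement for finite $p$-subgroups of ${\rm Aut}({\bf A}\!^n)$ into Theorem \ref{aaa}. The difference lies entirely in how the fixed point is obtained. The paper does not prove it at all: it cites \cite[Thm.\;1.2]{Se3}, whose proof is arithmetic in the spirit of that paper's title --- one spreads the action out over a finitely generated ${\bf Z}$-algebra, reduces modulo a suitable maximal ideal with finite residue field ${\bf F}_q$ of characteristic $\ell\neq p$, and counts points: $|{\bf A}\!^n({\bf F}_q)|=q^n$ is prime to $p$ while every $G$-orbit has $p$-power size, so a fixed point exists over the residue field, and nonemptiness of the fixed-point scheme then lifts back to characteristic zero. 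You instead reduce to $k={\bf C}$ by the Lefschetz principle and invoke classical Smith theory for a finite $p$-group acting on a contractible, finite-dimensional (finitistic, paracompact) space; you correctly flag that the noncompactness of ${\bf C}^n$ forces the finitistic version of Smith's theorem, and your reduction via the Nullstellensatz is sound, as is the identification of topological with algebraic fixed points. Both routes are valid: Serre's is purely algebraic, avoids transcendental input, and works uniformly over any algebraically closed field of characteristic $\neq p$; yours is topological but generalizes immediately from ${\bf A}\!^n$ to any mod-$p$ acyclic complex variety --- which is precisely the extension the paper records in the remark following the corollary, there justified by the \'etale analogue of Smith theory in \cite[Sect.\;7--8]{Se3}.
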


\begin{proof}
 This follows from Theorem \ref{aaa} since in this case $({\bf A}\!^n)^G\neq \varnothing$, see \cite[Thm.\;1.2]{Se3}.
\quad $\square$ \end{proof}

\begin{remark}{\rm At this writing (October 2013), it is unknown whether or not
$({\bf A}\!^{n})^G\neq\varnothing$ for every
finite subgroup $\,G$ of ${\rm Aut}({\bf A}\!^{n})$.\;By Theorem
\ref{aaa} the affirmative answer would imply that ${\rm Aut}({\bf A}\!^{n})$ is Jordan
(cf.\;Subsection \eqref{as}).
}
\end{remark}

\begin{remark} {\rm The statement
of Corollary \ref{ccccccc}
remains true if ${\bf A}\!^n$ is replaced by any $p$-acyclic variety $X$,
and $n$ in $J_{{\bf GL}_n(k)}$ is replaced by   $m$ (see \eqref{m}).
This is because
in this case
 $X^G\neq \varnothing$ for every finite $p$-subgroup $G$ of ${\rm Aut}(X)$,
see \cite[Sect.\;7--8]{Se3}.}
\end{remark}

The following applications are obtained by combining the above idea with Theorem\;\ref{JJJJ}.

\begin{theorem}\label{equivvv}
Let $X$ be an irreducible variety.\;Consider an ${\rm Aut}(X)$-stable
equiva\-lence relation $\sim$
on the set its points.\;If there is a finite equivalence class $C$ of $\sim$, then
${\rm Aut}(X)$ is Jordan and
\begin{equation*}
J_{{\rm Aut}(X)}\leqslant |C|! J_{{\bf GL}_m(k)}^{|C|!},
\end{equation*}
where $m$ is defined by {\rm\eqref{m}}.
\end{theorem}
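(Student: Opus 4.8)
The plan is to exhibit ${\rm Aut}(X)$ as an extension of a bounded group by a Jordan group and then invoke Theorem \ref{JJJJ}, exactly as the sentence preceding the statement anticipates. The finite class $C$ furnishes both layers at once: the permutation action on its $|C|$ points produces the bounded quotient, while the subgroup fixing $C$ pointwise consists of automorphisms sharing a common fixed point, to which the fixed-point method of Theorem \ref{aaa} applies.

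First I would use that, $\sim$ being ${\rm Aut}(X)$-stable, the group ${\rm Aut}(X)$ permutes the $\sim$-classes; restricting this action to the finite class $C$ gives a homomorphism $\rho\colon {\rm Aut}(X)\to {\rm Sym}(C)$, where $|{\rm Sym}(C)|=|C|!$. Put $H:={\rm Ker}\,\rho$, the subgroup fixing every point of $C$. Then $H$ is normal in ${\rm Aut}(X)$, and ${\rm Aut}(X)/H$ embeds into ${\rm Sym}(C)$; hence ${\rm Aut}(X)/H$ is finite, so in particular bounded with $b^{}_{{\rm Aut}(X)/H}\leqslant |C|!$.

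The substantive step is to show that $H$ is Jordan with $J_H\leqslant J_{{\bf GL}_m(k)}$, where $m$ is given by \eqref{m}. By construction each element of $H$ fixes every point of $C$, so every finite subgroup of $H$ has a fixed point in $X$; Theorem \ref{aaa} then supplies, inside each such finite subgroup, a normal abelian subgroup of index at most $J_{{\bf GL}_m(k)}$. By Definition \ref{Jd} this is exactly the statement that $H$ is Jordan with $J_H\leqslant J_{{\bf GL}_m(k)}$ (the mechanism already isolated in Corollary \ref{fp}).

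Finally, feeding the normal subgroup $H$ into Theorem \ref{JJJJ}---with $H$ Jordan and ${\rm Aut}(X)/H$ bounded---yields that ${\rm Aut}(X)$ is Jordan and
\[
J_{{\rm Aut}(X)}\leqslant b^{}_{{\rm Aut}(X)/H}\,J_H^{\,b^{}_{{\rm Aut}(X)/H}}\leqslant |C|!\,J_{{\bf GL}_m(k)}^{|C|!},
\]
the last inequality holding by monotonicity of $d\mapsto d\,J^{d}$ in $d$ and in $J$, together with $b^{}_{{\rm Aut}(X)/H}\leqslant |C|!$, $J_H\leqslant J_{{\bf GL}_m(k)}$, and $J_{{\bf GL}_m(k)}\geqslant 1$. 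I expect the only genuinely delicate point to be the uniformity underlying the two bounds: that the permutation action truly factors through the finite group ${\rm Sym}(C)$ (so that the quotient is bounded by $|C|!$ independently of the finite subgroup), and that the constant $J_{{\bf GL}_m(k)}$ coming out of Theorem \ref{aaa} is independent of the chosen finite subgroup, which it is because $m$ in \eqref{m} depends only on $X$. Once these are in place the remainder is index bookkeeping.
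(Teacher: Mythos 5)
Your proposal is correct and follows essentially the same route as the paper: your $H={\rm Ker}\,\rho$ is exactly the paper's kernel $K$ of the action of ${\rm Aut}(X)$ on $C$, the fixed-point argument via Theorem \ref{aaa} (equivalently Corollary \ref{fp}) gives $J_H\leqslant J_{{\bf GL}_m(k)}$, and Theorem \ref{JJJJ} with the index bound $[{\rm Aut}(X):H]\leqslant |C|!$ yields the stated estimate. The only cosmetic difference is that you spell out the monotonicity bookkeeping $b\,J^b\leqslant |C|!\,J_{{\bf GL}_m(k)}^{|C|!}$, which the paper leaves implicit.
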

\begin{proof} By the assumption, every equivalence class of $\sim$ is
 ${\rm Aut}(X)$-stable. The kernel $K$ of the action of ${\rm Aut}(X)$ on $C$ is a normal subgroup
 of ${\rm Aut}(X)$ and, since the elements of ${\rm Aut}(X)$ induce permutations of $C$,
 \begin{equation}\label{C!}
 [{\rm Aut}(X)\!:\!K]\leqslant |C|!.
 \end{equation}
  By Theorem \ref{JJJJ}, Jordaness of ${\rm Aut}(X)$
 follows from  that of $K$.\;To prove that the latter holds, take a point of $x\in C$.\;Since $x$ is fixed by every finite subgroup of $K$,
Theorem \ref{aaa} implies that $K$ is Jordan and $J_K\leqslant J_{{\bf GL}_m(k)}$.
By Theorem\;\ref{JJJJ}, this and \eqref{C!} imply the claim.
\quad $\square$ \end{proof}

\begin{example}\label{local}
Below are several examples of ${\rm Aut}(X)$-stable equivalence relations on
an irreducible variety $X$:
\begin{enumerate}[\hskip 4.2mm \rm(i)]

\item $x\sim y\!\!\iff\!\! \mbox{$\mathcal O_{x,X}$ and $\mathcal O_{y,X}$ are $k$-isomorphic}$;
\item $x\sim y\!\!  \iff\!\!  \dim {\rm T}_{x, X}=\dim {\rm T}_{y, X}$;
\item $x\sim y\!\!  \iff \!\! \mbox{the tangent cones of $\,X$ at $\,x$ and $y$ are isomorphic}$.\quad $\square$
\end{enumerate}
\end{example}

\begin{corollary}\label{lr}
If an irreducible variety $X$ has a point $x$ such that the set
\begin{equation*}
\{y\in X\mid \mbox{$\mathcal O_{x,X}$ and $\mathcal O_{y,X}$ are $k$-isomorphic}\}
 \end{equation*}
 is finite, then ${\rm Aut}(X)$ is Jordan.
\end{corollary}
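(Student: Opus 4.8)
The plan is to recognize the displayed set as a single equivalence class of an ${\rm Aut}(X)$-stable equivalence relation and then quote Theorem \ref{equivvv}. Specifically, let $\sim$ be the relation on the points of $X$ given by $y\sim z\iff\mathcal O_{y,X}$ and $\mathcal O_{z,X}$ are $k$-isomorphic; this is reflexive, symmetric, and transitive, so it is an equivalence relation, and the set in the statement is precisely the equivalence class $C$ of the distinguished point $x$.

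First I would check that $\sim$ is ${\rm Aut}(X)$-stable, which is exactly the content of Example \ref{local}(i). For any $g\in{\rm Aut}(X)$ and any point $y$, the comorphism of $g$ furnishes a $k$-algebra isomorphism $\mathcal O_{g(y),X}\xrightarrow{\sim}\mathcal O_{y,X}$, so $\mathcal O_{y,X}$ and $\mathcal O_{g(y),X}$ are always $k$-isomorphic. Hence if $y\sim z$, then chaining isomorphisms gives $\mathcal O_{g(y),X}\cong\mathcal O_{y,X}\cong\mathcal O_{z,X}\cong\mathcal O_{g(z),X}$, so $g(y)\sim g(z)$, as required. By hypothesis $C$ is finite, so $\sim$ admits a finite equivalence class, and Theorem \ref{equivvv} applies verbatim, yielding that ${\rm Aut}(X)$ is Jordan (indeed $J_{{\rm Aut}(X)}\leqslant|C|!\,J_{{\bf GL}_m(k)}^{|C|!}$, with $m$ as in \eqref{m}).

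There is no genuine obstacle here: the statement is a direct corollary, and the only point requiring comment is the ${\rm Aut}(X)$-stability of $\sim$, which is immediate once one observes that an automorphism of $X$ induces isomorphisms of local rings. All of the substance has already been packaged into Theorem \ref{equivvv}, and through it into Theorem \ref{aaa} and the fixed-point Lemma \ref{linea}.
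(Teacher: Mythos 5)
Your proof is correct and follows exactly the route the paper intends: Corollary \ref{lr} is stated there as an immediate consequence of Theorem \ref{equivvv} applied to the equivalence relation of Example \ref{local}(i), whose ${\rm Aut}(X)$-stability you verify via the induced isomorphisms of local rings. Your explicit check of stability and the bound $J_{{\rm Aut}(X)}\leqslant|C|!\,J_{{\bf GL}_m(k)}^{|C|!}$ merely spell out what the paper leaves implicit.
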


Call a point $x\in X$ a {\it vertex} of $X$ if
\begin{equation*}
\dim {\rm T}_{x, X}\geqslant \dim {\rm T}_{y, X}\hskip 2mm\mbox{for every point}\hskip 2mm y\in X.
\end{equation*}
Thus every point of $X$ is a vertex of $X$ if and only if
$X$ is smooth.
\begin{corollary} \label{nonsmooth}
The automorphism group of every irreducible variety
with only finitely many vertices is Jordan.
\end{corollary}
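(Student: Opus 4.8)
The plan is to exhibit the set of vertices of $X$ as a finite equivalence class of one of the ${\rm Aut}(X)$-stable equivalence relations from Example \ref{local}, and then to quote Theorem \ref{equivvv}. Set $m=\underset{x}{\max}\,\dim {\rm T}_{x, X}$ as in \eqref{m}; by definition a point $x\in X$ is a vertex precisely when $\dim {\rm T}_{x, X}=m$, so the set $C$ of all vertices is exactly the locus where the tangent dimension attains its maximal value.

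First I would note that $C\neq\varnothing$: the function $x\mapsto\dim {\rm T}_{x, X}$ takes only finitely many integer values on $X$, so its maximum $m$ is attained. By hypothesis $C$ is finite.

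Next I would identify $C$ with a single equivalence class of the relation $\sim$ of Example \ref{local}(ii), namely $x\sim y\iff \dim {\rm T}_{x, X}=\dim {\rm T}_{y, X}$: indeed $C$ is precisely the class of those points whose tangent dimension equals $m$. This $\sim$ is ${\rm Aut}(X)$-stable because every $g\in{\rm Aut}(X)$ induces a linear isomorphism ${\rm T}_{x, X}\xrightarrow{\ \sim\ }{\rm T}_{g(x), X}$ and hence preserves tangent-space dimensions, so $x\sim y$ forces $g(x)\sim g(y)$.

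Having produced a finite equivalence class $C$ of an ${\rm Aut}(X)$-stable equivalence relation, I would apply Theorem \ref{equivvv} verbatim to conclude that ${\rm Aut}(X)$ is Jordan, with the quantitative bound $J_{{\rm Aut}(X)}\leqslant |C|!\,J_{{\bf GL}_m(k)}^{|C|!}$. There is essentially no obstacle to overcome here: the entire content is the reduction just described, and the only point requiring (immediate) verification is the ${\rm Aut}(X)$-stability of $\sim$, which follows from the functoriality of the tangent space under isomorphisms.
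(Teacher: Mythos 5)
Your proposal is correct and matches the paper's intended argument: Corollary \ref{nonsmooth} is stated as an immediate consequence of Theorem \ref{equivvv} applied to the ${\rm Aut}(X)$-stable equivalence relation (ii) of Example \ref{local}, whose class of points with $\dim {\rm T}_{x,X}=m$ is exactly the (nonempty, finite) set of vertices. Your additional verifications (attainment of the maximum, stability of $\sim$ under automorphisms) are exactly the routine checks the paper leaves implicit.
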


\begin{corollary}\label{sssing}  The automorphism group of every nonsmooth irreducible
variety with only finitely many  singular points is Jordan.
\end{corollary}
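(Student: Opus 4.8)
The plan is to deduce this directly from Corollary \ref{nonsmooth}, which asserts that $\mathrm{Aut}(X)$ is Jordan whenever the irreducible variety $X$ has only finitely many vertices. Thus it suffices to show that a nonsmooth irreducible variety with only finitely many singular points has only finitely many vertices; in fact I expect every vertex to be a singular point in this situation, which reduces the claim to a trivial counting argument.

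First I would recall the dichotomy of tangent space dimensions. Writing $n := \dim X$, one has $\dim \mathrm{T}_{x,X} = n$ at every smooth point $x$, whereas $\dim \mathrm{T}_{x,X} > n$ at every singular point. Since $X$ is nonsmooth, it possesses at least one singular point, so the maximum $m = \max_x \dim \mathrm{T}_{x,X}$ appearing in \eqref{m} satisfies $m > n$.

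Next I would observe that the set of vertices, namely $\{x \in X : \dim \mathrm{T}_{x,X} = m\}$, is contained in the singular locus of $X$. Indeed, any smooth point has tangent dimension $n < m$ and therefore cannot be a vertex, so every vertex is singular. By hypothesis the singular locus is finite, whence $X$ has only finitely many vertices. Applying Corollary \ref{nonsmooth} then yields that $\mathrm{Aut}(X)$ is Jordan.

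There is no genuine obstacle here: the entire content is the elementary remark that in the nonsmooth case the maximal tangent space dimension is attained only at singular points, so the hypothesis ``finitely many singular points'' immediately forces ``finitely many vertices,'' at which point the previously established corollary applies verbatim. The only point requiring care is the standard fact that tangent space dimension strictly exceeds $\dim X$ exactly at the singular points, which guarantees $m > n$ and hence that smooth points are excluded from the vertex set.
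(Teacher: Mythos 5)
Your proof is correct and is precisely the argument the paper leaves implicit: since $X$ is nonsmooth, the maximal tangent space dimension $m$ exceeds $\dim X$, so every vertex lies in the (finite) singular locus, and Corollary \ref{nonsmooth} applies. This matches the paper's intended derivation of Corollary \ref{sssing} from Corollary \ref{nonsmooth}.
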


\begin{corollary}\label{connn}
Let
$X\subset {\bf A}^{\hskip -.4mm n}$ be the affine cone of a smooth closed proper
irreducible subvariety $Z$ of $\,{\bf P}^{n-1}$
that does not lie in any hyperplane.
Then
${\rm Aut}(X)$ is Jordan.
\end{corollary}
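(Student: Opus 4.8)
The plan is to reduce to Corollary \ref{sssing} by proving that $X$ is nonsmooth with exactly one singular point, its apex $0\in{\bf A}^{\hskip -.4mm n}$. Since $Z$ is irreducible, so is its affine cone $X$; hence once the singular locus of $X$ is shown to be the single point $\{0\}$, Corollary \ref{sssing} immediately yields that ${\rm Aut}(X)$ is Jordan. (Equivalently, one could check that $0$ is the unique vertex of $X$ and apply Corollary \ref{nonsmooth}.)

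First I would establish that $X\setminus\{0\}$ is smooth. The canonical projection ${\bf A}^{\hskip -.4mm n}\setminus\{0\}\to{\bf P}^{n-1}$ restricts to a surjective morphism $\rho\colon X\setminus\{0\}\to Z$ whose fibre over a point of $Z$ is the corresponding punctured line $k^\ast$. This realizes $X\setminus\{0\}$ as the complement of the zero section in the total space of the restriction to $Z$ of the tautological line bundle on ${\bf P}^{n-1}$, that is, as a $k^\ast$-bundle over $Z$. As line bundles are Zariski-locally trivial and $Z$ is smooth, $X\setminus\{0\}$ is smooth.

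Next I would show that $0$ is singular, and this is where both hypotheses on $Z$ are used. Being a cone, $X$ has homogeneous ideal $I(X)\subseteq k[x_1,\ldots,x_n]$, and nondegeneracy of $Z$ says exactly that no nonzero linear form vanishes on $X$, i.e.\ the degree-one part of $I(X)$ is zero. Hence no element of $I(X)$ contributes to $\mathfrak m_{0,X}/\mathfrak m_{0,X}^2$, giving $\dim{\rm T}_{0,X}=n$. On the other hand $\dim X=\dim Z+1$, and since $Z$ is a proper subvariety of ${\bf P}^{n-1}$ we have $\dim Z\leqslant n-2$, so $\dim X\leqslant n-1<n=\dim{\rm T}_{0,X}$. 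Thus $0$ is a singular point, and by the previous paragraph it is the only one. This completes the reduction.

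The one step demanding care---the ``main obstacle,'' though a mild one---is the tangent-space count $\dim{\rm T}_{0,X}=n$ at the apex, together with tracking the roles of the two hypotheses: nondegeneracy forces ${\rm T}_{0,X}$ to be the whole ambient space, while properness keeps $\dim X$ strictly below $n$. Both are genuinely needed: without properness one could have $Z={\bf P}^{n-1}$ and $X={\bf A}^{\hskip -.4mm n}$, which is smooth everywhere, and a degenerate $Z$ would shrink ${\rm T}_{0,X}$ and invalidate the count.
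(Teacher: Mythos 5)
Your proof is correct and takes essentially the same approach as the paper: reduce to Corollary \ref{sssing} by showing that the singular locus of $X$ is exactly the origin. The paper asserts this in a single line, and your $k^\ast$-bundle argument away from the apex together with the tangent-space count $\dim{\rm T}_{0,X}=n>\dim X$ at the apex supplies precisely the details it leaves implicit.
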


\begin{proof} The assumptions imply that the singular locus of $X$ consists of a single point, the origin; whence the claim by Corollary \ref{sssing}.
\quad $\square$ \end{proof}

\begin{corollary}\label{coooone}
\noindent If an irreducible variety $X$ has a point $x$ such that there are only finitely many points
$y\in X$ for which the tangent cones of $X$ at $x$ and at $y$ are isomorphic,
then
${\rm Aut}(X)$ is Jordan.
\end{corollary}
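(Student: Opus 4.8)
The plan is to obtain this corollary as a direct instance of Theorem \ref{equivvv}, using the tangent-cone equivalence relation already recorded in Example \ref{local}. Concretely, I would take the relation $\sim$ on the points of $X$ defined by declaring $x\sim y$ precisely when the tangent cones of $X$ at $x$ and at $y$ are isomorphic; this is item (iii) of Example \ref{local}, where it is listed as an ${\rm Aut}(X)$-stable equivalence relation.

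First I would note why $\sim$ is indeed ${\rm Aut}(X)$-stable, since this is the only point that is not purely formal. Any $g\in{\rm Aut}(X)$ carries the local ring $\mathcal O_{x,X}$ isomorphically onto $\mathcal O_{g(x),X}$, respecting maximal ideals; passing to associated graded rings shows that the tangent cone of $X$ at $g(x)$ is isomorphic to the one at $x$. Hence if $x\sim y$, then the tangent cones at $g(x)$ and $g(y)$, being isomorphic to those at $x$ and at $y$ respectively, are isomorphic to each other, so $g(x)\sim g(y)$. This is exactly what Example \ref{local}(iii) asserts, so in the write-up I would simply cite it rather than reprove it.

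Next I would observe that the equivalence class of the distinguished point $x$ under $\sim$ is by definition the set
\begin{equation*}
C:=\{\,y\in X\mid\text{the tangent cones of } X \text{ at } x \text{ and at } y \text{ are isomorphic}\,\},
\end{equation*}
which the hypothesis declares to be finite. Thus $\sim$ possesses a finite equivalence class, and Theorem \ref{equivvv} applies verbatim to give that ${\rm Aut}(X)$ is Jordan, with the explicit bound $J_{{\rm Aut}(X)}\leqslant |C|!\,J_{{\bf GL}_m(k)}^{|C|!}$, where $m$ is as in \eqref{m}.

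There is essentially no obstacle here: the corollary is a repackaging of Theorem \ref{equivvv} specialized to relation (iii), in complete parallel with how Corollaries \ref{lr}, \ref{nonsmooth}, and \ref{sssing} follow from relations (i) and (ii). The only substantive ingredient is the ${\rm Aut}(X)$-stability of the tangent-cone relation, and that is the functoriality of the tangent cone under the local isomorphisms induced by automorphisms, which Example \ref{local}(iii) already supplies. Consequently the entire argument reduces to identifying $C$ as the $\sim$-class of $x$ and invoking Theorem \ref{equivvv}.
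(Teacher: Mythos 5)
Your proposal is correct and is exactly the argument the paper intends: the corollary is an immediate instance of Theorem \ref{equivvv} applied to the ${\rm Aut}(X)$-stable equivalence relation of Example \ref{local}(iii), with the hypothesized finite set serving as the finite equivalence class $C$. Your verification of the ${\rm Aut}(X)$-stability via the induced isomorphisms of local rings and their associated graded rings is the right justification for what the paper records without proof in Example \ref{local}(iii).
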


\begin{remark}{\rm  Smoothness in Corollary \ref{connn}  may be replaced by the assumption that $Z$ is not a cone.\;Indeed, in this case the origin constitutes a single  equivalence class
of equivalence relation (iii) in Example \ref{local}; whence the claim by Corollary \ref{coooone}.}
\end{remark}

\subsubsection{The Koras--Russell threefolds} Let  $X=X_{d,s,l}$ be the so-called Koras--Russell threefold of the first kind \cite{M-J}, i.e.,\,the smooth hypersurface in ${\bf A}^{\hskip -.5mm 4}$ defined by the equation
\begin{equation*}
x_1^dx_2+x_3^s+x_4^l+x_1=0,
\end{equation*}
where $d\geqslant 2$ and $2\leqslant s\leqslant l$ with $s$ and $l$ relatively prime; the case $d=s=2$ and $l=3$ is 
the famous Koras--Russell cubic.\;According
 to \cite[Cor.\;6.1]{M-J}, every element of ${\rm Aut}(X)$ fixes the origin $(0,0,0,0)\in X$.\;By
 Corollary
 \ref{fp} and item (iii) of Subsection \ref{JJJJJ}
this implies that ${\rm Aut}(X)$ is Jordan and
\begin{equation*}
J^{}_{{\rm Aut}(X)}\leqslant 360.
\end{equation*}

Actually, during the conference I learned from L.\;Moser-Jauslin that $X$ contains a line $\ell$ passing through the origin, stable with respect to ${\rm Aut}(X)$, and such that every element of ${\rm Aut}(X)$
fixing $\ell$ pointwise has infinite order.\;This implies that every finite subgroup of  ${\rm Aut}(X)$ is cyclic
and hence $$J^{}_{{\rm Aut}(X)}=1.$$

\subsubsection{Small dimensions}\label{smalld}

Since ${\rm Aut}(X)$ is a subgroup of ${\rm Bir}(X)$, Jordaness of ${\rm Bir}(X)$ implies that of ${\rm Aut}(X)$. This and Theorem \ref{CS} below yield the following
\begin{theorem} Let $X$ be an irreducible variety of dimension $\leqslant 2$ not birationally isomorphic to ${\bf P}^1\times E$, where $E$ is an elliptic curve.\;Then ${\rm Aut}(X)$ is Jordan.
\end{theorem}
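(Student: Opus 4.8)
The plan is to deduce this purely formally from the stated relationship between Jordaness of $\mathrm{Aut}(X)$ and of $\mathrm{Bir}(X)$, together with a birational classification result in dimension $\leqslant 2$. First I would observe the elementary inclusion $\mathrm{Aut}(X)\hookrightarrow\mathrm{Bir}(X)$, so that by Theorem \ref{gs}(1)(i) Jordaness of $\mathrm{Bir}(X)$ immediately forces Jordaness of $\mathrm{Aut}(X)$, with $J_{\mathrm{Aut}(X)}\leqslant J_{\mathrm{Bir}(X)}$. Thus the entire statement reduces to knowing that $\mathrm{Bir}(X)$ is Jordan for every irreducible $X$ of dimension $\leqslant 2$ that is not birational to ${\bf P}^1\times E$; this is exactly the content promised by the forward reference ``Theorem \ref{CS} below,'' so the proof is essentially a one-line invocation once that theorem is in place.

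Since $\mathrm{Bir}(X)$ depends only on the birational class of $X$ (it is determined by the function field $k(X)$), I would emphasize that the problem is genuinely birational, which is why the hypothesis is stated up to birational isomorphism. For $\dim X\leqslant 2$ the relevant input is the theorem of Serre (and Zarhin) on Jordaness of $\mathrm{Cr}_2=\mathrm{Bir}({\bf P}^2)$ recorded in Example \ref{Cr2}, extended over all surfaces via the classification of minimal models and the structure of their automorphism and birational groups. The single exceptional class ${\bf P}^1\times E$ arises precisely because $\mathrm{Bir}({\bf P}^1\times E)$ contains $\mathrm{PGL}_2\bigl(k(E)\bigr)$, which is \emph{not} Jordan: it admits finite subgroups isomorphic to $({\bf Z}/p)^2$ realized via $p$-torsion translations on $E$ combined with projective automorphisms, giving arbitrarily large finite subgroups with no bounded-index abelian part. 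Hence the exclusion is not a technical artifact but marks the genuine failure locus, consistent with the paper's theme that non-Jordan examples are rare.

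The step I expect to carry the real weight is of course Theorem \ref{CS} itself — the classification-theoretic proof that $\mathrm{Bir}(X)$ is Jordan for all surfaces outside the ${\bf P}^1\times E$ class — but since the problem permits me to assume any result stated earlier or promised in the excerpt, I treat that as a black box. Concretely, the proof I would write is:

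\begin{proof}
Since $\mathrm{Aut}(X)$ is a subgroup of $\mathrm{Bir}(X)$, Theorem \ref{gs}(1)(i) shows that Jordaness of $\mathrm{Bir}(X)$ implies that of $\mathrm{Aut}(X)$, with $J_{\mathrm{Aut}(X)}\leqslant J_{\mathrm{Bir}(X)}$. By the hypothesis $X$ is an irreducible variety of dimension $\leqslant 2$ not birationally isomorphic to ${\bf P}^1\times E$ for any elliptic curve $E$, so Theorem \ref{CS} applies and yields that $\mathrm{Bir}(X)$ is Jordan. The claim follows.
\quad $\square$
\end{proof}

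The only subtlety worth flagging is to confirm that the birational hypothesis is the correct one to feed into Theorem \ref{CS}: because $\mathrm{Bir}(X)\cong\mathrm{Bir}(X')$ whenever $X$ and $X'$ are birationally isomorphic, the exclusion must be stated up to birational equivalence, and no smoothness or projectivity assumption on $X$ itself is needed.
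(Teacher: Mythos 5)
Your proof is correct and is essentially the paper's own argument: the paper derives this theorem in exactly the same way, from the inclusion ${\rm Aut}(X)\subseteq {\rm Bir}(X)$, the subgroup property of Theorem \ref{gs}(1)(i), and Theorem \ref{CS}. Your closing remark that the hypothesis is correctly stated up to birational equivalence, with no smoothness or projectivity assumptions needed, is also right, since ${\rm Bir}(X)$ depends only on $k(X)$.

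One side remark in your discussion, however, is false and contradicts the paper. You claim the exceptional class ${\bf P}^1\times E$ arises because ${\bf PGL}_2\bigl(k(E)\bigr)$ is not Jordan; in fact ${\bf PGL}_2\bigl(k(B)\bigr)$ \emph{is} Jordan for any curve $B$ (it is a linear group, and the paper uses precisely this, via Theorem \ref{lg}, to handle nonrational ruled surfaces with $g(B)\geqslant 2$). Moreover, the subgroups you describe, isomorphic to $({\bf Z}/p)^2$, are abelian and so could never witness non-Jordaness. The genuine obstruction for ${\rm Bir}({\bf P}^1\times E)$ is Zarhin's theta-group construction (Theorem \ref{nJ} and Example \ref{Zar}): the offending finite subgroups are nonabelian Heisenberg-type groups mixing translations by torsion points of $E$ with multiplication by rational functions, and they do not lie in ${\bf PGL}_2\bigl(k(E)\bigr)={\rm Bir}_B(X)$ at all. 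Equivalently, in the decomposition \eqref{semi} both factors are Jordan, but ${\rm Aut}(E)$ is unbounded, so Theorem \ref{JJJJ} cannot be applied; this does not affect the validity of your proof, which never uses the erroneous claim.
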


Note that if $E$ is an elliptic curve and $X={\bf P}^1\times E$, then
${\rm Aut}(X)={\bf PGL}_2(k)\times {\rm Aut}(E)$, see \cite[pp.\;98--99]{Mar}.\;Fixing a point of $E$, endow $E$ with a structure of abelian variety $E_{\rm ab}$.
Since ${\rm Aut}(E)$ is an extension of the finite group ${\rm Aut}(E_{\rm ab})$
by the abelian group $E_{\rm ab}$, Theorems \ref{lg}, \ref{JJJJ}, and \ref{gs}(2)
imply that
 ${\rm Aut}({\bf P}^1\times E)$ is Jordan.

 Note also that all irreducible curves (not necessarily smooth and projective) whose
automorphism group is infinite are classified in~\cite{Po3}.

\subsubsection{Non-uniruled varieties}

Again, using that Jordaness of ${\rm Bir}(X)$ implies that of
${\rm Aut}(X)$,  we deduce from recent Theorem \ref{uniru}(i)(a) below the following
\begin{theorem}
${\rm Aut}(X)$ is Jordan for any irreducible non-uniruled variety\;$X$.
\end{theorem}

\subsection{Groups \boldmath ${\rm Bir}(X)$}\label{Birrr} Now we shall consider
Problem B (see Subsection \ref{se}).\;Exploring  ${\rm Bir}(X)$, one may, maintaining this group,
replace $X$ by any variety birationally isomorphic to $X$.\;Note that
by Theorem \ref{neighb}
one can always attain that after such a replacement ${\rm Aut}(X)$ becomes Jordan.

The counterpart of Question \ref{Jord} is
\begin{question}[{{\rm \cite[Quest.\;2.31]{Po1}}}] \label{QB}
Is there an irreducible variety $X$
such that ${\rm Bir}(X)$ is non-Jordan{\rm ?}
\end{question}

In contrast to the case of Question\;\ref{Jord}, at present we know the answer to
Question\;\ref{QB}:
motivated by my question,
Yu.\;Zarhin proved in
\cite{Z} the following
\begin{theorem}[{{\rm \cite[Cor.\,1.3]{Z}}}]\label{nJ} Let $X$ be an abelian variety of positive dimension and let $Z$ be a rational variety of positive dimension.\;Then ${\rm Bir}(X\times Z)$ is non-Jordan.
\end{theorem}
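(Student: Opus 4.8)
The plan is to exhibit, inside ${\rm Bir}(X\times Z)$, a sequence of finite subgroups whose deviation from being ``almost abelian'' (Definition \ref{Jd}) grows without bound. Concretely, I would produce finite subgroups isomorphic to the Heisenberg-type groups $Q_K$ of Example \ref{Zar} with $|K|$ arbitrarily large: since those already satisfy property (ii) of Example \ref{Zar} (every abelian subgroup has index at least $|K|$), their presence forces non-Jordanness. The first step is a routine reduction to $Z=\mathbf{A}^1$. As ${\rm Bir}$ is a birational invariant and $Z$ is rational of dimension $m\geqslant 1$, one has ${\rm Bir}(X\times Z)\cong {\rm Bir}(X\times \mathbf{P}^m)\cong {\rm Bir}\big((X\times\mathbf{A}^1)\times\mathbf{A}^{m-1}\big)$, and $\phi\mapsto \phi\times{\rm id}_{\mathbf{A}^{m-1}}$ is an injective homomorphism ${\rm Bir}(X\times\mathbf{A}^1)\hookrightarrow {\rm Bir}(X\times Z)$. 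So it suffices to realize the relevant groups inside ${\rm Bir}(X\times\mathbf{A}^1)={\rm Bir}(X\times\mathbb{G}_m)$.

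The heart of the argument is a geometric realization via theta groups. Fix an ample line bundle $L_0$ on the abelian variety $X$ and, for each $n$, set $L=L_0^{\otimes n}$. Mumford's theta group $\mathcal{G}(L)$ sits in an exact sequence $1\to k^*\to \mathcal{G}(L)\to K(L)\to 0$, where $K(L)=\ker(\phi_L)$ is finite with $|K(L)|\to\infty$ as $n\to\infty$, and whose commutator pairing $K(L)\times K(L)\to k^*$ is the non-degenerate Weil pairing, taking values in roots of unity. An element of $\mathcal{G}(L)$ is a pair consisting of a torsion point $a$ and an isomorphism $t_a^*L\cong L$; it acts fiberwise-linearly on the total space of $L$ over the translation $t_a$, preserving the zero section, hence acts on the $\mathbb{G}_m$-bundle $L^\times$. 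Since line bundles are Zariski-locally trivial, $L^\times$ is birational to $X\times\mathbb{G}_m$, so we get a homomorphism $\mathcal{G}(L)\to {\rm Bir}(X\times\mathbb{G}_m)$, which is faithful because a nonzero translation and a nontrivial fiber scaling each act nontrivially. Choosing a Lagrangian (theta) structure, I would pass to the finite Heisenberg subgroup $\mathcal{H}_n\subset\mathcal{G}(L)$ with center $\mu_n$ surjecting onto $K(L)$; under the identification of the pairings, $\mathcal{G}(L)$ is Zarhin's group $G_K$ and $\mathcal{H}_n$ is its finite subgroup $Q_K$, with $K$ a maximal isotropic subgroup of $K(L)$ of order $\sqrt{|K(L)|}$.

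To conclude, note that the commutator pairing on $\mathcal{H}_n$ is the non-degenerate Weil pairing, so any abelian subgroup projects to an isotropic subgroup of $K(L)$, of order at most $\sqrt{|K(L)|}$; hence its index in $\mathcal{H}_n$ is at least $\sqrt{|K(L)|}$, which tends to $\infty$ as $n\to\infty$ (equivalently, apply property (ii) of Example \ref{Zar} to $Q_K$). Therefore, for every positive integer $d$ there is a finite subgroup of ${\rm Bir}(X\times Z)$ admitting no abelian subgroup, normal or not, of index at most $d$. By Definition \ref{Jd} this shows that ${\rm Bir}(X\times Z)$ is non-Jordan.

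The main obstacle is the faithful birational realization of the noncommutative Heisenberg relations: one must verify that the theta-group action genuinely descends to birational self-maps of $X\times\mathbb{G}_m$, that it is faithful, and above all that the commutator pairing coincides with the \emph{non-degenerate} Weil pairing on $K(L)$ — this non-degeneracy is exactly what pins the abelian subgroups to isotropic subgroups and forces their index to grow. This is where positivity of $\dim X$ and the abelian-variety structure (an ample line bundle with a large, symplectically self-dual group $K(L)$) are indispensable; by contrast the reduction to $\mathbf{A}^1$ and the final index estimate are routine once Example \ref{Zar} is in hand.
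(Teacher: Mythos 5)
Your proposal is correct and follows essentially the same route as the paper's sketch (which is Zarhin's argument): reduce to ${\rm Bir}(X\times\mathbf{A}^{1})$ via Theorem \ref{gs}(1)(i), embed the theta group of $L_0^{\otimes n}$ into ${\rm Bir}(X\times\mathbf{A}^{1})$ by its fiberwise action on the (birationally trivial) line bundle --- your bundle-theoretic description of this embedding coincides with the paper's explicit formula $(y,t)\mapsto(x+y,f(y)t)$ --- and then invoke the non-degenerate commutator pairing, i.e.\ property (ii) of Example \ref{Zar}, to get finite subgroups with no abelian subgroup of bounded index. The only cosmetic difference is that you pass to the finite Heisenberg subgroup $Q_K$ explicitly, whereas the paper identifies the whole theta group with $G_K$ and lets Example \ref{Zar} do that step.
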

\noindent{\it Sketch of proof.}
By Theorem \ref{gs}(1)(i),
it suffices to prove that ${\rm Bir}(X\!\times\! {\bf A}^{\hskip -.4mm 1})$ is non-Jordan.\;Consider an ample divisor $D$ on $X$ and the sheaf $L\!:=\!\mathcal O_X(D)$. For a positive integer $n$, consider  the
following group $\Theta(L^n)$.\;Its elements are all pairs $(x, [f])$ where $x\!\in\! X$ is such that $L^n \!\cong\! T_x^\ast (L^n)$ for the translation $T_x\colon X\!\to\! X$, $z\!\mapsto\! z+x$,
and $[f]$ is the automorphism of the additive group of $k(X)$ induced by the multiplication
by $f\!\in\! k(X)^\ast $.\;The group structure of $\Theta(L^n)$ is defined by $(x, [f])(y, [h])\!=\!(x+y, [T_x^\ast h\cdot f])$.\;One proves that $\Theta(L^n)$ enjoys the properties: (i)  $\varphi\colon
\Theta(L^n)\!\to\! {\rm Bir}(X\!\times\! {\bf A}^{\hskip -.4mm 1})$, $\varphi(x, [f])(y, t)\!=\!(x\!+\!y, f(y)t)$, is a group embedding; (ii) $\Theta(L^n)$ is isomorphic to a group $G_K$ from Example\;\ref{Zar} with $|K|\!\geqslant\! n$.\;This implies the claim (see Example\;\ref{Zar}).
\quad $\square$

\vskip 2mm

Below Problem B is solved for varieties of small dimensions ($\leqslant 2$).
 \subsubsection{Curves}
If $X$ is a curve, then the answer to Question \ref{QB}
is negative.

Proving this,
we may assume that
$X$ is smooth and projective; whence ${\rm Bir}(X)\!=\!{\rm Aut}(X)$.

If $g(X)$, the genus of $X$, is $0$,
then
$X={\bf P}^1$, so
${\rm Aut}(X)={\bf PGL}_2(k)$. Hence
${\rm Aut}(X)$ is Jordan by Theorem \ref{lg}.

If $g(X)=1$, then $X$ is an elliptic curve, hence
${\rm Aut}(X)$ is Jordan
 (see the penultimate paragraph in Subsection \ref{smalld}).

 If $g(X)\geqslant 2$, then, being finite, ${\rm Aut}(X)$ is Jordan.

\subsubsection{Surfaces}

Answering Question \ref{QB} for surfaces $X$, we may assume that $X$ is a smooth
projective minimal model.

If $X$ is of general type, then  by
Matsumura's
theorem
${\rm Bir}(X)$ is finite, hence Jordan.

If $X$ is rational, then ${\rm Bir}(X)$ is
${\rm Cr}_2$,
hence Jordan, see Example \ref{Cr2}.

If $X$ is a nonrational ruled surface, it is birationally isomorphic to ${\bf P}^1\times B$ where $B$ is a smooth projective curve
such that
$g(B)>0$; we may then take $X={\bf P}^1\times B$.\;Since
$g(B)>0$, there are no dominant rational maps ${\bf P}^1\dashrightarrow B$; whence
the elements of ${\rm Bir}(X)$ permute fibers of
the natural projection  ${\bf P}^1\times B\to B$.\;The set of ele\-ments
 inducing trivial permutation is a normal subgroup ${\rm Bir}^{}_B(X)$ of ${\rm Bir}(X)$.\;The definition implies that
 ${\rm Bir}^{}_B(X)={\bf PGL}_2(k(B)),
  $
  hence ${\rm Bir}^{}_B(X)$ is Jordan by Theorem \ref{lg}.\;Identifying ${\rm Aut}(B)$ with the subgroup of ${\rm Bir}(X)$ in the natural way, we get the decomposition
 \begin{equation}\label{semi}
 {\rm Bir}(X)={\rm Bir}^{}_B(X)\rtimes {\rm Aut}(B).
 \end{equation}
 If $g(B)\geqslant 2$,\;then ${\rm Aut}(B)$ is finite; whence
${\rm Bir}(X)$ is Jordan by virtue of \eqref{semi} and
Theorem\;\ref{JJJJ}.
If $g(B)\!=\!1$,
then  ${\rm Bir}(X)$ is non-Jordan by Theorem\;\ref{nJ}.

The canonical class of all the other surfaces $X$ is numerically effective, so, for them, ${\rm Bir}(X)={\rm Aut}(X)$, cf.\;\cite[Sect.\,7.1, Thm.\,1 and Sect.\;7.3, Thm.\;2]{IS}.

Let $X$ be such a surface.\;The group ${\rm Aut}(X)$ has a structure of a locally algebraic group with finite or countably many components,\;see \cite{Mat}, i.e.,  there is a normal subgroup ${\rm Aut}(X)^0$ in ${\rm Aut}(X)$ such that
\begin{enumerate}
\item[(i)]  ${\rm Aut}(X)^0$ is a connected algebraic group,
\item[(ii)]
${\rm Aut}(X)/{\rm Aut}(X)^0$ is either a finite or a countable group,
\end{enumerate}
By (i) and the structure theorem on algebraic groups \cite{Ba}, \cite{R1}
there is a normal connected affine algebraic subgroup $L$ of ${\rm Aut}(X)^0$ such that
${\rm Aut}(X)^0/L$ is an abelian variety.\;By \cite[Cor.\,1]{Matm} nontriviality of $L$ would imply that $X$ is ruled.\;Since we assumed that
$X$ is not ruled, this means that
${\rm Aut}(X)^0$ is an abelian variety.\;Hence ${\rm Aut}(X)^0$ is abelian and, a fortiori, Jordan.

By (i) the group ${\rm Aut}(X)^0$ is contained in the kernel of the natural action of ${\rm Aut}(X)$ on $H^2(X, {\bf Q})$ (we may assume that $k={\bf C})$.\;Therefore, this action defines a homomorphism  ${\rm Aut}(X)/{\rm Aut}(X)^0\to {\bf GL}(H^2(X, {\bf Q}))$.\;The kernel of this homomorphism is finite by
\cite[Prop.\,1]{Do}, and  the image is bounded by
Example \ref{bou}.\;By Examples \ref{bbbb}, \ref{ebb}  this yields that ${\rm Aut}(X)/{\rm Aut}(X)^0$ is bounded. In turn, since ${\rm Aut}(X)^0$ is Jordan, by Theorem \ref{JJJJ} this implies that ${\rm Aut}(X)$ is Jordan.

\vskip 2mm
\subsubsection{The upshot}


The upshot of the last two subsections is

 \begin{theorem}[{{\rm \cite[Thm.\;2.32]{Po1}}}]\label{CS} Let $X$ be an irreducible variety of dimension $\leqslant 2$.\;Then the following two properties are equivalent{\rm:}
 \begin{enumerate}
 \item[\rm(a)] the group ${\rm Bir}(X)$ is Jordan;
  \item[\rm(b)]  the variety $X$ is not birationally isomorphic to ${\bf P}^1\times B$, where $B$ is an elliptic curve.
      \end{enumerate}
\end{theorem}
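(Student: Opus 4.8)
The plan is to exploit that ${\rm Bir}(X)$ is a birational invariant, so we may replace $X$ by any birationally isomorphic variety and work with a convenient model. One implication is then immediate: if $X$ \emph{is} birationally isomorphic to ${\bf P}^1\times B$ with $B$ an elliptic curve, then ${\rm Bir}(X)\cong{\rm Bir}({\bf P}^1\times B)$ is non-Jordan by Theorem \ref{nJ} (take the rational factor ${\bf P}^1$ and the abelian variety $B$), so (a) fails. The substance therefore lies in proving that (b) implies (a), for which I would run through the birational classification of varieties of dimension $\leqslant 2$ and verify Jordaness in every case not excluded by (b).

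First I would dispose of dimensions $0$ and $1$. Passing to a smooth projective model makes ${\rm Bir}(X)={\rm Aut}(X)$, and the three genus ranges give respectively ${\bf PGL}_2(k)$ (Jordan by Theorem \ref{lg}), the automorphism group of an elliptic curve (an extension of a finite group by an abelian variety, hence Jordan by Theorems \ref{lg}, \ref{gs}(2), and \ref{JJJJ}), and a finite group. None of these is excluded by (b), so Jordaness holds throughout dimension $\leqslant 1$.

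For surfaces I would pass to a smooth projective minimal model and split into cases by Kodaira dimension and ruledness. Surfaces of general type have finite ${\rm Bir}(X)$ by Matsumura's theorem, hence Jordan; rational surfaces have ${\rm Bir}(X)={\rm Cr}_2$, Jordan by Example \ref{Cr2}; a nonrational ruled surface is birationally ${\bf P}^1\times B$ with $g(B)>0$, where the decomposition ${\rm Bir}(X)={\rm Bir}_B(X)\rtimes{\rm Aut}(B)$ with ${\rm Bir}_B(X)={\bf PGL}_2(k(B))$ Jordan (Theorem \ref{lg}) reduces matters to ${\rm Aut}(B)$. This is finite precisely when $g(B)\geqslant 2$, giving Jordaness via Theorem \ref{JJJJ}, while $g(B)=1$ is exactly the case excluded by (b) and is non-Jordan by Theorem \ref{nJ}; so the exception is accounted for with no overlap.

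The remaining surfaces (numerically effective canonical class, non-ruled) are where the real work sits, and I expect this to be the main obstacle. Here ${\rm Bir}(X)={\rm Aut}(X)$, and one analyzes the locally algebraic group structure of ${\rm Aut}(X)$: the identity component ${\rm Aut}(X)^0$ is a connected algebraic group, and by the Barsotti--Chevalley--Rosenlicht structure theorem it is an extension of an abelian variety by a connected affine algebraic subgroup $L$; non-ruledness of $X$ forces $L$ trivial, so ${\rm Aut}(X)^0$ is an abelian variety, hence abelian and a fortiori Jordan. For the component group ${\rm Aut}(X)/{\rm Aut}(X)^0$ one uses (over $k={\bf C}$) that it acts on $H^2(X,{\bf Q})$ with finite kernel and bounded image, the latter via Minkowski's boundedness of ${\bf GL}_n({\bf Z})$ recorded in Example \ref{bou}; together with Examples \ref{bbbb} and \ref{ebb} this makes the component group bounded. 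Theorem \ref{JJJJ} then assembles this bounded-by-abelian extension into Jordaness of ${\rm Aut}(X)$. The delicate points are the identification of ${\rm Aut}(X)^0$ as an abelian variety (showing that nontrivial $L$ would force $X$ ruled) and the boundedness of the component group; once these are in hand, the bounded-by-Jordan extension lemma finishes the argument, and the four surface cases together with the curve cases establish the equivalence.
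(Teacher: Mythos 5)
Your proposal is correct and follows essentially the same route as the paper: the non-Jordan direction via Theorem \ref{nJ}, curves by genus, and surfaces via a minimal model split into general type (Matsumura), rational (${\rm Cr}_2$), nonrational ruled (the semidirect decomposition \eqref{semi} with the $g(B)=1$ exception), and the nef-canonical-class case handled through ${\rm Aut}(X)^0$ being an abelian variety plus boundedness of the component group, assembled by Theorem \ref{JJJJ}. There is nothing to add; the case analysis and the key lemmas coincide with the paper's own proof.
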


\subsubsection{Finite and connected algebraic subgroups of $\,{\rm Bir}(X)$ and ${\rm Aut}(X)$}


Recall that the notions of
algebraic subgroup of $\,{\rm Bir}(X)$ and $\,{\rm Aut}(X)$ make sense, and every algebraic subgroup of $\,{\rm Aut}(X)$ is that of $\,{\rm Bir}(X)$, see,\;e.g., \cite[Sect.\,1]{P3}.\;Na\-me\-ly,
a map $\psi\colon S\to {\rm Bir}(X)$ of a variety $S$ is called an {\it algebraic family} if
the domain of definition of the partially defined map $\alpha\colon S\times X\to X$, $(s, x)\mapsto \psi(s)(x)$ contains a dense open subset of $S\times X$ and $\alpha$ coincides on it with a rational map $\varrho\colon S\times X\dashrightarrow X$.
The group $\,{\rm Bir}(X)$ is endowed with the {\it Zariski topo\-logy} \cite[Sect.\,1.6]{Se2}, in which a subset $Z$ of $\,{\rm Bir}(X)$ is closed if and only if $\psi^{-1}(Z)$ is closed in $S$ for every family $\psi$. If $S$ is an algebraic group and $\psi$ is an algebraic family which is a homomorphism of abstract groups, then
$\psi(S)$  is called an {\it algebraic subgroup of} $\;{\rm Bir}(X)$. In this case, ${\rm ker}(\psi)$ is closed in $S$
and the restriction to $\psi(S)$ of the Zariski topology of ${\rm Bir}(X)$ coincides with the topology
determined by the natural identification of $\psi(S)$ with the algebraic group $S/{\rm ker}(\psi)$.
If $\psi(S)\subset {\rm Aut}(X)$ and $\varrho$ is a morphism, then $\psi(S)$ is called an {\it algebraic subgroup of} $\,{\rm Aut}(X)$.

The following reveals a relation between embeddabi\-lity of finite subgroups
of ${\rm Bir}(X)$  in connected affine algebraic subgroups of
${\rm Bir}(X)$ and  Jordaness of ${\rm Bir}(X)$ (and the same holds for ${\rm Aut}(X)$).

For every integer $n\!>\!0$,
consider the   set of 	
 all isomorphism classes of con\-nected reductive algebraic groups of rank $\leqslant n$, and fix a group in
 every class. The obtained set of groups ${\mathcal R}_n$
is finite.\;Therefore,
\begin{equation}\label{Jn}
J_{\leqslant n}:=\underset{R\in {\mathcal R}_n}{\rm sup} J_{R}
\end{equation}
is a positive integer.

\begin{theorem}
\label{fcs}
Let $X$ be an irreducible variety of dimension $n$.\;Then every
finite subgroup $\,G$
of every connected affine
algebraic subgroup
 of $\,{\rm Bir}(X)$
has
a normal abelian subgroup whose index in $\,G$ is at most $J_{\leqslant n}$.
\end{theorem}

\begin{proof} Let $L$ be a connected affine algebraic subgroup of $\,{\rm Bir}(X)$ containing $G$.\;Being finite, $G$  is reductive.\;Let $R$ be a maximal reductive subgroup of $L$ containing $G$.\;Then
  $L$ is a semidirect product of $R$ and
  the unipotent radical of $L$, see\;\cite[Thm.\;7.1]{Mo}.\;Therefore, $R$ is connected because $L$ is.\;Faithfulness of the action $R$ acts on $X$ yields that
${\rm rk}\,R\leqslant \dim X$, see, e.g.,\,\cite[Lemma 2.4]{P3}.\;The claim then follows from \eqref{Jn}, Theorem \ref{lg}, and Defini\-tion\;\ref{Jd}.
\quad $\square$ \end{proof}

Theorem \ref{fcs} and Definition \ref{Jd} imply

\begin{corollary}\label{incl}
 Let $X$ be an irreducible variety of dimension $n$ such that
 ${\rm Bir}(X)$ \linebreak{\lb}resp.\;${\rm Aut}(X)\!)$ is non-Jordan.\;Then
 for every integer $d >\!J_{\leqslant n}$, there is a finite sub\-group $\,G$ of $\;{\rm Bir}(X)$
\lb resp.\;${\rm Aut}(X)\!)$  with the properties:
 \begin{enumerate}[\hskip 4.2mm\rm(i)]
\item $G$ does not lie in any connected affine algebraic subgroup of ${\rm Bir}(X)\!$
\linebreak \lb resp.\;${\rm Aut}(X)\!);$
\item
for any
abelian normal subgroup
of $\,G$, its index in $G$ is $\geqslant d$.
 \end{enumerate}
\end{corollary}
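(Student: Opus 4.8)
The plan is to derive Corollary \ref{incl} directly from Theorem \ref{fcs} together with the hypothesis that ${\rm Bir}(X)$ (resp. ${\rm Aut}(X)$) is non-Jordan. Fix an integer $d > J_{\leqslant n}$. The key observation is that Theorem \ref{fcs} furnishes a \emph{uniform} bound: every finite subgroup contained in \emph{any} connected affine algebraic subgroup of ${\rm Bir}(X)$ has a normal abelian subgroup of index at most $J_{\leqslant n}$, and hence of index strictly less than $d$. So any finite subgroup witnessing the failure of property (ii) automatically cannot be contained in a connected affine algebraic subgroup, which will give property (i) for free.

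First I would unpack what non-Jordaness means via Definition \ref{Jd}. Since ${\rm Bir}(X)$ is not Jordan, there is no single integer serving as a Jordan constant; in particular, for the integer $d$ fixed above, $d$ cannot be a Jordan constant for ${\rm Bir}(X)$. By the contrapositive of Definition \ref{Jd}, this means there exists a finite subgroup $G$ of ${\rm Bir}(X)$ such that no normal abelian subgroup of $G$ has index at most $d-1$, i.e. every abelian normal subgroup of $G$ has index $\geqslant d$. This is precisely property (ii). I would make explicit here that non-Jordaness guarantees the failure of the index-$d$ condition for \emph{some} finite subgroup, for each choice of $d$; one cannot find a uniform $G$ working for all $d$ simultaneously, but for each fixed $d$ a suitable $G$ exists, which is all the statement requires.

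Next I would establish property (i) for this same $G$. Suppose, toward a contradiction, that $G$ were contained in some connected affine algebraic subgroup $L$ of ${\rm Bir}(X)$. Then Theorem \ref{fcs} applies to $G$ and yields a normal abelian subgroup of $G$ whose index in $G$ is at most $J_{\leqslant n}$. But $J_{\leqslant n} < d$, so this subgroup would have index $< d$, contradicting property (ii) which we have just secured. Hence $G$ lies in no connected affine algebraic subgroup of ${\rm Bir}(X)$, establishing (i). The two properties therefore hold for the same $G$, as required. The argument for ${\rm Aut}(X)$ is identical, using the parenthetical remark in Theorem \ref{fcs} that the same conclusion holds for ${\rm Aut}(X)$ (whose connected affine algebraic subgroups are a subclass of those of ${\rm Bir}(X)$).

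I do not anticipate a genuine obstacle here, since the corollary is essentially a logical repackaging: property (ii) is the direct negation of the Jordan condition at threshold $d$, and property (i) follows because Theorem \ref{fcs} would otherwise force the Jordan condition to hold for $G$ at the better threshold $J_{\leqslant n} < d$. The only point requiring mild care is the quantifier structure—ensuring the reader understands that $G$ depends on the chosen $d$, and that the single bound $J_{\leqslant n}$ from Theorem \ref{fcs} is what bridges the two properties. The finiteness of ${\mathcal R}_n$ and positivity of $J_{\leqslant n}$ in \eqref{Jn} are what make $J_{\leqslant n}$ a legitimate finite integer to compare against $d$, so invoking \eqref{Jn} explicitly keeps the logic transparent.
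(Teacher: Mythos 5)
Your proposal is correct and is exactly the paper's argument: the paper derives Corollary \ref{incl} as an immediate consequence of Theorem \ref{fcs} and Definition \ref{Jd}, and your write-up simply spells out that implication (non-Jordaness yields, for each $d$, a finite subgroup violating the index-$d$ condition, and Theorem \ref{fcs} then forces that subgroup to lie outside every connected affine algebraic subgroup). The minor off-by-one in whether you negate the bound $d$ or $d-1$ is harmless, since both readings give index $\geqslant d$.
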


\begin{corollary} \label{inclCr}
If $\;{\rm Cr}_n$\;\lb resp.\;${\rm Aut}({\bf A}^{\hskip -.4mm n})\!)$ is non-Jordan, then
 for every integer $d >\!J_{\leqslant n}$, there is a finite subgroup $\,G$
of $\;{\rm Cr}_n$\;\lb resp.\;${\rm Aut}({\bf A}^{\hskip -.4mm n})\!)$ with the properties:
\begin{enumerate}[\hskip 4.2mm\rm(i)]
\item the action of $\,G$ on ${\bf A}^{\hskip -.4mm n}$ is
nonlinearizable;
 \item
 for any
abelian normal subgroup
of $\,G$, its index in $G$ is $\geqslant d$.
\end{enumerate}
\end{corollary}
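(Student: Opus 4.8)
The plan is to deduce this immediately from Corollary \ref{incl} by taking $X={\bf A}^{n}$. Since ${\rm Cr}_n={\rm Bir}({\bf A}^{n})$ and ${\bf A}^{n}$ is an irreducible variety of dimension $n$, the hypothesis of Corollary \ref{incl} holds in the Cremona case (with ${\rm Bir}(X)={\rm Cr}_n$ non-Jordan) and, separately, in the automorphism case (with ${\rm Aut}(X)={\rm Aut}({\bf A}^{n})$ non-Jordan). First I would fix an integer $d>J_{\leqslant n}$ and apply Corollary \ref{incl} to obtain a finite subgroup $G$ of ${\rm Cr}_n$ (resp.\ of ${\rm Aut}({\bf A}^{n})$) enjoying its two properties. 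Property (ii) of Corollary \ref{incl} is word for word property (ii) of the present statement, so it requires nothing further.

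It remains to convert property (i) of Corollary \ref{incl}, that $G$ lies in no connected affine algebraic subgroup, into nonlinearizability of the $G$-action on ${\bf A}^{n}$, and I would do this by contraposition. Recall that the action of $G$ is linearizable exactly when $G$ is conjugate, inside ${\rm Cr}_n$ (resp.\ inside ${\rm Aut}({\bf A}^{n})$), to a subgroup of the group ${\bf GL}_n(k)$ of linear automorphisms of ${\bf A}^{n}$. Thus if the action of the subgroup $G$ produced above were linearizable, there would exist $\varphi$ in ${\rm Cr}_n$ (resp.\ in ${\rm Aut}({\bf A}^{n})$) with $\varphi G\varphi^{-1}\subseteq {\bf GL}_n(k)$, whence $G\subseteq \varphi^{-1}{\bf GL}_n(k)\varphi$.

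The structural input I would then invoke is twofold. First, ${\bf GL}_n(k)$, realized as the linear automorphisms of ${\bf A}^{n}$, is itself a connected affine algebraic subgroup of ${\rm Cr}_n$ (resp.\ of ${\rm Aut}({\bf A}^{n})$): the homomorphism $\psi$ sending $g$ to the linear map it defines is an algebraic family, its associated partial map $(g,x)\mapsto g\cdot x$ being a morphism, and ${\bf GL}_n$ being connected and affine. Second, conjugation by a fixed element $\varphi$ of ${\rm Bir}({\bf A}^{n})$ carries connected affine algebraic subgroups to connected affine algebraic subgroups, since composing the defining family with $\varphi$ and $\varphi^{-1}$ produces another algebraic family over the same algebraic group ${\bf GL}_n$; this is the standard conjugation-invariance of the notion, for which I would cite \cite[Sect.\,1]{P3}. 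Granting these, $\varphi^{-1}{\bf GL}_n(k)\varphi$ is a connected affine algebraic subgroup containing $G$, contradicting property (i) of Corollary \ref{incl}; hence the action of $G$ on ${\bf A}^{n}$ is nonlinearizable, which is property (i) of the present corollary. The only point demanding genuine care — and the main obstacle to a fully rigorous account — is this conjugation-invariance: one must check that the composite partial map $(g,x)\mapsto\varphi\bigl(\psi(g)(\varphi^{-1}(x))\bigr)$ agrees, on a dense open subset, with a rational map, so that conjugation indeed yields an algebraic family, which is why I would lean on the formalism of \cite[Sect.\,1]{P3} rather than on a naive composition of rational maps.
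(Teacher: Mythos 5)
Your proposal is correct and follows essentially the same route as the paper: apply Corollary \ref{incl} with $X={\bf A}^{n}$, observe that ${\bf GL}_n(k)$ is a connected affine algebraic subgroup of ${\rm Aut}({\bf A}^{n})$ (hence of ${\rm Cr}_n$), and note that linearizability of the $G$-action means precisely that $G$ lies in a subgroup conjugate to ${\bf GL}_n(k)$, contradicting property (i) of Corollary \ref{incl}. The conjugation-invariance of the notion of connected affine algebraic subgroup, which you rightly single out and justify via the formalism of \cite[Sect.\,1]{P3}, is exactly the step the paper leaves implicit, so your write-up is simply a more detailed version of the paper's argument.
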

\begin{proof} This follows from Corollary \ref{incl} because ${\bf GL}_n(k)$ is a connected affine algebraic subgroup of  ${\rm Aut}({\bf A}^{\hskip -.4mm n})$ and nonlinearizability of the action of $G$ on ${\bf A}^{\hskip -.4mm n}$ means that $G$ is not contained in a subgroup of $\,{\rm Cr}_n$\;\lb resp.\;${\rm Aut}({\bf A}^{\hskip -.4mm n})\!)$ conjugate to ${\bf GL}_n(k)$.
\quad $\square$ \end{proof}

\subsubsection{Recent developments}\label{recent}

The
initiated in \cite{Po1}
line of research
of Jor\-da\-ness
of ${\rm Aut}(X)$ and ${\rm Bir}(X)$
for algebraic varieties $X$
has generated interest of
algebraic geometers in Moscow among whom I have promoted it,
and led to a further
progress in
Problem B (hence A as well) in papers
\cite{Z}, \cite{PS1}, \cite{PS2}.\;In  \cite{Z}, the earliest of them,
the examples of non-Jordan groups ${\rm Bir}(X)$
 only known to date (October 2013)
have been constructed
(see
Theorem \ref{nJ} above).\;Below are formulated the results obtained in \cite{PS1}, \cite{PS2}.\;Some of them are conditional, valid under the assumption that the following  general
 conjecture by A.\,Borisov, V.\,Alexeev, and L.\,Borisov
 holds true:

\vskip 2mm

\noindent
{\it BAB Conjecture.}\;{\rm All Fano varieties of a fixed dimension $n$ and with terminal singularities are contained in a finite number of algebraic families.}
\vskip 2mm
\begin{theorem}[{{\rm \cite[Thm.\,1.8]{PS1}}}]\label{RC} If the\;BAB\;Conjecture holds true in dimension $n$, then, for every rationally connected $n$-dimensional variety $X$, the group ${\rm Bir}(X)$ is Jordan and, moreover, $J_{{\rm Bir}(X)}\leqslant u_n$ for a number $u_n$ depending only on $n$.
\end{theorem}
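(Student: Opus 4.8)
The plan is to induct on $n=\dim X$ and to reduce, by equivariant birational geometry, to the Fano case where the assumed BAB Conjecture supplies boundedness. Fix a finite subgroup $G\subset{\rm Bir}(X)$; since rational connectedness is a birational invariant, I may replace $X$ by any birational model. First I would \emph{regularize} the action: there is a smooth projective variety $Y$, birational to $X$ and hence rationally connected, on which $G$ acts by biregular automorphisms (take a $G$-equivariant resolution of a closure of the graph of $G$). As $Y$ is rationally connected it is uniruled, so $K_Y$ is not pseudo-effective and a $G$-equivariant Minimal Model Program cannot terminate in a minimal model; it terminates instead in a $G$-Mori fiber space $\phi\colon\widetilde Y\to Z$, where $\widetilde Y$ has ${\bf Q}$-factorial terminal singularities, $-K_{\widetilde Y}$ is $\phi$-ample, and $\dim Z<n$. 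Both $Z$ and the general fiber of $\phi$ are again rationally connected, of dimension strictly less than $n$.

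Second, I would treat the two cases produced by the fibration. If $\dim Z=0$, then $\widetilde Y$ is a Fano variety of dimension $n$ with terminal singularities carrying a faithful biregular $G$-action. By the BAB Conjecture in dimension $n$ such Fano varieties form finitely many algebraic families; hence some fixed multiple $-mK_{\widetilde Y}$ is very ample and embeds $\widetilde Y$ into a projective space ${\bf P}^N$ with $N$ bounded solely in terms of $n$. Since this embedding is canonical, ${\rm Aut}(\widetilde Y)$ acts linearly and embeds as an algebraic subgroup of ${\bf GL}_{N+1}(k)$; by Theorem \ref{lg} (equivalently Theorem \ref{Jt}) it is Jordan with constant at most $J_{{\bf GL}_{N+1}(k)}$, a quantity depending only on $n$. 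As $G\subset{\rm Aut}(\widetilde Y)$, this bounds $G$. If instead $0<\dim Z<n$, the actions on base and on fibers give an exact sequence $1\to G_\eta\to G\to G_Z\to 1$, with $G_Z\subset{\rm Aut}(Z)$ and $G_\eta$ acting on the generic fiber $\widetilde Y_\eta$, a rationally connected variety of dimension $<n$; both factors are governed by the inductive hypothesis.

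The hard part will be the inductive combination step. As Example \ref{Zar} shows, an extension of two Jordan groups need not be Jordan, so merely knowing that $G_Z$ and $G_\eta$ have normal abelian subgroups of bounded index is insufficient. The remedy is to strengthen the inductive statement so as to track, in addition to the Jordan constant, a uniform bound on the \emph{rank} of finite abelian subgroups; one checks that a finite abelian group acting faithfully on a rationally connected $n$-fold has rank bounded in terms of $n$ (in the Fano case this is visible from the embedding into ${\bf GL}_{N+1}(k)$). With such a bounded-rank refinement in hand, a purely group-theoretic combination lemma builds, out of the normal abelian subgroups of $G_Z$ and of $G_\eta$, a normal abelian subgroup of $G$ whose index and rank are bounded by a function of $n$ alone, thereby closing the induction.

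Two further technical points require care. One must pass from the generic fiber $\widetilde Y_\eta$ over the non-closed field $k(Z)$ to the geometric situation over $k$ in which the inductive hypothesis literally applies; this is handled by a specialization argument, restricting the $G_\eta$-action to the fiber over a suitably general closed point of $Z$. One must also ensure that the $G$-equivariant MMP can be run and terminates on the rationally connected $G$-variety $Y$, which rests on standard equivariant MMP machinery. Collecting the bounds produced in the two cases and propagating them through the combination lemma yields the desired uniform constant $u_n$, and hence the Jordan property for ${\rm Bir}(X)$ with $J_{{\rm Bir}(X)}\leqslant u_n$.
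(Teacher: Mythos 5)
Your geometric skeleton---regularizing the finite group action, running a $G$-equivariant MMP to reach a $G$-Mori fiber space $\widetilde Y\to Z$, using the BAB Conjecture when $\dim Z=0$ to embed the terminal Fano $\widetilde Y$ anticanonically into a projective space of dimension bounded in terms of $n$ and then invoking Theorem \ref{Jt}, and inducting on dimension when $\dim Z>0$---is indeed the architecture of the proof in \cite{PS1} (which the present paper does not reproduce: it cites the result and only remarks that its pivotal idea is a technically refined form of the fixed-point method of Subsection \ref{fpm}). The gap is precisely in the step you yourself call the hard part: the ``purely group-theoretic combination lemma'' you postulate is false, and the counterexample is already contained in the paper's Example \ref{Zar}. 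Take $K=\mathbb Z/p\mathbb Z$ with $p$ prime there; then $Q_p=\mu_p\times K\times\widehat K$ is the Heisenberg group of order $p^3$, sitting in an exact sequence $1\to\mu_p\to Q_p\to K\times\widehat K\to 1$ whose kernel is cyclic (Jordan constant $1$, rank $1$) and whose quotient is $(\mathbb Z/p\mathbb Z)^2$ (Jordan constant $1$, rank $2$); moreover every abelian subgroup of $Q_p$ itself has order at most $p^2$, hence rank at most $2$, so your bounded-rank refinement is satisfied on all levels. Nevertheless every abelian subgroup of $Q_p$ has index at least $p$. Thus no function of the Jordan constants and abelian ranks of $G_\eta$ and $G_Z$ can bound the Jordan constant of the extension $G$, and your induction cannot close: tracking ranks does not exclude Heisenberg-type extensions, which is exactly the phenomenon Example \ref{Zar} (and Theorem \ref{nJ}) is built on.

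What \cite{PS1} does instead is to strengthen the inductive statement not by rank bounds but by a fixed-point property: assuming BAB in dimension $n$, for every rationally connected variety of dimension at most $n$ and every finite subgroup $G$ of its group of birational self-maps, a subgroup of $G$ of index bounded by a constant depending only on $n$ has a fixed point on a suitable regularized birational model. Unlike Jordaness, this property passes through a Mori fiber space geometrically rather than group-theoretically: if a bounded-index subgroup of $G_Z$ fixes a point $z\in Z$, then its preimage in $G$ preserves and acts on the fiber over $z$, and the inductive hypothesis applied to that fiber produces a bounded-index subgroup of $G$ fixing a point of $\widetilde Y$---no abstract extension problem is ever solved. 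Only at the very end is the fixed point converted into Jordaness, by linearizing at the fixed point (Lemma \ref{linea}, Theorem \ref{aaa}) and applying Theorem \ref{Jt}, with Theorem \ref{JJJJ} absorbing the bounded-index passages. This is the ``technically refined fixed-point method'' alluded to in Subsection \ref{recent}. Your observation that finite abelian subgroups have rank bounded in terms of $n$ is correct (and is also proved in \cite{PS1}), but it cannot substitute for this mechanism.
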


Since  for $n=3$ the BAB Conjecture is proved \cite{KMMT}, this yields
\begin{corollary}[{{\rm \cite[Cor.\,1.9]{PS1}}}] \label{Cr3} The space Cremona group
${\rm Cr}_3$ is Jordan.
\end{corollary}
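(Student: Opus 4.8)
The plan is straightforward: Corollary \ref{Cr3} is an immediate consequence of Theorem \ref{RC} together with an external input that verifies the hypothesis. Theorem \ref{RC} tells us that whenever the BAB Conjecture holds in dimension $n$, every rationally connected $n$-dimensional variety $X$ has Jordan group ${\rm Bir}(X)$. I would therefore deduce the corollary by specializing to $n=3$ and checking two things: first, that the BAB Conjecture is available in dimension $3$; and second, that the Cremona group ${\rm Cr}_3$ falls within the scope of the theorem, i.e.\ that it equals ${\rm Bir}(X)$ for an appropriate rationally connected threefold.

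First I would observe that ${\rm Cr}_3 = {\rm Bir}({\bf A}^3) = {\rm Bir}({\bf P}^3)$, since ${\bf A}^3$ and ${\bf P}^3$ are birationally isomorphic and ${\rm Bir}$ is a birational invariant. Thus the corollary is precisely the assertion that ${\rm Bir}({\bf P}^3)$ is Jordan. Since ${\bf P}^3$ is rational, it is in particular rationally connected, so it is a rationally connected $3$-dimensional variety to which Theorem \ref{RC} applies. The only remaining ingredient is the validity of the BAB Conjecture in dimension $3$.

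The substantive external input is the theorem of Kawamata, Matsuki, Mori, and Takagi \cite{KMMT}, which establishes the BAB Conjecture for $n=3$: Fano threefolds with terminal singularities form a finite number of algebraic families. With this in hand, the hypothesis of Theorem \ref{RC} is satisfied in dimension $3$, and the theorem yields that ${\rm Bir}({\bf P}^3) = {\rm Cr}_3$ is Jordan. (In fact Theorem \ref{RC} gives the stronger unconditional bound $J_{{\rm Cr}_3} \leqslant u_3$ for an explicit constant, though the qualitative Jordan property is all the corollary asserts.)

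There is essentially no obstacle internal to this argument: the entire weight of the proof rests on the deep boundedness result \cite{KMMT} and on Theorem \ref{RC} itself, both of which we are entitled to invoke. The only points requiring care are the verifications that ${\rm Cr}_3$ is the birational self-map group of a rationally connected threefold (immediate from rationality of ${\bf P}^3$) and that the dimension in \cite{KMMT} matches the dimension in which we apply Theorem \ref{RC}. Once these bookkeeping matters are confirmed, the corollary follows in one line.
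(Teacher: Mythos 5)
Your proof is correct and is exactly the paper's argument: the corollary follows immediately from Theorem \ref{RC} with $n=3$, since ${\rm Cr}_3={\rm Bir}({\bf P}^3)$ with ${\bf P}^3$ rationally connected, and the BAB Conjecture in dimension $3$ is established in \cite{KMMT}. One trivial slip: \cite{KMMT} is the paper of Koll\'ar, Miyaoka, Mori, and Takagi (not Kawamata--Matsuki--Mori--Takagi), though the reference itself and its content are the right ones.
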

 \begin{proposition}[{{\rm \cite[Prop.\;1.11]{PS1}}}] $u_3\leqslant (25920\cdot 20736)^{20736}$.
 \end{proposition}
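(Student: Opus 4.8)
The goal is to produce, for every rationally connected threefold $X$ and every finite subgroup $G\subset{\rm Bir}(X)$, a normal abelian subgroup of $G$ of index at most $(25920\cdot 20736)^{20736}$; since this bound is uniform in $G$ and in $X$, it bounds $J_{{\rm Bir}(X)}$, and hence $u_3$, in the sense of Theorem \ref{RC}. Here $25920=J_{{\bf GL}_4(k)}$ (item (iii) of Subsection \ref{JJJJJ}), while $20736$ is an explicit Minkowski-type constant. The plan is to split $G$ by a normal subgroup $H$ whose finite subgroups linearize in dimension $4$, so that $J_H\le 25920$, with a discrete quotient $G/H$ of order at most $20736$, and then to assemble the two estimates through Theorem \ref{JJJJ} together with the elementary inequality $b\,c^{\,b}\le(bc)^{b}$ for $b\ge 1$.

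First I would regularize and run equivariant minimal model theory. By Weil regularization, $G$-equivariant resolution, and the $G$-equivariant MMP, one replaces $X$ by a projective $G$-variety $Y$ with terminal singularities carrying a $G$-Mori fibre space structure $Y\to Z$; since $X$ is rationally connected the base $Z$ is rational of smaller dimension, and the argument proceeds by the three cases $\dim Z=0,1,2$. The BAB Conjecture, available in dimension $3$ by \cite{KMMT}, guarantees that the Fano fibres (and, when $\dim Z=0$, the Fano threefold $Y$ itself) vary in finitely many bounded families; this is precisely what forces all the ensuing constants to be uniform.

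Next I would extract the linear part. The analysis produces a normal subgroup $H$ of $G$ that either lies in a connected affine algebraic subgroup of ${\rm Bir}(Y)$ or fixes a point of the bounded model whose Zariski tangent space has dimension at most $4$ (reflecting the embedding dimension $\le 4$ of the index-one terminal threefold points, equivalently the group ${\bf PGL}_4$ acting on ${\bf P}^3$). In the first case Theorem \ref{fcs} applies; in the second, Lemma \ref{linea} together with Jordan's Theorem \ref{Jt} embeds every finite subgroup of $H$ into ${\bf GL}_4(k)$. Either way $J_H\le J_{{\bf GL}_4(k)}=25920$. The complementary quotient $G/H$ acts faithfully on a discrete invariant of the bounded model --- the relevant cohomology or N\'eron--Severi lattice, of uniformly bounded rank --- preserving an integral form, so a Minkowski-type estimate of the kind used in Example \ref{bou} bounds this finite group, the bookkeeping yielding $|G/H|=b_{G/H}\le 20736$.

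Finally, $H$ is normal in $G$ with $J_H\le 25920$, and $G/H$ is finite, hence bounded with $b_{G/H}\le 20736$, so Theorem \ref{JJJJ} gives
\[
J_G\le b_{G/H}\,J_H^{\,b_{G/H}}\le 20736\cdot 25920^{20736}\le (25920\cdot 20736)^{20736}.
\]
As this is independent of $G$ and of the rationally connected threefold $X$, it bounds $J_{{\rm Bir}(X)}$ uniformly, whence $u_3\le (25920\cdot 20736)^{20736}$. The formal combination through Theorems \ref{fcs}, \ref{Jt}, \ref{linea}, and \ref{JJJJ} is routine; the genuine obstacle is everything in the middle paragraphs --- running the equivariant MMP, invoking BAB to bound the Fano fibres, and controlling the action on the cohomology lattice finely enough to reach the explicit value $20736$.
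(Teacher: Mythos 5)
This proposition is not proved in the paper at all: it is quoted from \cite[Prop.\;1.11]{PS1}, and the only indication of the method given here is the remark in Subsection \ref{recent} that Theorem \ref{RC} rests on a technically refined form of the fixed-point method of Subsection \ref{fpm}. Measured against that method, your proposal has a genuine structural gap, and it sits exactly in the step you dismiss as ``routine'': the existence of a \emph{normal} subgroup $H$ of $G$ with $J_H\leqslant 25920$ and $b_{G/H}\leqslant 20736$, which is what Theorem \ref{JJJJ} requires. The fixed-point analysis on a $G$-Mori fibre space does not produce such an $H$: what it produces is a subgroup $F\leqslant G$ of bounded index (a stabilizer of a point, or of a fibre, or of a point of the base), and stabilizers are not normal in $G$. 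Your attempted substitute --- taking $H$ to be the kernel of the action on a N\'eron--Severi or cohomology lattice --- fails in the fibration cases: for a del Pezzo fibration or conic bundle $Y\to Z$ the natural normal subgroup is the kernel of $G\to{\rm Bir}(Z)$, and the quotient lands in ${\bf PGL}_2(k)$ or ${\rm Cr}_2$, which are Jordan but \emph{not bounded} (they contain arbitrarily large finite cyclic groups), so the claim $b_{G/H}\leqslant 20736$ is simply false there and Theorem \ref{JJJJ} cannot be invoked. Recall from Example \ref{Zar} that knowing $H$ and $G/H$ are Jordan is not enough; boundedness of the quotient is essential, and it is precisely what your decomposition does not deliver.

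The missing idea --- and the reason the constant has the precise shape $(25920\cdot 20736)^{20736}$ rather than your $20736\cdot 25920^{20736}$ --- is the passage from a \emph{non-normal} bounded-index subgroup to a normal abelian subgroup of $G$. In \cite{PS1} one finds $F\leqslant G$ with $[G:F]\leqslant 20736$ fixing a point on a suitable model; Lemma \ref{linea} and Jordan's theorem in embedding dimension $\leqslant 4$ (so $J_{{\bf GL}_4(k)}=25920$, item (iii) of Subsection \ref{JJJJJ}) give a normal abelian subgroup $A$ of $F$ with $[F:A]\leqslant 25920$. Since $A$ is normal in $F$, the conjugate $gAg^{-1}$ depends only on the coset $gF$, so there are at most $[G:F]$ distinct conjugates; their intersection $N=\bigcap_{g\in G}gAg^{-1}$ is normal and abelian in $G$, and the diagonal-embedding count (the same device as in the proof of Theorem \ref{JJJJ}, but run with a non-normal $F$) gives $[G:N]\leqslant\bigl([G:F][F:A]\bigr)^{[G:F]}\leqslant(25920\cdot 20736)^{20736}$. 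Had your structure (normal $H$, bounded quotient) been available, Theorem \ref{JJJJ} would yield the strictly sharper bound $20736\cdot 25920^{20736}$; the fact that \cite{PS1} asserts only the $(IJ)^{I}$-shaped constant is a fingerprint of the intersection-of-conjugates step your proposal omits. Your middle paragraphs (equivariant regularization and MMP, BAB in dimension $3$ via \cite{KMMT}, linearization in dimension $4$) do follow the right outline, though they are asserted rather than proved; but the final assembly, as you wrote it, is the step that collapses.
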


 The pivotal idea of the proof of Theorem \ref{RC} is to use
 a technically refined
  form of
 the ``fixed-point method'' described in Subsection \ref{fpm}.

 \begin{theorem}[{{\rm \cite[Thm.\,1.8]{PS2}}}]\label{uniru} Let $X$ be an irreducible smooth proper $n$-dimensional variety.
 \begin{enumerate}[\hskip 2.2mm \rm (i)]
 \item The group $\,{\rm Bir}(X)$ is Jordan in either of the cases:
 \begin{enumerate}[\hskip .2mm \rm (a)]
 \item $X$ is non-uniruled;
 \item  the BAB Conjecture holds true in dimension $n$ and the irregula\-ri\-ty of $X$\linebreak {\lb}i.e., the dimension of its Picard variety\rb\,
 is\;$0$.
 \end{enumerate}
  \item If $\,X$ is non-uniruled and its irregularity is $0$, then the group $\;{\rm Bir}(X)$ is bo\-un\-ded
 {\lb}\!see Definition\;{\rm{\ref{b}}}\rb.
 \end{enumerate}
 \end{theorem}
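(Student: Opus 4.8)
\noindent{\it Proof plan.}\; The plan is to treat all three statements by the single device of Theorem \ref{JJJJ}: in each case I will exhibit a normal subgroup $N\trianglelefteq{\rm Bir}(X)$ that is Jordan and for which the quotient ${\rm Bir}(X)/N$ is \emph{bounded}, whence ${\rm Bir}(X)$ is Jordan; for (ii) I will moreover arrange that $N$ itself is bounded, giving boundedness of the whole group. The recurring input is non-uniruledness, used exactly as in the surface case above: by the Rosenlicht--Matsumura type result \cite{Matm}, together with the structure theorem for algebraic groups \cite{Ba}, \cite{R1}, a non-uniruled $X$ carries no faithful action of a positive-dimensional connected \emph{linear} algebraic group, so every connected algebraic subgroup of ${\rm Bir}(X)$ is an abelian variety. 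Thus the ``continuous part'' of ${\rm Bir}(X)$ is abelian, hence Jordan, and this will furnish the identity component of $N$.

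First I would build the arithmetic ``bounded part'' of the quotient from the Albanese morphism $\alpha\colon X\to A:={\rm Alb}(X)$. Since $A$ and $\alpha$ are birational invariants, every $g\in{\rm Bir}(X)$ induces an automorphism of the variety $A$, yielding a homomorphism $\rho\colon{\rm Bir}(X)\to{\rm Aut}(A)$. Now ${\rm Aut}(A)=A\rtimes{\rm Aut}(A,0)$, where the translation part $A$ is abelian and ${\rm Aut}(A,0)\hookrightarrow{\rm GL}(H_1(A,{\bf Z}))$ is arithmetic, hence bounded by Minkowski's theorem (Example \ref{bou}); by Theorems \ref{gs}(2) and \ref{JJJJ} the target ${\rm Aut}(A)$ is Jordan, and so is $\rho({\rm Bir}(X))$ by Theorem \ref{gs}(1)(i). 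When the irregularity is $0$ one has $A=0$, $\rho$ is trivial, and the Albanese gives nothing: the whole weight of (ii) falls on the fibres, which is where the argument is hardest.

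The main work is the analysis of $\ker\rho$, the group of birational self-maps of $X$ lying over $A$, that is, ${\rm Bir}$ of the generic fibre $F$ of $\alpha$ over $k(A)$. Arguing by induction on $\dim X$, I note that $F$ is again non-uniruled (a uniruled general fibre would force $X$ uniruled), so the non-uniruled case in dimension $<n$ gives that $\ker\rho$ is Jordan. The quotient ${\rm Bir}(X)/\ker\rho$ embeds in ${\rm Aut}(A)$; its image in the bounded factor ${\rm Aut}(A,0)$ is harmless, but its translation directions are unbounded and must be absorbed. Here non-uniruledness re-enters: the translations realized by actual birational self-maps of $X$ generate a connected algebraic subgroup of ${\rm Bir}(X)$, which by the first paragraph is an abelian variety $\mathcal A$. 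Taking $N$ to be generated by $\mathcal A$ together with the continuous part of $\ker\rho$ should produce a normal Jordan subgroup with ${\rm Bir}(X)/N$ now landing in the bounded group ${\rm Aut}(A,0)$. For (i)(b) the analogous device is the maximal rationally connected fibration $X\dashrightarrow Z$: the fibres are rationally connected, so their birational groups are Jordan by Theorem \ref{RC} (this is where the BAB Conjecture enters), while the base $Z$ is non-uniruled with $q(Z)=0$, since $H^0(\Omega^1_Z)\hookrightarrow H^0(\Omega^1_X)=0$, so $Z$ falls under (ii).

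The hard part will be precisely this assembly, for two independent reasons. First, an extension of a Jordan group by a Jordan group need not be Jordan (Example \ref{Zar}); it is therefore not enough that fibre and base each contribute Jordan groups, and I must prove that the quotient by the fibre contribution is genuinely \emph{bounded}, not merely Jordan, before Theorem \ref{JJJJ} applies. That boundedness has to be extracted from a faithful action on a \emph{birationally invariant} integral lattice: the homology of $A$ supplies the arithmetic factor, but the fibrewise (monodromy) part must be shown bounded by a specifically non-uniruled mechanism, and producing this uniform bound is the core of the proof. Second, ${\rm Bir}(X)$ is not an algebraic group and a finite subgroup is only regularizable on a model depending on it, while the cohomology of such models is not a birational invariant (blow-ups inflate $b_2$); hence the lattice carrying the bound must be chosen intrinsically. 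Negotiating these two obstacles is exactly the content of \cite[Thm.\,1.8]{PS2}, whose technical heart is a fibred refinement of the fixed-point method of Subsection \ref{fpm}.
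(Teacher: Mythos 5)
The paper you are working against does not actually prove Theorem \ref{uniru}: it imports the statement verbatim from \cite[Thm.\,1.8]{PS2}, so your proposal has to stand on its own, and as a proof it has a genuine gap that you yourself name. At the two decisive points --- producing, for (ii), a birationally invariant lattice on which every finite subgroup of ${\rm Bir}(X)$ acts faithfully with uniformly bounded image, and proving, for (i), that the quotient by the fibre contribution is genuinely \emph{bounded} rather than merely Jordan --- you write only that this ``must be shown'' and conclude that negotiating these obstacles ``is exactly the content of \cite[Thm.\,1.8]{PS2}''. A plan that delegates its core to the theorem being proved is not a proof. Everything you do establish (Matsumura's theorem forcing connected algebraic subgroups of ${\rm Bir}(X)$ to be abelian varieties, the Albanese homomorphism $\rho\colon {\rm Bir}(X)\to{\rm Aut}(A)$, the reduction of (i)(b) to (ii) plus Theorem \ref{RC} via the MRC fibration, the invocation of Theorem \ref{JJJJ}) is the easy perimeter, and indeed matches the visible architecture of \cite{PS2}; the quantitative heart is absent.

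Moreover, the one step where you do commit to a mechanism would fail as stated. You propose to absorb the translation part of $\rho({\rm Bir}(X))$ into a normal Jordan subgroup $N$ on the grounds that ``the translations realized by actual birational self-maps of $X$ generate a connected algebraic subgroup of ${\rm Bir}(X)$'', hence an abelian variety. There is no reason for this: the realized translations $\rho({\rm Bir}(X))\cap A(k)$ may form a countable dense subgroup contained in no algebraic subgroup of ${\rm Bir}(X)$, and your $N$ is then an extension of an abelian group by $\ker\rho$ --- exactly the kind of Jordan-by-abelian extension that is non-Jordan in general. The paper's own surface discussion makes this concrete: for $X={\bf P}^1\times B$ with $B$ elliptic, decomposition \eqref{semi} exhibits ${\rm Bir}(X)$ as an extension with Jordan kernel ${\bf PGL}_2(k(B))$ and Jordan quotient ${\rm Aut}(B)$, yet ${\rm Bir}(X)$ is non-Jordan by Theorem \ref{nJ}, and Zarhin's theta groups (Example \ref{Zar}) sit precisely inside the preimage of the translations of $B$. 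So Jordaness of your $N$ is not a formal consequence of anything you have set up; it is equivalent to the quantitative input --- boundedness (or at least dimension-only Jordan constants) for ${\rm Bir}$ of the non-uniruled fibres over the non-closed field $k(A)$ --- that the proposal never supplies. The same uniformity problem undermines your induction: $\ker\rho$ consists of birational maps of the generic fibre over $k(A)$ (resp.\ $k(Z)$ in (i)(b)), whereas the statement you induct on concerns varieties over the algebraically closed field $k$; without constants depending only on the dimension, the induction does not close.
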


\section{Appendix: Problems}

Below I add a few additional problems to those which have already been formulated above (Problems A and  B in Subsection \ref{se}, and Questions \ref{Jord},\;\ref{QB}).

\subsection{\boldmath ${\rm Cr}_n$-conjugacy of finite subgroups of \boldmath${\;\bf GL}_n(k)$}

 Below ${\bf GL}_n(k)$ is identified in the standard way with the subgroup of ${\rm Cr}_n$, which, in turn, is identified with the subgroup of ${\rm Cr}_m$ for any $m=n+1, n+2, \ldots, \infty$ (cf.\;\cite[Sect.\,1]{P3} or \cite[Sect.\,1]{P2}).

 \begin{question}\label{iso} Consider the following properties of two finite subgroups $A$ and $B$ of
 $\,{\bf GL}_n(k)$:
 \begin{enumerate}[\hskip 2.2mm \rm(i)]
\item $A$ and $B$ are isomorphic,
\item $A$ and $B$ are conjugate in ${\rm Cr}_n$.
 \end{enumerate}
 Does (i) imply (ii)?
 \end{question}

\noindent {\it Comments.}

 1.\;Direct verification based on the classification in \cite{DI} shows
that the answer is affirmative for $n\leqslant 2$.

 2.\;By \cite[Cor.\;5]{P2},  if $A$ and $B$ are abelian, then the answer is affirmative for every $n$.

 3. If $A$ and $B$ are isomorphic, then they are conjugate in ${\rm Cr}_{2n}$.\;This is the corollary of the following  stronger statement:

 \begin{proposition}\label{co}
  For any finite group $\,G$ and any injective homomorphisms
 \begin{equation}\label{alpha}
 \xymatrix@C=.9mm{
 G\ar@/^1.0pc/[rrr]^{\alpha_1}\ar@/_1.0pc/[rrr]_{\alpha_2} &&& {\bf GL}_n(k),
  }
 \end{equation}
   there exists an element $\varphi\in {\rm Cr}_{2n}$ such that $\alpha_1={\rm Int}(\varphi)\circ\alpha_2$.
 \end{proposition}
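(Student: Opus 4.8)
The plan is to reinterpret $\mathrm{Cr}_{2n}$-conjugacy of the two embeddings as a $G$-equivariant birational equivalence of two linear actions on $\mathbf{A}^{2n}$, and to bridge these through a diagonal action by means of the no-name lemma. Write $\mathbf{A}^{2n}=\mathbf{A}^n_x\times\mathbf{A}^n_y$. Under the standard identification $\mathbf{GL}_n(k)\subset\mathrm{Cr}_n\subset\mathrm{Cr}_{2n}$, the subgroup $\alpha_i(G)$ acts by $g\cdot(x,y)=(\alpha_i(g)x,y)$; denote this action $\rho_i$. An element $\varphi\in\mathrm{Cr}_{2n}$ with $\alpha_1=\mathrm{Int}(\varphi)\circ\alpha_2$ is precisely a $G$-equivariant birational self-map from $(\mathbf{A}^{2n},\rho_2)$ to $(\mathbf{A}^{2n},\rho_1)$, since the identity $\mathrm{Int}(\varphi)\circ\alpha_2=\alpha_1$ means $\varphi\,\rho_2(g)\,\varphi^{-1}=\rho_1(g)$ for every $g\in G$. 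So it suffices to connect $\rho_1$ and $\rho_2$ by a chain of $G$-equivariant birational maps.

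First I would introduce the diagonal action $\delta(g)(x,y)=(\alpha_1(g)x,\alpha_2(g)y)$ as the bridge. Viewing $(\mathbf{A}^{2n},\delta)$ as a $G$-equivariant vector bundle over the base $(\mathbf{A}^n_x,\alpha_1)$ with fibre $\mathbf{A}^n_y$ carrying the linear action $\alpha_2$, and noting that the faithful finite action $\alpha_1$ is free on a dense $G$-stable open set (the complement of the finitely many fixed hyperplanes of nontrivial elements), the no-name lemma applies and yields a $G$-equivariant birational equivalence between $(\mathbf{A}^{2n},\delta)$ and $\rho_1$, the bundle with trivialized fibre action. Symmetrically, regarding $(\mathbf{A}^{2n},\delta)$ as a $G$-bundle over $(\mathbf{A}^n_y,\alpha_2)$ with fibre $(\mathbf{A}^n_x,\alpha_1)$, the no-name lemma gives a $G$-equivariant birational equivalence with the action $g\cdot(x,y)=(x,\alpha_2(g)y)$; composing with the linear coordinate swap $\tau\in\mathbf{GL}_{2n}(k)\subset\mathrm{Cr}_{2n}$ interchanging the two $\mathbf{A}^n$-blocks converts this into $\rho_2$.

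Composing these $G$-equivariant birational maps produces a single $\varphi\in\mathrm{Cr}_{2n}$ carrying $\rho_2$ to $\rho_1$; since $G$-equivariant birational maps compose by conjugating the actions, $\varphi$ satisfies $\alpha_1=\mathrm{Int}(\varphi)\circ\alpha_2$, as required. The crux, and the only non-formal ingredient, is the no-name lemma itself: over the free locus the $G$-equivariant bundle descends, by Galois descent for the free $G$-action, to an honest vector bundle on the quotient, which is Zariski-locally and hence birationally trivial, and pulling the trivialization back furnishes the desired $G$-birational trivialization of the fibre action. I expect the main points needing care to be the bookkeeping of this descent and the verification that the three equivalences can be realized birationally over a common dense open set, so that their composite is a well-defined element of $\mathrm{Cr}_{2n}$ inducing the correct conjugation on every $g\in G$ simultaneously.
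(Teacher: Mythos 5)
Your proposal is correct and is essentially the paper's own argument: the paper likewise bridges $\rho_1$ and $\rho_2$ through the diagonal action $(g,x,y)\mapsto(\alpha_1(g)\cdot x,\alpha_2(g)\cdot y)$ on ${\mathbf A}^n\times{\mathbf A}^n$ and applies the no-name lemma (cited there as classical Speiser's Lemma, \cite[Lemma 2.12]{LPR}) to each of the two $G$-equivariant projections. Your explicit treatment of the coordinate swap and of generic freeness only makes explicit what the paper leaves implicit.
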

 \begin{proof}
  Every element $g\in {\bf GL}_n(k)$ is a linear transformation $x\mapsto g\cdot x$ of ${\mathbf A}^n$
  (with respect to the standard structure of $k$-linear space
  on ${\mathbf A}^n$).
 The injections $\alpha_1$ and $\alpha_2$ determine two faithful linear actions
  of $G$ on ${\mathbf A}^n$: the $i$th action ($i=1, 2$) maps $(g, x)\in G\times {\mathbf A}^n$ to
  $\alpha_i(g)\cdot x$.
 Consider the product of these actions, i.e., the action of $G$ on ${\mathbf A}^n\times {\mathbf A}^n$ defined by
 \begin{equation}\label{action}
 G\times {\mathbf A}^n\times {\mathbf A}^n\to {\mathbf A}^n\times {\mathbf A}^n,\quad (g, x, y)\mapsto (\alpha_1(g)\cdot x, \alpha_2(g)\cdot y).
 \end{equation}
 The natural projection of ${\mathbf A}^n\times {\mathbf A}^n\to {\mathbf A}^n$ to the $i$th factor is $G$-equivariant. By classical Speiser's Lemma (see
 \cite[Lemma 2.12]{LPR} and references therein),
 this implies that
${\mathbf A}^n\times {\mathbf A}^n$ endowed with $G$-action \eqref{action} is
 $G$-equivariantly birationally isomorphic to  ${\mathbf A}^n\times {\mathbf A}^n$
endowed with the  $G$-action via the $i$th factor by means of $\alpha_i$.\;Therefore, ${\mathbf A}^n\times {\mathbf A}^n$ endowed with the $G$-action via the first factor by means of $\alpha_1$ is $G$-equivariantly birationally isomorphic to
${\mathbf A}^n\times {\mathbf A}^n$ endowed with the $G$-action via the second factor by means of $\alpha_2$; whence the claim.
\quad $\square$ \end{proof}

 \begin{remark} In general, it is impossible to replace ${\rm Cr}_{2n}$ by ${\rm Cr}_n$
 in Proposition\;\ref{co}.\;Indeed, in \cite{RY} one finds the examples of finite abelian groups $G$ and embeddings \eqref{alpha} such that $\alpha_1\notin {\rm Int}({\rm Cr}_n)\circ\alpha_2$.\;However, since the ima\-ges of these embeddings are isomorphic finite abelian subgroups of ${\bf GL}_n(k)$,  by \cite[Cor.\;5]{P2} these images are conjugate in ${\rm Cr}_n$.
 \end{remark}

 \subsection{Torsion primes}

Let $X$ be an irreducible variety.\;The following definition
is based on the fact that the notion of algebraic torus in
${\rm Bir}(X)$ makes sense.

 \begin{definition}[{{\rm \cite[Sect.\;8]{P2}}}]
 Let $G$ be a subgroup of ${\rm Bir}(X)$.\;A prime integer $p$ is called a {\it torsion number} of $G$ if there exists a finite abelian $p$-subgroup of $G$ that does not lie in any torus of $G$.
 \end{definition}

Let ${\rm Tors}(G)$ be the set of all torsion primes of $G$.\;If $G$ is a connected reductive algebraic subgroup of ${\rm Bir}(X)$, this set coincides with that of the torsion primes of algebraic group $G$ in the sense of classical definition, cf., e.g.,\;\cite[1.3]{Serre1}.

\begin{question}[{\rm\cite[Quest.\,3]{P2}}]
What are, explicitly,
$${\rm Tors}({\rm Cr}_n), \;\;
{\rm Tors}({\rm Aut}\,{\bf A}^n),\;\;
{\rm Tors}({\rm Aut}^*{\bf A}^n),\quad n=3, 4,\ldots, \infty
$$
where ${\rm Aut}^*{\bf A}^n$ is the group of those automorphisms of ${\bf A}^n$ that preserve the volume form $dx_1\wedge\cdots\wedge dx_n$ on ${\bf A}^n$ (here $x_1,\ldots, x_n$ are the standard coordinate functions on ${\bf A}^n$), cf.\,\cite[\S2]{P2}?
  \end{question}

\noindent {\it Comments.}
By \cite[Sect.\,8]{P2},
 \begin{align*}
 {\rm Tors}({\rm Cr}_1)&=\{2\},\\
 {\rm Tors}({\rm Cr}_2)&
 =\{2, 3, 5\}\quad\mbox{(this coincides with ${\rm Tors}(E_8)$),}\\
  {\rm Tors}({\rm Cr}_n)&\supseteq\{2, 3\}\quad\mbox{ for any $n\geqslant 3$},\\
  {\rm Tors}({\rm Aut}\,{\bf A}^n)&=
{\rm Tors}({\rm Aut}^*{\bf A}^n)=\varnothing\quad \mbox{for $n\leqslant 2$}.
  \end{align*}

  \begin{question}[{\rm \cite[Quest.\,4]{P2}}] What is the minimal $n$ such that $7$ lies in one of the sets
 ${\rm Tors}({\rm Cr}_n)$, ${\rm Tors}({\rm Aut}{\bf A}^{\hskip -.3mm n})$, ${\rm Tors}({\rm Aut}^*{\bf A}^{\hskip -.3mm n})$?
  \end{question}

  \begin{question}[{\rm\cite[Quest.\,5]{P2}}]
  Are these sets finite?
  \end{question}

  \begin{question}
Are
the sets
$$\textstyle
\bigcup_{n\geqslant 1} {\rm Tors}({\rm Cr}_{n}),\quad \bigcup_{n\geqslant 1} {\rm Tors}({\rm Aut}{\bf A}^{\hskip -.3mm n}),\quad \bigcup_{n\geqslant 1} {\rm Tors}({\rm Aut}^*{\bf A}^{\hskip -.3mm n})$$
finite?
  \end{question}

  \subsection{Embeddability in \boldmath${\rm Bir}(X)$}
Not every group $G$ is embeddable in ${\rm Bir}(X)$ for some $X$.\;For instance, by \cite[Thm.\,1.2]{Co2},
if $G$ is finitely generated, its embeddability in ${\rm Bir}(X)$ implies that $G$
has a solvable word problem.\;Another example:  by \cite{Ca}, ${\bf PGL}_\infty(k)$ is not embeddable
in ${\rm Bir}(X)$ for $k\!=\!\mathbf C$ (I thank S.\;Cantat who
informed me in \cite{Ca13} about
these examples).

 If ${\rm Bir}(X)$ is Jordan, then by Example \ref{nJ2} and Theorem \ref{gs}(1)(i),
  $\mathcal N$ is not embeddable in
  ${\rm Bir}(X)$.\;Hence, by Theorem \ref{uniru}(i)(a), $\mathcal N$ is not embeddable in
  ${\rm Bir}(X)$ for any non-uniruled $X$.

  \begin{conjecture}\label{conj}  The finitely generated
  group $\mathcal N$  defined in Example \ref{nJ2}
  is not embeddable in
  ${\rm Bir}(X)$ for every irreducible variety $X$.
  \end{conjecture}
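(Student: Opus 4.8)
The plan is to reduce Conjecture~\ref{conj} to a finiteness statement about the finite simple subgroups of ${\rm Bir}(X)$, and then to establish that statement by descending along the maximal rationally connected fibration.\;First I would isolate the only feature of $\mathcal N$ that matters: by the computation in Example~\ref{nJ2} it contains ${\rm Alt}_n$ for every $n$, and for $n\geqslant 5$ these groups are simple, nonabelian, and pairwise nonisomorphic (as used in Example~\ref{nJ1}).\;Hence, by Theorem~\ref{gs}(1)(i), an embedding $\mathcal N\hookrightarrow {\rm Bir}(X)$ would force ${\rm Bir}(X)$ to contain infinitely many pairwise nonisomorphic finite simple nonabelian subgroups.\;It therefore suffices to prove the following property~$(\mathrm F)$: for every irreducible $X$, only finitely many isomorphism classes of finite simple nonabelian groups occur as subgroups of ${\rm Bir}(X)$.\;By Definition~\ref{Jd}, $(\mathrm F)$ holds whenever ${\rm Bir}(X)$ is Jordan; in particular, by Theorem~\ref{uniru}(i)(a) it holds for all non-uniruled $X$, so only the uniruled case requires work.

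For uniruled $X$ I would replace it by a smooth projective model equipped with its maximal rationally connected fibration $\pi\colon X\to Y$ (Campana, Koll\'ar--Miyaoka--Mori), whose general fiber is rationally connected and whose base $Y$ is smooth, projective and non-uniruled (Graber--Harris--Starr).\;Since this fibration is intrinsic, every birational self-map of $X$ carries fibers to fibers and hence descends to one of $Y$, giving a homomorphism $\rho\colon {\rm Bir}(X)\to {\rm Bir}(Y)$ whose kernel $K$ is exactly the group of birational self-maps of the rationally connected generic fiber $X_\eta$ over the field $F:=k(Y)$.\;The image of $\rho$ lies in ${\rm Bir}(Y)$, which is Jordan by Theorem~\ref{uniru}(i)(a), hence is Jordan by Theorem~\ref{gs}(1)(i).\;If $K$ is also Jordan, then Theorem~\ref{simple}, applied to the normal subgroup $K\trianglelefteq {\rm Bir}(X)$ with Jordan quotient $\rho({\rm Bir}(X))$, yields $(\mathrm F)$ for ${\rm Bir}(X)$, completing the proof.\;(When $Y$ is a point, i.e.\ $X$ is itself rationally connected, one has $K={\rm Bir}(X)$ and this step is precisely Theorem~\ref{RC}.)

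The heart of the matter is thus the Jordan property of $K$, the group of birational self-maps of a rationally connected variety over the \emph{non-closed} field $F=k(Y)$ of characteristic zero.\;Over an algebraically closed field this is Theorem~\ref{RC} (conditional on the BAB Conjecture), so the real obstacle is to transpose the ``fixed-point'' method behind Theorem~\ref{RC} to finitely generated extensions of $k$: one must perform $G$-equivariant resolution and minimal model reduction over $F$, produce a $G$-fixed point on a relative Mori fiber space, and then apply Jordan's Theorem~\ref{Jt} to the induced action on a relative tangent space, all while still invoking BAB in the fiber dimension.\;I expect this relative version of Theorem~\ref{RC} to be the principal difficulty and to require an ingredient genuinely beyond the results quoted above.\;I would also note that Theorem~\ref{simple} uses only the dichotomy that a finite simple subgroup of ${\rm Bir}(X)$ either lies in $K$ or embeds into $\rho({\rm Bir}(X))$; consequently it is enough to bound the finite simple subgroups of $K$ rather than to prove full Jordanness, and establishing this weaker boundedness over $F$ by equivariant MMP is the most promising route toward an unconditional proof of Conjecture~\ref{conj}.
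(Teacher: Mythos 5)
Your proposal does not (and as written cannot) prove the statement: in the paper this is a \emph{conjecture}, and the paper itself offers no proof of it. What the paper does do is exactly your first step: it reduces Conjecture~\ref{conj} to Question~\ref{co} --- the assertion that any set of pairwise nonisomorphic simple nonabelian finite subgroups of ${\rm Bir}(X)$ is finite --- by noting that an embedding of $\mathcal N$ would force ${\rm Bir}(X)$ to contain infinitely many pairwise nonisomorphic finite simple groups (the paper argues via ${\rm Sym}_n$ containing every finite group, you argue via ${\rm Alt}_n$ for $n\geqslant 5$; this is the same reduction). Up to that point your argument is correct and coincides with the paper's own reduction.

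The gap is everything after that. Your plan to establish property $(\mathrm F)$ for uniruled $X$ by descending along the maximal rationally connected fibration $\pi\colon X\dashrightarrow Y$ leaves unproven precisely its load-bearing step: that the kernel $K\cong{\rm Bir}(X_\eta)$, the group of birational self-maps of the rationally connected generic fiber over the \emph{non-closed} field $F=k(Y)$, is Jordan --- or even satisfies the weaker finiteness of isomorphism classes of finite simple nonabelian subgroups, which (as you correctly observe) is all that the dichotomy in the proof of Theorem~\ref{simple} requires. None of the results available in the paper supplies this: Theorem~\ref{RC} is stated over the algebraically closed field $k$ and is moreover conditional on the BAB Conjecture; Theorem~\ref{uniru} requires non-uniruledness or irregularity $0$; and Theorem~\ref{simple} gives nothing until both $K$ and the quotient are known to be Jordan. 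You acknowledge this yourself (``the principal difficulty\dots genuinely beyond the results quoted above''), which is honest, but it means the proposal is a correct reduction plus a research program, not a proof. Note also that even if the relative step were carried out by the fixed-point method you describe, the conclusion would be conditional on BAB in the fiber dimension, whereas the conjecture is unconditional; the paper records exactly such partial answers to Question~\ref{co} (dimension $\leqslant 2$; non-uniruled $X$; rationally connected or irregularity-$0$ varieties, unconditionally in dimension $3$ and under BAB in general) and explicitly leaves the general case open.
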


  Since $\mathcal N$ contains ${\rm Sym}_n$ for every $n$, and every finite group can be embedded in
  ${\rm Sym}_n$ for an appropriate $n$, the existence of an irreducible variety $X$ for which ${\rm Bir}(X)$ contains an isomorphic copy of $\mathcal N$ implies that ${\rm Bir}(X)$ contains an isomorphic copy of every finite group and, in particular, every simple finite group. Therefore, Conjecture \ref{conj} follows from the affirmative answer to

   \begin{question}\label{co} Let $X$ be an irreducible variety. Is
   any set of pairwise nonisomorphic simple nonabelian finite subgroups of ${\rm Bir}(X)$ finite?
\end{question}

The affirmative answer looks likely.
At this writing (October 2013) about this question I know the following:

\begin{proposition}
If $\dim X\leqslant 2$, then the answer to Question \ref{co} is affirmative.
\end{proposition}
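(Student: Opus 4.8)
The plan is to reduce everything to Theorem \ref{simple}, which delivers exactly the asserted finiteness as soon as ${\rm Bir}(X)$ possesses a normal subgroup $H$ with both $H$ and ${\rm Bir}(X)/H$ Jordan. Since ${\rm Bir}(X)$ is a birational invariant, I would first invoke the classification in dimension $\leqslant 2$: by Theorem \ref{CS}, the group ${\rm Bir}(X)$ is Jordan unless $X$ is birationally isomorphic to ${\bf P}^1\times E$ with $E$ an elliptic curve. When ${\rm Bir}(X)$ is itself Jordan there is nothing to do beyond applying Theorem \ref{simple} with $H=\{1\}$; equivalently, one uses the opening observation of the proof of Theorem \ref{simple}, namely that a Jordan group $G$ has only finitely many pairwise nonisomorphic simple nonabelian finite subgroups, because each such subgroup, having no abelian subgroup of index $\leqslant J_G$ unless its order is bounded, lies among the finitely many finite groups of order $\leqslant J_G!\,$ (more simply, has order bounded in terms of $J_G$).

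The only genuinely new case is $X$ birationally isomorphic to ${\bf P}^1\times E$, where I would reuse the structure already extracted in the analysis of nonrational ruled surfaces. There the elements of ${\rm Bir}(X)$ permute the fibres of the projection ${\bf P}^1\times E\to E$, the subgroup ${\rm Bir}_E(X)$ inducing the trivial permutation is normal, and one has
\begin{equation*}
{\rm Bir}(X)={\rm Bir}_E(X)\rtimes{\rm Aut}(E),\qquad {\rm Bir}_E(X)={\bf PGL}_2(k(E)).
\end{equation*}
In particular ${\rm Bir}(X)$ fits into the extension $1\to {\bf PGL}_2(k(E))\to {\rm Bir}(X)\to {\rm Aut}(E)\to 1$, and it remains only to check that both ends are Jordan. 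The quotient ${\rm Aut}(E)$ is Jordan exactly as in Subsection \ref{smalld}: it is an extension of the finite group ${\rm Aut}(E_{\rm ab})$ by the abelian group $E_{\rm ab}$, hence Jordan by Theorems \ref{lg}, \ref{JJJJ} and \ref{gs}(2). The kernel ${\bf PGL}_2(k(E))$ is a linear group over the characteristic-zero field $k(E)$ — embed it, say, by the adjoint action into ${\bf GL}_3(k(E))$ — and is therefore Jordan by the Corollary to Theorem \ref{Jt} that every linear group is Jordan. With $H={\bf PGL}_2(k(E))$ and ${\rm Bir}(X)/H={\rm Aut}(E)$ both Jordan, Theorem \ref{simple} gives the claim.

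The only point requiring care, and what I would flag as the main obstacle, is the assertion that ${\bf PGL}_2(k(E))$ is Jordan as an abstract group, since $k(E)$ is not algebraically closed. The cleanest way is to note that any finite subgroup of ${\bf GL}_3(k(E))$ has matrix entries generating a finitely generated, hence $\mathbf C$-embeddable, subfield, so that Jordan's theorem applies with the universal constant for ${\bf GL}_3$; alternatively one passes to ${\bf GL}_3(\overline{k(E)})$ and invokes the general-field form of Theorem \ref{Jt}. Everything else in the argument is a direct appeal to Theorem \ref{simple} together with the surface classification underlying Theorem \ref{CS}, so I expect no further difficulty.
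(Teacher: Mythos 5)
Your proof is correct and follows essentially the same route as the paper: Theorem \ref{CS} plus Theorem \ref{simple} handle every $X$ not birational to ${\bf P}^1\times E$, and for $X={\bf P}^1\times E$ the decomposition \eqref{semi} with $H={\rm Bir}_E(X)={\bf PGL}_2(k(E))$ and quotient ${\rm Aut}(E)$ feeds into Theorem \ref{simple}. The one place you are more careful than the paper is the Jordaness of ${\bf PGL}_2(k(E))$ over the non-algebraically-closed field $k(E)$, which the paper dispatches by citing Theorem \ref{lg}; your reduction to linear groups over $\mathbf C$ (or over $\overline{k(E)}$) is a legitimate filling-in of that step, not a different argument.
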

\begin{proof}
The  claim immediately follows from Theorems \ref{CS} and \ref{simple}
if $X$ is not birationally isomorphic to ${\bf P}^1\times B$, where $B$ is an elliptic curve. For $X={\bf P}^1\times B$ the claim follows from \eqref{semi}   and Theorem \ref{simple} because
${\rm Aut}(B)$ and ${\rm Bir}_B(X)$ are Jordan groups.
\quad $\square$
\end{proof}

By Theorems \ref{RC}, \ref{uniru} and by \cite{KMMT}, the answer to Question \ref{co} is affirmative also in each of the following cases:
 \begin{enumerate}[\hskip 4.2mm \rm(i)]
 \item $X$ is non-uniruled;
 \item $X$ is rationally connected   or smooth proper with irregula\-ri\-ty $0$, and
 \begin{enumerate}[\hskip -.9mm\rm(a)]
 \item either $\dim X=3$ or
 \item $\dim X>3$ and  the\;BAB\;Conjec\-ture holds true in dimension $\dim X$.
 \end{enumerate}
 \end{enumerate}

 Note that  if  $X$ and ${\rm Bir}(X)$ in Question \ref{co} are replaced, respectively, by a connected smooth topological ma\-ni\-fold $M$ and ${\rm Diff}(M)$, then by Theorem \ref{Man}, for a noncompact
 $M$,
 the answer,
 in general, is negative. But for a compact $M$ a finiteness theorem \cite[Thm.\,2]{Po5} holds.

 \subsection{Contractions}  Developing  the classical line of research,  in recent ye\-ars
 were growing activities aimed at description  of finite subgroups of ${\rm Bir}(X)$ for various $X$; the case of
rational $X$ (i.e., that of the Cremona group ${\rm Bir}(X)$) was,
probably, most actively explored with culmination in the classification of finite subgroups of
${\rm Cr}_2$, \cite{DI}.\;In these studies, all the classified groups  appear  in the corresponding lists on equal footing.\;However, in fact, some of them are ``more basic'' than the others
because the latter may be obtained from the former by  a certain standard construction.\;Given this, it is natural to pose the problem of describing these ``basic'' groups.

Namely,  let $X_1$ and $X_2$ be the irreducible varieties
and let $G_i\subset {\rm Bir}(X_i)$, $i=1, 2$, be the subgroups isomorphic
to a finite group $G$.\;Assume that fixing the isomorphisms $G\to G_i$, $i=1, 2$,
we obtain the rational actions of $G$ on $X_1$ and $X_2$ such that there is
a $G$-equivariant rational dominant map $\varphi\colon X_1\dashrightarrow X_2$.\;Let
$\pi^{}_{X_i}\colon X_i\dashrightarrow  X_i\dss G$, $i=1, 2$  be
the rational quotients (see,\,e.g.\;\cite[Sect.\,1]{P3}) and let
$\varphi^{}_{G}\colon X_1\dss G\dashrightarrow X_2\dss G$ be
 the do\-mi\-nant rational
map induced by
$\varphi$.\;Then the following holds (see,\;e.g.\;\cite[Sect.\;2.6]{Re}):

(1) The appearing commutative diagram
\begin{equation}\label{diag}
\begin{matrix}
\xymatrix{X_1\ar@{-->}[r]^{\varphi}\ar@{-->}[d]_{\pi^{}_{X_1}}&X_2\ar@{-->}[d]^{\pi^{}_{X_2}}\\
X_1\dss G\ar@{-->}[r]^{\varphi^{}_{G}}& X_2\dss G
}
\end{matrix}
\end{equation}
is, in fact, cartesian, i.e.,
$\pi^{}_{X_1}\colon X_1\dashrightarrow  X_1\dss G$ is obtained from $\pi^{}_{X_2}\colon X_2\dashrightarrow X_2\dss G$ by the base change  $\varphi^{}_{G}$.\;In particular,  $X_1$ is birationally
$G$-isomorphic to
$$X_2\times _{X_2\dss G}(X_1\dss G):=\overline{\{(x, y)\in X_2\times (X_1\dss G) \mid
\pi^{}_{X_2}(x)=\varphi^{}_{G}(y)\}}.$$

(2) For every irreducible variety $Z$ and every dominant
rational map $\beta\colon Z\to X_2\dss G$ such that $X_2\times_{X_2\dss G}Z$
is irreducible, the variety $X_2\times_{X_2\dss G}Z$ inherits via $X_2$ a faithful rational action of $G$  such that
one obtains commutative diagram \eqref{diag} with $X_1=X_2\times_{X_2\dss G}Z$, $\varphi^{}_{G}=\beta$, and $\varphi={\rm pr}_1$.

If such a $\varphi$ exists,
we say that $G_1$ is {\it induced} from $G_2$ {\it by a base change}. The latter is called {\it trivial} if
$\varphi$ is a birational isomorphism.\;If a finite subgroup $G$ of ${\rm Bir}(X)$ is not induced by a nontrivial base change, we say that $G$ is {\it incompressible}.

\begin{example}
The standard embedding ${\rm Cr}_n \hookrightarrow {\rm Cr}_{n+1}$ permits to consider the finite subgroups of ${\rm Cr}_n$ as that of ${\rm Cr}_{n+1}$.\;Every finite subgroup of ${\rm Cr}_{n+1}$ obtained this way is induced by the nontrivial base change determined by the projection ${\bf A}^{\hskip -.4mm n+1}\to
{\bf A}^{\hskip -.4mm n}$, $(a_1,\ldots, a_n, a_{n+1})\mapsto (a_1,\ldots, a_n, a_{n+1})$.
\quad $\square$ \end{example}

\begin{example}
This is Example 6 in \cite{Re2}.\;Let $G$ be a finite group that does not embed in ${\rm Bir}(Z)$ for any curve $Z$ of genus $\leqslant 1$ (for instance, $G={\rm Sym}_5$) and let $X$ be a smooth projective
curve of minimal
possible genus such that $G$ is isomorphic to a subgroup of ${\rm Aut}(X)$.\;Then
this subgroup of ${\rm Bir}(X)$
is incompressible.\hskip -1mm\footnote{The proof in  \cite{Re2} should be corrected as follows.\;Assume that there is a faithful
action of $G$ of a smooth projective curve $Y$ and a dominant $G$-equivariant
morphism $\varphi\colon X\to Y$ of degree $n>1$.\;By the construction,  $X$ and $Y$ have the same genus $g>1$, and the Hurwitz formula yields
that the number of branch points of $\varphi$ (counted with positive multiplicities) is the integer $(n-1)(2-2g)$.\;But the latter is negative,\,---\,a contradiction.}\quad $\square$
\end{example}

\begin{example} By Example 5 in \cite{Re2},
a finite cyclic subgroup of order $\geqslant 2$ in ${\rm Bir}(X)$ is never incompressible.
\quad $\square$
\end{example}

\begin{example}\label{G2}
Consider two rational actions of
$G:={\rm Sym}_3\times \mathbb Z/2\mathbb Z$ on ${\bf A}^{\hskip -.4mm 3}$.\;The subgroup  ${\rm Sym}_3$ acts by natural permuting the coordinates in both cases.\;The nontrivial element of $Z/2\mathbb Z$ acts  by
$(a_1, a_2, a_3)\mapsto (-a_1, -a_2, -a_3)$ in the first case and by $(a_1, a_2, a_3)\mapsto (a_1^{-1}, a_2^{-1}, a_3^{-1})$ in the second.\;The surfaces
\begin{align*}
P&:=\{(a_1, a_2, a_3)\in {\bf A}^{\hskip -.4mm 3}\mid
a_1 + a_2 + a_3=0\},\\[-.6mm]
T&:=\{(a_1, a_2, a_3)\in {\bf A}^{\hskip -.4mm 3}\mid
a_1 a_2 a_3=1\}
\end{align*}
 are $G$-stable in, resp., the first and the second case.\;Since $P$ and $T$ are rational,
these actions of $G$ on $P$ and $T$ determine, up to conjugacy, resp., the subgroups $G_P$ and $G_T$ of ${\rm Cr}_2$, both isomorphic to $G$.\;By \cite{isk} (see also\;\cite{LPR}, \cite{LPZ2}),  these subgroups
are not conjugate in  ${\rm Cr}_2$.\;How\-ever, by \cite[Sect.\;5]{LPZ2},    $G_T$ is induced from
$G_P$ by a nontrivial base change (of degree $2$).
\quad $\square$ \end{example}

In fact, Example \ref{G2} is a special case (related to the simple algebraic group ${\bf G}_2$) of the following

\begin{example}\label{Caley} Let $G$ be a connected reductive algebraic group. Re\-call \cite[Def.\,1.5]{LPR} that $G$ is called a Cayley group if there is a birational isomorphism of $\lambda\colon G\dashrightarrow {\rm Lie}(G)$, where ${\rm Lie}(G)$ is the Lie algebra of $G$, equivariant with respect to the conjugating and adjoint actions of $G$ on the underlying varieties of $G$ and ${\rm Lie}(G)$, respectively, i.e., such that
\begin{equation}\label{Cay}
\lambda(gXg^{-1})={\rm Ad}_G^{}g(\lambda(X))
\end{equation}
if $g$ and $X\in G$ and both sides of \eqref{Cay} are defined.

Fix  a maximal torus $T$ of $G$ and consider the natural actions of the Weyl group $W=N_G(T)/T$ on $T$ and on  $\mathfrak t:={\rm Lie}(T)$.\;Since these actions are faithful and $T$ and $\mathfrak t$ are rational varieties, this determines, up to conjugacy, two embeddings  of $W$ in  ${\rm Cr}_r$, where $r=\dim T$.\;Let $W_T$ and $W_{\mathfrak t}$ be the images of these embeddings.\;By \cite[Lemma\;3.5(a) and Sect.\,1.5]{LPR}, if $G$ is not Cayley and $W$ has no outer autormorphisms, then $W_T$ and $W_{\mathfrak t}$ are not conjugate in
${\rm Cr}_r$.\;On the other hand, by \cite[Lemma\;10.3]{LPR}, $W_T$ is induced from $W_{\mathfrak t}$ by a (nontrivial) base change (see also Lemma \ref{based} below).

This yields, for arbitrary $n$, the examples of isomorphic nonconjugate finite subgroups of ${\rm Cr}_n$ one of which is induced from the other by a nontrivial base change.\;For instance, if $G\!=\!{\bf SL}_{n+1}$, then $r\!=\!n$ and $W={\rm Sym}_n$.\;Since, by \cite[Thm.\;1.31]{LPR}, ${\bf SL}_{n+1}$ is not Cayley for $n\geqslant 3$ and ${\rm Sym}_n$ has no outer automorphisms for $n\neq 6$, the above construction yields
for these $n$ two nonconjugate subgroups of ${\rm Cr}_n$ isomorphic to ${\rm Sym}_n$, one of which is induced from the other by a nontrivial base change.
\quad $\square$\end{example}

The following gives a general way of constructing two finite subgroups of ${\rm Cr}_n$ one of which is induced from the other by a base change.

Consider an $n$-dimensional irreducible nonsingular variety $X$ and  a finite subgroup $G$ of ${\rm Aut}(X)$.\;Suppose that $x\in X$ is a fixed point of $G$.\;By Lemma \ref{linea}, the induced action of $G$ on the tangent space of $X$ at $x$ is faithful.\;Therefore this action determines, up to conjugacy, a subgroup $G_1$ of ${\rm Cr}_n$ isomorphic to $G$. On the other hand, if $X$ is rational, the action of $G$ on $X$ determines,
up to conjugacy, another subgroup $G_2$ of  ${\rm Cr}_n$ isomorphic to $G$.
\begin{lemma}\label{based}
$G_2$ is induced from $G_1$ by a base change.
\end{lemma}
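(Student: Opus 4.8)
The strategy is to produce a single $G$-equivariant dominant rational map whose source is $X$ (equipped with the action defining $G_2$) and whose target is the tangent space ${\rm T}_{x,X}={\bf A}^n$ (equipped with the linearized action defining $G_1$). Indeed, by the definition of induction by a base change, the existence of a $G$-equivariant dominant $\varphi\colon X_1\dashrightarrow X_2$ means precisely that the subgroup on the source $X_1$ is induced from the subgroup on the target $X_2$; applying this with $X_1=X$ and $X_2={\rm T}_{x,X}$ yields exactly that $G_2$ is induced from $G_1$.

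To build $\varphi$, I work in the local ring $\mathcal O_{x,X}$ with maximal ideal $\mathfrak m:=\mathfrak m_{x,X}$, which is a $G$-module because $x$ is fixed by $G$. As in the proof of Lemma \ref{linea}, finiteness of $G$ and ${\rm char}\,k=0$ make $G$ linearly reductive, so the natural surjection $\mathfrak m\to\mathfrak m/\mathfrak m^2={\rm T}^*_{x,X}$ admits a $G$-equivariant section; let $L\subset\mathfrak m$ be its image, an $n$-dimensional $G$-submodule with $L\cong{\rm T}^*_{x,X}$. Choosing a $k$-basis $f_1,\dots,f_n$ of $L$ --- regular functions on a neighborhood of $x$, hence elements of $k(X)$ --- I define the evaluation map
\[
\varphi\colon X\dashrightarrow L^*,\qquad z\mapsto {\rm ev}_z,\quad {\rm ev}_z(f):=f(z),
\]
i.e.\ $z\mapsto(f_1(z),\dots,f_n(z))$ in the dual basis, and identify $L^*\cong({\rm T}^*_{x,X})^*={\rm T}_{x,X}$. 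Equivariance is then immediate from the dual action: writing the $G$-action on functions as $(g\cdot f)(z)=f(g^{-1}\cdot z)$, one gets ${\rm ev}_{g\cdot z}(f)=f(g\cdot z)=(g^{-1}\cdot f)(z)={\rm ev}_z(g^{-1}\cdot f)=(g\cdot{\rm ev}_z)(f)$, so ${\rm ev}_{g\cdot z}=g\cdot{\rm ev}_z$.

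Dominance follows by computing the differential of $\varphi$ at $x$: it is the map ${\rm T}_{x,X}\to{\rm T}_0L^*=L^*$ sending a tangent vector $v$ to the functional $f\mapsto v(f)$, which is the canonical isomorphism ${\rm T}_{x,X}\cong(\mathfrak m/\mathfrak m^2)^*=L^*$; since $\dim X=\dim L^*=n$ and this differential is an isomorphism, $\varphi$ is dominant. The one point that must be pinned down --- and which I expect to be the main subtlety rather than any calculation --- is that the $G$-action on $L^*$ for which $\varphi$ is equivariant really coincides with the representation of $G$ on ${\rm T}_{x,X}$ that defines $G_1$. This holds by construction: $d_x\varphi$ is a $G$-module isomorphism intertwining the tangent action on ${\rm T}_{x,X}$ with the action on $L^*=({\rm T}^*_{x,X})^*$, so after the identification $L^*={\rm T}_{x,X}$ the target action is exactly the linearized one. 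With this matching in place, $\varphi$ is the required $G$-equivariant dominant rational map and the conclusion follows.
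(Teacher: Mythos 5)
Your proof is correct, and it is in substance a self-contained version of what the paper handles by citation: the paper's own proof reduces to the case of affine $X$ via Lemma \ref{cov} and then invokes \cite[Lemma 10.3]{LPR}, whereas you unwind that reference and construct the required $G$-equivariant dominant rational map $X\dashrightarrow {\rm T}_{x,X}$ explicitly. Your key device --- splitting $\mathfrak m_{x,X}=L\oplus\mathfrak m_{x,X}^2$ equivariantly by linear reductivity of the finite group $G$ and taking the evaluation map $z\mapsto(f_1(z),\ldots,f_n(z))$ on a basis of $L$ --- is exactly the linearization trick the paper itself uses in the proof of Lemma \ref{linea}, and it is also the mechanism behind the cited lemma of Lemire--Popov--Reichstein. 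One genuine difference: by working with germs in the local ring $\mathcal O_{x,X}$ (which are rational functions, so they legitimately define a rational map), you bypass the reduction to affine $X$ entirely; the paper needs Lemma \ref{cov} only because the cited result is stated for affine varieties. Your verification of the two delicate points is sound: equivariance follows from the computation ${\rm ev}_{g\cdot z}=g\cdot{\rm ev}_z$ for the dual action, dominance follows since $d_x\varphi$ is the canonical isomorphism ${\rm T}_{x,X}\cong L^*$ (using smoothness of $X$ at $x$, which the paper's hypotheses provide), and the identification of the target module $L^*$ with the tangent representation defining $G_1$ is forced by that same differential. The trade-off is the usual one: the paper's argument is two lines but outsources the content; yours is longer but exhibits the map and makes clear that rationality of $X$ plays no role in the construction, only in making $G_2$ a subgroup of ${\rm Cr}_n$.
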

\begin{proof}
By Lemma \ref{cov} we may assume that $X$ is affine, in which case the claim follows from \cite[Lemma\;10.3]{LPR}.
\quad $\square$ \end{proof}

\begin{corollary} Let $X$ be a nonrational irreducible variety and
let $G$ be an incompressible finite subgroup of
${\rm Aut}(X)$.\;Then $X^G=\varnothing$.
\end{corollary}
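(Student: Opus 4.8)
\noindent\emph{Proof proposal.} The plan is to argue by contradiction: assuming $X^G\neq\varnothing$, I would use a fixed point to exhibit $G\subset{\rm Bir}(X)$ as induced from a \emph{linear} copy of itself by a base change, and then invoke nonrationality to conclude that this base change is nontrivial, contradicting incompressibility. The guiding principle is that a fixed point linearizes the action up to birational equivalence, and a linear action lives on the rational variety ${\rm T}_{x,X}$; a nonrational $X$ cannot be birational to it, so the base change is forced to be nontrivial.

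Concretely, let $x\in X^G$ and first reduce to the case in which $x$ is a smooth point, so that $\dim {\rm T}_{x,X}=\dim X=:n$. I would then reproduce the construction behind Lemma \ref{based}: since $G$ is finite and ${\rm char}\,k=0$, choose a $G$-stable complement $L$ to $\mathfrak m_{x,X}^2$ in $\mathfrak m_{x,X}$ and a basis $t_1,\dots,t_n$ of $L$; the tuple $(t_1,\dots,t_n)$ defines a $G$-equivariant rational map $\varphi\colon X\dashrightarrow {\rm T}_{x,X}$, where $G$ acts linearly on ${\rm T}_{x,X}\cong{\bf A}^{n}$ through the tangent representation, which is faithful by Lemma \ref{linea}. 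As the $t_i$ form a basis of $\mathfrak m_{x,X}/\mathfrak m_{x,X}^2$, the differential of $\varphi$ at $x$ is an isomorphism, so $\varphi$ is dominant; equivalently, this is \cite[Lemma\;10.3]{LPR} after passing to an affine $G$-stable neighbourhood via Lemma \ref{cov}. Thus $G$ is induced from the linear subgroup $G_1\cong G$ of ${\rm Cr}_n$ by the base change $\varphi$, exactly as in Lemma \ref{based}. Since ${\rm T}_{x,X}\cong{\bf A}^{n}$ is rational while $X$ is not, $\varphi$ cannot be a birational isomorphism, so this base change is nontrivial; hence $G$ is not incompressible, a contradiction. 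Therefore $X^G=\varnothing$.

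The step I expect to be the main obstacle is the reduction to a smooth fixed point. If $X^G$ meets only the singular locus of $X$, the map to ${\rm T}_{x,X}$ (now of dimension $>n$) is no longer dominant, and passing to a $G$-equivariant resolution $\widetilde X\to X$ need not help: the fibre over $x$ is $G$-stable and proper, but a finite group can act on a proper, even rational, variety with no fixed point (for instance ${\rm Alt}_5$ on ${\bf P}^1$), so a fixed point need not survive on $\widetilde X$. To handle this I would instead act on the projectivized tangent cone of $X$ at $x$, on which $G$ acts linearly, and seek a $G$-fixed point there to blow up toward a smooth fixed point, or replace the target by $\overline{\varphi(X)}\subseteq{\rm T}_{x,X}$ and argue that the resulting $G$-equivariant dominant map is still a nontrivial base change onto a rational variety. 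Because incompressibility is a birational property of $G\subset{\rm Bir}(X)$, it suffices to produce a single $G$-model carrying such a nontrivial base change, which is why the smooth-fixed-point case is the crux.
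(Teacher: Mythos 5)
Your second paragraph is exactly the paper's argument: the Corollary carries no printed proof because it is the immediate contrapositive of Lemma \ref{based} and its proof --- a fixed point $x$ gives, via Lemma \ref{cov} and \cite[Lemma~10.3]{LPR}, a $G$-equivariant dominant rational map $\varphi\colon X\dashrightarrow {\rm T}_{x,X}\cong{\bf A}^{n}$ onto a rational variety with a faithful (Lemma \ref{linea}) linear $G$-action, and nonrationality of $X$ forces this base change to be nontrivial, contradicting incompressibility. Up to that point your route and the paper's coincide.

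Where you go astray is the third paragraph. The ``reduction to a smooth fixed point'' is not a missing step that a cleverer argument could supply: the statement is \emph{false} if the fixed point is allowed to be singular, so both of your proposed repairs (and any other) must fail. Indeed, take the paper's own example of an incompressible subgroup (Example 6 of \cite{Re2}): $G={\rm Sym}_5$ and $X$ a smooth projective curve of minimal genus admitting $G\subset{\rm Aut}(X)$. Pick a $G$-orbit $O$ with $|O|\geqslant 2$ and pinch $O$ to a single point, obtaining a projective curve $X'$ with normalization $X$; the action descends to $X'$ because the contraction is universal among morphisms constant on $O$, and it fixes the pinched point $p$. Then $X'$ is irreducible and nonrational (it is birational to $X$), the subgroup $G\subset{\rm Bir}(X')={\rm Bir}(X)$ is still incompressible (incompressibility is invariant under $G$-equivariant birational isomorphisms, and the normalization is one), yet $(X')^{G}\neq\varnothing$. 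Consequently the Corollary has to be read inside the standing hypotheses of the paragraph preceding Lemma \ref{based}, where $X$ is \emph{nonsingular}: smoothness is an assumption, not something to be reduced to, and for a general irreducible $X$ the correct conclusion is only that $X^{G}$ contains no smooth point. This also pinpoints why your repairs collapse: $\overline{\varphi(X)}\subseteq{\rm T}_{x,X}$ is merely some $G$-stable subvariety of an affine space --- nothing makes it rational, and nothing prevents $\varphi$ from being birational onto it (in the pinched-curve example it necessarily is), i.e.\ a trivial base change; and the projectivized tangent cone is a projective $G$-variety which may well be fixed-point-free, by the very ${\rm Alt}_5$-on-${\bf P}^1$ phenomenon you invoked against equivariant resolution.
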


\begin{question}
Which finite subgroups of ${\rm Cr}_2$ are incompressible?
\end{question}



 \end{document}